\renewcommand{\phi}{\varphi}
\tikzstyle{legend_general}=[rectangle, rounded corners, thin,
\DeclareMathOperator{\mc}{c}
\DeclareMathOperator{\id}{id}
\renewcommand{\epsilon}{\varepsilon}
\newtheorem{theorem}{Theorem}[section]
\newtheorem{observation}[theorem]{Observation}
\newtheorem{question}[theorem]{Question}
\newtheorem{claim}[theorem]{Claim}
\newtheorem{lemma}[theorem]{Lemma}
\newtheorem{corollary}[theorem]{Corollary}
\theoremstyle{definition}
\newtheorem{definition}[theorem]{Definition}
\title{Hamiltonicity of covering graphs of trees}
\author{Peter Bradshaw}
\address{Department of Mathematics, University of Illinois Urbana-Champaign, Urbana, IL, USA}
\email{pb38@illinois.edu}
\author{Zhilin Ge}
\address{Department of Mathematics, Simon Fraser University, Burnaby, BC, Canada}
\email{zge@sfu.ca}
\author{Ladislav Stacho}
\address{Department of Mathematics, Simon Fraser University, Burnaby, BC, Canada}
\email{ladislav\_stacho@sfu.ca}
\thanks{Peter Bradshaw received support from NSF RTG grant DMS-1937241 }
\begin{document}

\maketitle
\section*{Abstract}
%The Hamiltonicity of highly symmetric graphs has been a popular subject of research, with much focus on vertex-transitive graphs and in particular on Cayley graphs.
%In this paper, we focus on the highly symmetric class of covering graphs, specifically those obtained by lifting a tree, as a voltage graph, over a cyclic group. Batagelj and Pisanski \cite{Batagelj} were the first to provide sufficient conditions for the Hamiltonicity of such graphs, in particular the Cartesian product of a tree and a cycle.  Later, Hell, Nishiyama, and Stacho \cite{HellVoltage} proposed a generalization of the result. 

In this paper, we consider covering graphs obtained by lifting a tree with a loop at each vertex as a voltage graph over a cyclic group. We generalize a tool of Hell, Nishiyama, and Stacho \cite{HellVoltage}, known as the billiard strategy, for constructing Hamiltonian cycles in the covering graphs of paths. We show that our extended tool can be used to provide new sufficient conditions for the Hamiltonicity of covering graphs of trees that are similar to those  of Batagelj and Pisanski \cite{Batagelj} and of Hell, Nishiyama, and Stacho \cite{HellVoltage}.
Next, we focus specifically on covering graphs obtained from trees lifted as voltage graphs over cyclic groups $\mathbb Z_p$ of large prime order $p$. We prove that for a given reflexive tree $T$ 
whose edge labels are assigned uniformly at random from a finite set, the corresponding lift is almost surely Hamiltonian for a large enough prime-ordered cyclic group $\mathbb Z_p$. Finally, we show that if a reflexive tree $T$ is lifted over a group $\mathbb Z_p$ of a large prime order, then for any assignment of nonzero elements of $\mathbb Z_p$ to the edges of $T$, the corresponding cover of $T$ has a large circumference.

\section{Introduction}
Given a graph $H$, an \emph{automorphism} on $H$ is a bijective function $\phi:V(H) \rightarrow V(H)$ such that for each vertex pair $u,v \in V(H)$, $[u,v] \in E(H)$ if and only if $[\phi(u),\phi(v)] \in E(H)$. (We use $[u,v]$ to denote an edge and $(u,v)$ to denote a directed arc.) If, for every vertex pair $u,v \in V(H)$, there exists an automorphism on $H$ satisfying $\phi(u) = v$, then we say that $H$ is \emph{vertex-transitive}.
In 1969, Lov\'asz asked whether every connected vertex-transitive graph has a Hamiltonian path \cite{Lovasz1969combinatorial}, and since then, this question has led to considerable interest in questions about Hamiltonian paths and cycles in graphs with a high degree of symmetry. %, such as vertex-transitive graphs. 
While Lov\'asz's question has not yet been answered affirmatively, no vertex-transitive graph without a Hamiltonian path has yet been found, and there are only four known vertex-transitive graphs (on at least three vertices) with no Hamiltonian cycle \cite{kutnar2009hamilton}, including the Petersen graph.

One particular type of vertex-transitive graph is a \emph{Cayley graph}, which is defined as follows. Given a group $G$ and its generating set $S$, the Cayley graph $\Gamma$ generated by $G$ and $S$ is defined as a %vertex-transitive 
graph for which $V(\Gamma) = G$ and $E(\Gamma) = \{[v,vs] : v\in G \textrm{ and }s\in S\}$.
The simple fact that none of the four known vertex-transitive non-Hamiltonian graphs is a Cayley graph has led to a folklore conjecture that every Cayley graph is Hamiltonian.
This conjecture is mentioned, for example, by Kutnar and Maru\v{s}i\v{c}~\cite{kutnar2009hamilton}. 
Babai \cite{babai1979long} 
gave a partial answer to Lov\'asz's question by proving
that every connected vertex‐transitive graph on $n\geq 4$ vertices has a circumference greater than $\sqrt{3n}$. 
On the other hand, Babai in \cite{babai1979problem} 
conjectured that there exists a positive constant $\epsilon>0$ for which infinitely many connected vertex-transitive graphs $G$ have a circumference of at most $(1-\epsilon)|V(G)|$. 
Further research has focused on answering special cases of Lov\'asz's question about Hamiltonian paths. 
For example, Witte \cite{Witte} proved that every directed Cayley graph on a $p$-group has a directed Hamiltonian cycle, 
and Keating and Witte \cite{Keating} proved that a Cayley graph over a group $G$ is Hamiltonian whenever the commutator subgroup of $G$ is cyclic and of prime-power order.
In this paper, we focus on the Hamiltonicity problem in \emph{covering graphs} (also called \emph{lifts}) of \emph{voltage graphs}. 
Covering graphs are not vertex-transitive graphs in general, but they are still highly symmetric, and hence they share many properties with vertex-transitive graphs.

Informally, a \emph{covering graph} is a graph generated from a group $G$, a \emph{base graph} $\Gamma$, and an assignment
% {\LS Here we are using $\phi$ but later $\sigma$.} 
$\sigma:E(\Gamma) \rightarrow G$, by making one copy of each vertex of $\Gamma$ for each element of $G$ and then by adding edges according to certain rules based on the labels given by $\sigma$.
The term ``covering graph" comes from the fact that when a base graph $\Gamma$ is considered as a $1$-dimensional simplicial complex with the standard topology, a covering graph of $\Gamma$ gives a covering space for $\Gamma$. 
Covering graphs were originally introduced by Gross to describe graph embeddings on surfaces \cite{Gross}. Covering graphs have since gained wider attention and have been used, for instance, to generate graphs of large girth on a small number of vertices \cite{Exoo}, as well as counterexamples to a conjecture of Greenwell and Kronk about edge colorings and Hamiltonicity in cubic graphs \cite{Thomason}.

We formally define a covering graph as follows. Let $\Gamma$ be an undirected graph with possible loops. For each edge $[u,v] \in E(\Gamma)$, we say that $\Gamma$ has corresponding arcs $(u,v)$ and $(v,u)$, which correspond to the two opposite directions in which the edge $[u,v]$ can be traversed. 
Accordingly, we define the \emph{arc set} of $\Gamma$ as $A(\Gamma) = \{(u,v), (v,u): [u,v] \in E(\Gamma)\}.$ If $[v,v]$ is a loop of $\Gamma$, then we let $A(\Gamma)$ contain two elements $(v,v)$ corresponding to $[v,v]$. Then, given a group $G$, we say that a \emph{voltage assignment} on $\Gamma$ is a function $\sigma:A(\Gamma) \rightarrow G$ that satisfies $\sigma(u,v) = \sigma(v,u)^{-1}$ for every edge $[u,v] \in E(\Gamma)$, and such that $\sigma$ assigns inverse elements to each pair of loops $(v,v)$.
We often refer to the values assigned by $\sigma$ as \emph{labels}.
 For a voltage assignment $\sigma$, we say that the pair $(\Gamma, \sigma)$ is a \emph{voltage graph}. 
 Given a voltage graph $(\Gamma, \sigma)$, where $\sigma:A(\Gamma) \rightarrow G$ is a voltage assignment to a group $G$,
%with group operation $"+"$\footnote{Besides addition, other group operations are also possible. We only define it with operation $"+"$ because we focus on the case group $G$ is cyclic and use $\mathbb{Z}_n$ as the example cyclic group throughout the paper.}
we define the \emph{covering graph} of $(\Gamma,\sigma)$, written $\Gamma^{\sigma}$, as follows:

\begin{itemize}
    \item $V(\Gamma^{\sigma}) = V(\Gamma) \times G$.
    \item For any vertex pair $u,v \in V(\Gamma)$ and element pair $a,b \in G$, $(u,a)$ and $(v,b)$ are adjacent in $\Gamma^{\sigma}$ if and only if there exists an arc $e = (u,v) \in A(\Gamma)$ satisfying $\sigma(e) = a^{-1}b $.% or there exists an arc $e = vu \in E(\Gamma)$ with $\sigma(e) = a - b$.
\end{itemize}
We observe that every Cayley graph $H$ is a covering graph of a graph on a single vertex, with one loop for each generator of $H$.

Whenever we have a voltage graph $(\Gamma,\sigma)$ containing
a loop $[v,v]$ in its edge set, $\Gamma$ then contains 
two directed loops $(v,v)$ in its arc set. We say that one of these loops is \emph{primary} and the other is \emph{secondary}. When we write $\sigma(v,v)$, we refer to the voltage assigned to the primary loop $(v,v)$. This removes ambiguity from the notation $\sigma(v,v)$. Also, we often abbreviate $\sigma(v,v)$ as $\sigma(v)$.
%
%For each vertex $v \in V(\Gamma)$ and element $a \in G$, there is a vertex $(v,a)$ in $\Gamma^{\sigma}$.
%, and we will often write $v_a$ instead of $(v,a)$. 
For a vertex $v \in V(\Gamma)$, we write $v^{\sigma}$ for the subgraph of $\Gamma^{\sigma}$ induced by the vertex set $\{(v,g) : g \in G\}$, 
and we say that $v^{\sigma}$ is the \emph{fiber} over $v$. Note that if $v$ has no loop in ${\Gamma}$, then $v^{\sigma}$ is an independent set of $|G|$ vertices.

In all voltage graphs that we consider, $G$ is  a cyclic group with a binary operator $+$. For each element $a \in G$, we
use $-a$ to denote the group inverse of $a$. We use $\mathbb{Z}_n$ to represent the cyclic additive group $\mathbb{Z}/n\mathbb{Z}$ of $n$ elements, and we use $\mathbb{Z}_p$ to represent the cyclic additive group $\mathbb{Z}/p\mathbb{Z}$ of $p$ elements, where $p$ is some prime number.

We observe that for a graph $\Gamma$ with a voltage assignment $\sigma: A(\Gamma) \rightarrow G$ for some cyclic group $G$, the covering graph $\Gamma^{\sigma}$ has a large number of automorphisms. 
In particular, for each element $a \in G$, the function $f_a:V(\Gamma^{\sigma}) \rightarrow V(\Gamma^{\sigma})$ mapping $(v,g) \mapsto (v,g+a)$ 
is a graph automorphism. Hence, for each subgraph of $H \subseteq \Gamma^{\sigma}$, the automorphism $f_a$ transforms $H$ into a graph isomorphic to $H$.

We say that a graph $\Gamma$ is \emph{reflexive} if there exists exactly one loop at every vertex of $\Gamma$. (Our definition of a reflexive graph is slightly stronger than the traditional definition of a reflexive graph, which requires \emph{at least} one loop at every vertex of the graph.)
 In \cite{Batagelj}, Batagelj and Pisanski ask when the Cartesian product of a tree $T$ and a cycle $C_n$ contains a Hamiltonian cycle. The graph $T \times C_n$ can be seen as the lift of a reflexive tree $\Gamma$, isomorphic to $T$ (after deleting all loops from $\Gamma$), with a voltage assignment $\sigma: A(\Gamma) \rightarrow \mathbb{Z}_n$, where each arc $a$ of $\Gamma$ corresponding to a cut-edge is assigned $\sigma(a) = 0$, and each loop pair $\ell, \ell'$ of $\Gamma$ is assigned $\sigma(\ell) = 1$ and $\sigma(\ell') = -1$.
 With $\sigma$ defined this way, the authors give a necessary and sufficient condition for the lift $\Gamma^{\sigma}$ to be Hamiltonian. Later, Hell et al.~\cite{HellVoltage} considered lifts of reflexive trees $\Gamma$ with more general voltage assignments $\sigma$ on $\mathbb Z_n$. They allowed every loop of $\Gamma$ be assigned a value coprime to $n$, and gave a necessary and sufficient conditions for the lift $\Gamma^{\sigma}$ to be Hamiltonian. 
 
 In this paper, we aim to develop new tools for studying the covering graphs of reflexive trees and to find further conditions under which the lift of a reflexive tree with a voltage assignment on a cyclic group
is Hamiltonian. 
Throughout the paper, we consider the following question: 
\begin{quote}
    Given a reflexive tree $\Gamma$, a cyclic group $G$, and a voltage assignment $\sigma:A(\Gamma) \rightarrow G$, under what conditions does the covering graph $\Gamma^{\sigma}$
    contain a Hamiltonian cycle?
\end{quote}

The results of the paper are organized as follows. In Section \ref{sec:tools}, we
extend a method of Hell, Nishiyama, and Stacho \cite{HellVoltage} in order to develop new sufficient conditions
for the Hamiltonicity of the lift of a tree based on a decomposition of the tree into paths similar to the one in \cite{HellVoltage}. 
In Section \ref{sec:large}, we consider a reflexive tree lifted over a cyclic group $\mathbb Z_p$ of prime order, and we find a more relaxed sufficient condition for the Hamiltonicity of such a covering graph. 
In particular, we prove that for a given reflexive tree $T$ 
whose edge labels are assigned uniformly at random from a finite set,
the corresponding lift is almost surely Hamiltonian for a large enough prime-ordered cyclic group $\mathbb Z_p$.
Finally, in Section \ref{sec:circumference}, we show that if the group $\mathbb Z_p$ is of a large enough prime order, then for any assignment of nonzero 
elements in $\mathbb Z_p$ to the edges of $\Gamma$, the lift $\Gamma^{\sigma}$ has a large circumference.

\section{Extending the billiard strategy}
\label{sec:tools}
In this section, we generalize a tool of Hell, Nishiyama, and Stacho \cite{HellVoltage}, known as the \emph{billiard strategy}, which gives a sufficient condition for when the covering graph of a voltage path is Hamiltonian. Our generalized tool can be used to give many new sufficient conditions for the Hamiltonicity of lifts of voltage graphs.
We give one such new sufficient condition for when the covering graph of a reflexive path is Hamiltonian, and we give a second  sufficient condition for Hamiltonicity based on path decomposition, which is of a similar flavor to results from \cite{Batagelj} and \cite{HellVoltage}.

%In this section, we first investigate under what conditions the covering graphs of reflexive paths are Hamiltonian.
%We also show that reflexive trees that can be decomposed into paths satisfying appropriate conditions have Hamiltonian covering graphs. 
%Our results are of similar flavour to those in \cite{Batagelj} and \cite{HellVoltage}.

Throughout this section, we use the following lemma, which was proved in \cite{HellVoltage} and allows us to make some general assumptions on certain labels in voltage graphs of reflexive trees.

\begin{lemma}
\label{lemZero}
Let $\Gamma$ be a graph, and let $(u,v) \in A(\Gamma)$ be an arc corresponding to a cut-edge $[u,v] \in E(\Gamma)$. Let $G$ be a group, and for a pair $g,h \in G$, let $\sigma_g:A(\Gamma) \rightarrow G$ and $\sigma_h:A(\Gamma) \rightarrow G$ be voltage assignments. Suppose that $\sigma_g$ and $\sigma_h$ satisfy the following properties:
\begin{itemize}
    \item $\sigma_g(u,v) = g$, and $\sigma_h(u,v) = h$;
    \item For each arc $e \in A(\Gamma)$ satisfying $e \not \in \{(u,v), (v,u)\}$, $\sigma_g(e) = \sigma_h(e)$.
\end{itemize}
Then $\Gamma^{\sigma_g} \cong \Gamma^{\sigma_h}$.
\end{lemma}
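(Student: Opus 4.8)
The plan is to exploit the hypothesis that $[u,v]$ is a cut-edge. Deleting $[u,v]$ from $\Gamma$ splits it into two components; write $\Gamma_u$ for the one containing $u$ and $\Gamma_v$ for the one containing $v$. The crucial observation is that within $\Gamma^{\sigma_g}$, the only edges joining the fibers over $V(\Gamma_u)$ to the fibers over $V(\Gamma_v)$ are the lifts of the cut-arc $(u,v)$: every other edge of $\Gamma^{\sigma_g}$ lies over an arc of $\Gamma_u$ or an arc of $\Gamma_v$, and since $\sigma_g$ and $\sigma_h$ agree on all such arcs, the subgraphs of $\Gamma^{\sigma_g}$ and $\Gamma^{\sigma_h}$ induced by the fibers over $\Gamma_u$ (respectively over $\Gamma_v$) are literally identical. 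Thus the two covering graphs differ only in how the two pieces are stitched together across the cut.

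First I would define a candidate isomorphism $\Phi:\Gamma^{\sigma_g}\to\Gamma^{\sigma_h}$ that fixes the $\Gamma_u$-side and translates the $\Gamma_v$-side: set $\Phi(w,x)=(w,x)$ for $w\in V(\Gamma_u)$ and $\Phi(w,x)=(w,x+(h-g))$ for $w\in V(\Gamma_v)$. On each side this is one of the fiber-translation automorphisms $f_a$ recorded above (with $a=0$ on the $\Gamma_u$-side and $a=h-g$ on the $\Gamma_v$-side), so it is a bijection on vertices; I would confirm it is well defined because the $\Gamma_u$- and $\Gamma_v$-fibers partition $V(\Gamma^{\sigma_g})$.

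Next I would check edge preservation case by case. An edge lying over an arc of $\Gamma_u$ is untouched, and an edge lying over an arc of $\Gamma_v$ is preserved because translating both endpoints by the same element $h-g$ leaves the difference of their $G$-coordinates unchanged, and $\sigma_g=\sigma_h$ on these arcs. The only remaining edges are the lifts of the cut-arc: a pair $(u,x),(v,y)$ is adjacent in $\Gamma^{\sigma_g}$ exactly when $y-x=g$, and applying $\Phi$ gives the pair $(u,x),(v,y+(h-g))$, whose coordinate difference is $(y-x)+(h-g)=h$; hence this pair is adjacent in $\Gamma^{\sigma_h}$, and conversely. Running the same computation for $\Phi^{-1}$ (translating the $\Gamma_v$-side by $g-h$) shows $\Phi$ is an isomorphism.

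The step I expect to require the most care is the first structural claim: that no edge other than a lift of the cut-arc crosses between the two sides. This is exactly where the cut-edge hypothesis is used — if $[u,v]$ were not a cut-edge, some other arc would join $\Gamma_u$ to $\Gamma_v$, and the translation on the $\Gamma_v$-side would generally destroy that edge. Once this is established, the verification is the routine difference computation above, which goes through for any abelian $G$ (in particular the cyclic groups considered throughout), since translating by a fixed group element preserves coordinate differences.
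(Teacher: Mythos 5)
The paper itself gives no proof of Lemma \ref{lemZero} --- it is quoted as proved in Hell, Nishiyama, and Stacho \cite{HellVoltage} --- so there is no internal argument to compare line by line; but your proof is the standard one and is correct in the setting the paper actually uses. You correctly isolate the structural crux (since $[u,v]$ is a cut-edge, the only edges of the cover joining the fibers over $\Gamma_u$ to those over $\Gamma_v$ are the lifts of $[u,v]$; loops at $u$ and $v$ lie inside their own sides), your map is well defined because the two families of fibers partition $V(\Gamma^{\sigma_g})$, and the three-case edge check together with the symmetric computation for $\Phi^{-1}$ is complete for abelian $G$.

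One substantive remark: the caveat in your last paragraph (``goes through for any abelian $G$'') is doing more work than you may realize, because the lemma as literally stated --- for an arbitrary group $G$ --- is false, so no repair of the translation map to the nonabelian case is possible. In multiplicative notation, translating the $\Gamma_v$-side coordinates on the right by $c$ sends a pair over a $\Gamma_v$-arc of voltage $s$, with $x_1^{-1}x_2 = s$, to a pair with difference $(x_1c)^{-1}(x_2c) = c^{-1}sc$; so your $\Phi$ (with $c = g^{-1}h$) is in general an isomorphism onto the cover in which every $\Gamma_v$-side voltage has been conjugated by $g^{-1}h$, and this collapses to the claimed statement exactly when $g^{-1}h$ is central (e.g.\ $G$ abelian). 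Translating on the left instead preserves the $\Gamma_v$-side edges but breaks the cut-edge lifts in an $x$-dependent way. For a concrete failure, take $G = S_3$ and $\Gamma$ the path $u$--$v$ with loops at both vertices, loop voltages $(1\,2)$ at $u$ and at $v$, and compare cut-edge voltage $g = \mathrm{id}$ with $h = (2\,3)$: the first cover is a disjoint union of three $4$-cycles, while the second is connected, since the closed-walk voltages generate $\langle (1\,2),\, (2\,3)(1\,2)(2\,3) \rangle = \langle (1\,2), (1\,3)\rangle = S_3$. Since every voltage group in this paper is cyclic, none of its applications are affected, but the hypothesis ``let $G$ be a group'' in the statement must be read as abelian --- exactly the restriction your verification requires.
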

In particular, Lemma \ref{lemZero} tells us that whenever we consider a reflexive tree $\Gamma$ with a voltage assignment $\sigma$, we may assume that $\sigma(e) = 0$ for each cut-edge $e \in A(\Gamma)$.
This assumption makes our analysis considerably simpler.

The billiard strategy from \cite{HellVoltage} is a tool for finding Hamiltonian cycles in the covering graphs of reflexive paths. We extend this technique into a more general method in the following lemma. 
Roughly speaking, given a covering graph of a reflexive path $\Gamma$ over a cyclic group, the following lemma guarantees the existence of a family $\mathcal P$ of paths in the lift of $\Gamma$ such that the paths in $\mathcal P$ include all vertices in the fibers over internal vertices of $\Gamma$, and such that the endpoints of paths in $\mathcal P$ appear 
%consecutively {\PB rethink}
%\footnote{Given the lift $\Gamma^{\sigma}$ of a voltage graphs $(\Gamma, \sigma)$ over a cyclic group $\mathbb Z_n$, two vertices $(v,a), (v,a') \in V(\Gamma^{\sigma})$ are \textit{consecutive} if $a-a' \in \{1,-1\}$. Note that the \textit{consecutive edges} will be defined differently from consecutive vertices later in the paper.} 
in the fiber over each endpoint of $\Gamma$ at voltages forming an arithmetic progression in $\mathbb Z_n$. The original billiard strategy in \cite{HellVoltage} can be obtained from our following lemma by requiring that every voltage assignment on the path $\Gamma$ be coprime to our group size $n$ and then setting $|\mathcal P| = 2$.

For a path $P$ with endpoints $u$ and $v$, 
we often 
give a direction to $P$ and
say that $P$ \emph{begins} at $u$ and \emph{ends} at $v$, 
or that $P$ begins at $v$ and ends at $u$. 
For a path $P$ that begins at $u$ and ends at $v$, if $S$ is a vertex set satisfying $S \cap V(P) \neq \emptyset$, 
then we say that $P$ \emph{arrives} in $S$ at $w$ if $w \in V(P) \cap S$ and $w$ is at a minimum distance from $u$ along $P$ among all vertices in $V(P) \cap S$.

\begin{lemma}
\label{lemma:billiardPaths}
For an integer $m \geq 1$, let $\Gamma = (v_1, \dots, v_m)$ be a reflexive path, and let $\sigma:A(\Gamma) \rightarrow \mathbb{Z}_n$ be a voltage assignment. 
Let $l, r \in [0 ,n-1]$ and $d \in [1,n/\gcd(r,n)]$ be constants. 
Then, there exists a family of $d$ vertex-disjoint paths $\mathcal P = \{P_0, \dots, P_{d-1}\}$ in $\Gamma^{\sigma}$ satisfying the following properties:
\begin{itemize}
    \item
    \label{item:first_bullet}
    The paths $P_0, \dots, P_{d-1}$ begin 
    at the vertices $(v_1, l), (v_1, l+r), \dots, (v_1, l+(d-1)r)$, respectively.
    \item For each $2 \leq t \leq m$, the paths of $\mathcal P$ arrive in the fiber $v_t^{\sigma}$ at a set of $d$ vertices $\{(v_t, i_t), (v_t, i_t+r), \dots, (v_t, i_t+(d-1)r)\}$, for some value $i_t \in \mathbb{Z}_n$, where addition is calculated in $\mathbb Z_n$.
    \item For each $2 \leq t \leq m-1$, if a path $P\in \mathcal P$ visits a component $K$ of a fiber $v_t^\sigma$, then every vertex of $K$ is visited by a path from $\mathcal P $.
    \item For each $1 \leq t \leq m-1$, after a path $P\in \mathcal P $ leaves a fiber $v_t^\sigma$, $P$ never returns to $v_t^\sigma$.
\end{itemize}
\end{lemma}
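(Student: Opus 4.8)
The plan is to first normalize the voltage assignment and then to build the family $\mathcal P$ by sweeping across the fibers from $v_1$ to $v_m$, performing one ``billiard bounce'' at each fiber. By Lemma~\ref{lemZero} I may assume that $\sigma(e)=0$ for every cut-edge $e$, so that $\Gamma^\sigma$ acquires a transparent structure: writing $s_t=\sigma(v_t)$ for the loop voltage at $v_t$, the fiber $v_t^\sigma$ is the disjoint union of the cosets of $\langle s_t\rangle$ in $\mathbb Z_n$, each spanning a cycle whose edges join $(v_t,g)$ to $(v_t,g\pm s_t)$, while consecutive fibers $v_t^\sigma$ and $v_{t+1}^\sigma$ are joined by the identity matching $(v_t,g)\sim(v_{t+1},g)$. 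Since the only edges leaving a fiber go to the two neighbouring fibers, the fourth (no-return) condition together with the path structure of $\Gamma$ forces every path of $\mathcal P$ to be \emph{monotone}: it meets the fibers $v_1^\sigma,v_2^\sigma,\dots$ in increasing order, and its intersection with a single fiber is one contiguous subpath lying inside one cycle-component of that fiber. I will maintain the invariant that the paths arrive in $v_t^\sigma$ at the arithmetic progression $\{i_t+jr:0\le j\le d-1\}$, starting from $i_1=l$.

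The engine of the construction is the bounce at a fiber $v_t$. Suppose the $d$ paths have arrived at the distinct entry points $(v_t,i_t+jr)$. I route each path forward along the cycle-component containing its entry point, repeatedly adding $s_t$, until it reaches the vertex immediately preceding the next entry point lying in the same component (taken cyclically within that component), and then let it cross the matching edge into $v_{t+1}^\sigma$. I claim this simultaneously (i) covers every visited component of $v_t^\sigma$ in full, (ii) keeps the paths pairwise disjoint, and (iii) delivers the paths to $v_{t+1}^\sigma$ at a new arithmetic progression with the same common difference $r$ and base $i_{t+1}=i_t-s_t$. Claim (i) holds because the entry points lying in a given component cut that component's cycle into arcs, and ``forward until just before the next entry'' makes these arcs tile the whole cycle (a single entry in a component forces the path all the way around it); claim (ii) is then immediate from the tiling and from monotonicity.

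The heart of the matter is claim (iii), and reconciling it with the full-coverage requirement (i) is the main obstacle. Full coverage forces each path to traverse essentially all of its cycle, so a path's exit point is the predecessor of the \emph{next} entry in its component; that is, its exit voltage equals $\operatorname{succ}(i_t+jr)-s_t$, where $\operatorname{succ}$ sends each entry to the next entry cyclically within its own component. The key observation is that $\operatorname{succ}$ is a \emph{bijection} of the set of entry voltages onto itself, being a disjoint union of cyclic permutations, one per component. Hence the set of exit voltages is exactly $\{\,(i_t+jr)-s_t:0\le j\le d-1\,\}=\{(i_t-s_t)+jr\}$, again an arithmetic progression of difference $r$; it is precisely this identity that lets the ``wrap-around'' arc of each component knit the separately-handled components back into one progression. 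For this to make sense the entry voltages must be pairwise distinct, which holds at $v_1$ under the stated bounds on $d$ and $r$ (the points $l,l+r,\dots,l+(d-1)r$ lie in a single coset of $\langle r\rangle$ and are fewer in number than the order of $r$ in $\mathbb Z_n$), and distinctness then propagates because each bounce merely translates the whole progression by $-s_t$.

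It remains to treat the two endpoint fibers and assemble. At $v_1$ the paths must \emph{begin} at $(v_1,l+jr)$; since $v_1$ is not internal, the coverage condition does not constrain it, so I may let each path cross immediately into $v_2^\sigma$ (taking $i_2=l$) or bounce as above. At $v_m$ each path simply arrives at $(v_m,i_m+jr)$ and stops, which is all that the arrival condition asks there. Applying the bounce at every internal fiber $v_2,\dots,v_{m-1}$ and recording $i_{t+1}=i_t-s_t$ then produces the whole family $\mathcal P$: the first, arrival, and no-return conditions are visible from the construction, and the coverage condition is exactly claim (i). One may equivalently phrase the sweep as an induction on $m$, extending a family for $(v_1,\dots,v_{m-1})$ by a single additional bounce. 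The one delicate point throughout is the bijectivity-driven preservation of the arithmetic progression by the bounce, and in particular verifying that it survives the splitting of the entry set across several cycle-components of a fiber.
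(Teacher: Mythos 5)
Your proposal is correct and is essentially the paper's own proof: your fiber-by-fiber ``bounce'' sweep is exactly the paper's induction on $m$ (a fact you note yourself), with the same invariant that the arrival voltages form an arithmetic progression of difference $r$ that gets translated by $-\sigma(v_t)$ at each fiber, i.e.\ $i_{t+1}=i_t-\sigma(v_t)$. Your successor-bijection justification for why the progression is preserved is just a more explicit phrasing of the paper's step that the $d$ new endpoints are pairwise distinct and each of the form $(v_t,\,i_t+jr-\sigma(v_t))$, hence fill out the whole translated progression, so the two arguments coincide.
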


\begin{proof}
By Lemma \ref{lemZero}, we may assume that $\sigma(e) = 0$ for each arc $e \in A(\Gamma)$ that is not a loop. By applying an appropriate automorphism to $\Gamma^{\sigma}$, we may also assume without loss of generality that $l=0$. 

We induct on $m$, the number of vertices of $\Gamma$.
When $m=1$, for $j \in \{0, \dots, d-1\}$, we let $P_j$ be a path of length $0$ containing the single vertex $(v_1, jr)$. Since $d \leq n/\gcd(r,n)$, our paths $P_j$ are all distinct. Therefore,
the first statement of the lemma holds, and the other three statements hold vacuously.

Now, suppose $m\geq 2$. We construct our family of paths as follows.
By the induction hypothesis, there exists a vertex-disjoint family $\mathcal P$ of paths
$P_0, \dots, P_{d-1}$
beginning at $(v_1, 0), (v_1, r), \dots, (v_1, (d-1)r)$, respectively, 
ending at the vertex set $\{(v_{m-1}, i_{m-1}), (v_{m-1}, i_{m-1}+r), \dots, (v_{m-1}, i_{m-1}+(d-1)r)\}$ for some $i_{m-1} \in \mathbb Z_n$, and satisfying the last three conditions of the lemma after replacing $m$ with $m-1$.

Let $a = \sigma(v_{m-1})$ be the voltage of the primary loop at $v_{m-1}$. Consider a path $P \in \mathcal P$
which arrives at the fiber $v_{m-1}^{\sigma}$ at the vertex $(v_{m-1}, i_{m-1}+jr) \in V(v_{m-1}^{\sigma})$  for some $j \in \{0,\dots,d-1\}$.
If $a \neq 0$, then we perform the following steps.
We extend $P$ by adding the vertices 
\[(v_{m-1}, i_{m-1}+jr+a), (v_{m-1}, i_{m-1}+jr+2a), (v_{m-1}, i_{m-1}+jr+3a), \dots\] 
until we reach a vertex $(v_{m-1}, i_{m-1}+jr+ sa)$ such that $(v_{m-1}, i_{m-1}+jr+ (s+1)a)$ already belongs to a (not necessarily distinct) path $P' \in \mathcal P$. 
This extension is depicted in Figure \ref{fig:billiard}.
As $(v_{m-1}, i_{m-1}+jr + (s+1)a)$ is the first vertex encountered by $P$ that already belongs to a path $P' \in \mathcal P$, it follows from the way that we have extended $P$ that $(v_{m-1}, i_{m-1}+jr+(s+1)a)$ is the vertex at which $P'$ arrived at the fiber $v_{m-1}^{\sigma}$.
%; that is, $jr+(s+1)a = i_k$.
At this point, we stop adding vertices from $v_{m-1}^{\sigma}$ to $P$, with $(v_{m-1}, i_{m-1}+jr + sa)$ being the last vertex from $v_{m-1}^{\sigma}$ added.

We claim that after applying this technique at $v_{m-1}^{\sigma}$, the endpoints of the paths $P_0, \dots, P_{d-1}$ form 
 the set 
 \[S_{m-1}:=\{(v_{m-1}, i_{m-1}-a), (v_{m-1}, i_{m-1}-a+r), \dots, (v_{m-1}, i_{m-1}-a+(d-1)r)\}.\] (Note that we do not make any claims about the order in which these vertices appear as the endpoints of paths $P_0, \dots, P_{k-1}$.) 
 Indeed, if $a = 0$, then this claim clearly holds. 
 If $a\neq 0$, then we recall that
  the paths $P_0, \dots, P_{d-1}$ arrive at $v_{m-1}^{\sigma}$ at vertices of the set 
 $$\{(v_{m-1}, i_{m-1}), (v_{m-1}, i_{m-1}+r), \dots, (v_{m-1}, i_{m-1}+(d-1)r)\}.$$
 Then,
 in the process of extending our paths, each path $P \in \mathcal P$ is extended by adding vertices of the fiber $v_{m-1}^{\sigma}$ to $P$ until $P$ reaches a vertex $(v_{m-1}, i_{m-1} + tr - a)$, 
 where $t \in \{0, \dots, d-1\}$.
 Furthermore, after extending each path $P \in \mathcal P$, the endpoints of the paths $P_0, \dots, P_{d-1}$ are still distinct. Therefore, 
 it follows that after extending each path $P \in \mathcal P$, the endpoints of $P_0, \dots, P_{d-1}$ make up the set $S_{m-1}$. Thus, the claim holds.
 
 Finally, for each path $P \in \mathcal P$, we
 write $(v_{m-1}, i_{m-1} +jr - a)$ for the endpoint of $P$ in $v_{m-1}^{\sigma}$, and we add an edge $[(v_{m-1}, i_{m-1} +jr - a), (v_{m}, i_{m-1}+jr - a)]$ to extend $P$ to $v_m^\sigma$. This completes our construction of paths $P_0, \dots, P_{m-1}$. Observe that our family of paths is still vertex-disjoint.
 
 We check that the four properties of the lemma hold. The first property holds by the induction hypothesis. The second property holds for $2 \leq t \leq m-1$ by the induction hypothesis and holds for $t = m$ by the construction. The third property holds for $2 \leq t \leq m-2$, by the induction hypothesis. The statement also holds for $t = m-1$, as each component is a cycle and each path $P \in \mathcal P$ does not exit a component of $v_{m-1}^{\sigma}$ until $P \in \mathcal P$ cannot visit any more vertices in that component. The fourth statement also holds by the induction hypothesis and by construction. 
 Thus, induction is complete, and the theorem is proven.
\end{proof}
%%Figure-1%%
\begin{figure}
    \centering
    \includegraphics[scale=0.3333333]{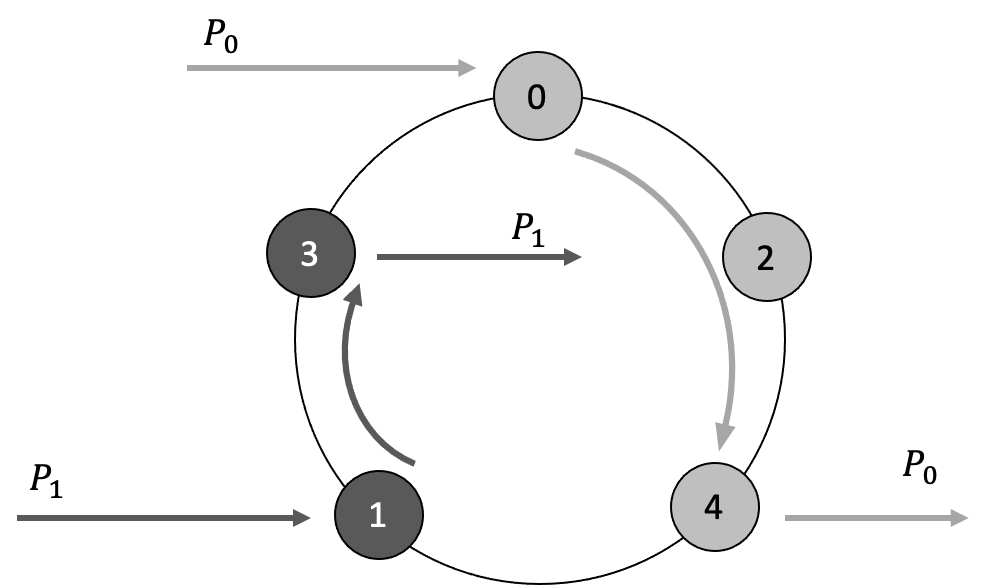}
    \caption{The figure shows an example of the path extension in Lemma \ref{lemma:billiardPaths}. The underlying group is $\mathbb{Z}_5$. The cycle is a fiber over some vertex $v_i$ with the voltage assignment $\sigma(v_i)=2$. We have depicted two paths $P_0$ and $P_1$. The path $P_0$ arrives at the fiber at 0, and $P_1$ arrives at the fiber at 1. We then extend $P_0$ to $0+2=2$ (mod $5$), and then to $2+2=4$ (mod $5$), and then stop because $4+2=1$ (mod $5$), and this vertex is already visited by $P_1$. }
    \label{fig:billiard}
\end{figure}

We use the name \emph{extended billiard strategy}
to refer to the method used in Lemma \ref{lemma:billiardPaths}
to generate our family $\mathcal P$ of paths.
 Lemma \ref{lemma:billiardPaths} tells us that given a path $\Gamma$, a voltage assignment $\sigma$, and a value $d$ as outlined in the lemma statement, 
 if we follow the extended billiard strategy as outlined to produce paths $P_0, \dots, P_{d-1}$, then for each value $2 \leq t \leq m-1$, 
 the paths arrive at the fiber $v_t^{\sigma}$ at a set of $d$ vertices $\{(v_t, i_t), (v_t, i_t+r), \dots, (v_t, i_t+(d-1)r)\}$, 
 for some value $i_t \in \mathbb{Z}_n$, where addition is calculated modulo $n$. 
 Furthermore, by following the proof of Lemma \ref{lemma:billiardPaths}, we see that this value $i_t$ is in fact $-(\sigma(v_2) + \sigma(v_3) + \dots + \sigma(v_{t-1}))$. 
 Furthermore, after applying our method at $v_t^{\sigma}$ so that the paths in $\mathcal P$ contain all vertices of $v_t^{\sigma}$, we see that the endpoints of the paths occupy the vertex set $\{(v_t, \alpha_t), (v_t, \alpha_t+r), \dots, (v_t, \alpha_t+(d-1)r)\}$, where $\alpha_t = i_t - \sigma(v_t,v_t)$. 
 Using this fact, we define the \emph{order} of the paths $P_0, \dots, P_{d-1}$
 at $v_t^{\sigma}$ as follows. 
 After applying our method at $v_t^{\sigma}$, the paths with endpoints at $(v_t, \alpha_t), (v_t, \alpha_{t}+r), \dots, (v_t, \alpha_t + (d - 1)r)$, respectively, are  $P_{a_1}, \dots, P_{a_d}$, where $(a_1, \dots, a_d)$ is some permutation of the set $\{0, \dots, d-1\}$. 
 We write $\pi(v_t) = (a_1, \dots, a_d)$, and we say that the permutation $\pi(v_t)$ gives the \emph{order} of the paths $P_0, \dots, P_{d-1}$ at $v_t$. Note that $\pi(v_t)$ depends on $t$, $d$, $n$, $\sigma$, and $r$.
It is convenient to define $\pi(v_1) = \id$. 
% We also define 
% \[ \varphi_t = \pi_t \circ \pi_{t-1}^{-1} \] 
% to be the individual permutation applied to the paths $P_0, \dots, P_{k-1}$ by the single fiber $v_t^{\sigma}$. In this way, for each $2 \leq t \leq m$, $\pi_t = \varphi_t \circ \dots \circ \varphi_2$.

Throughout this section, we use the following invariant. 

\begin{definition}
Let $\Gamma = (v_1, \dots, v_m)$ be a reflexive path with a voltage assignment $\sigma : A(\Gamma)\to \mathbb{Z}_n $. When $m \geq 3$, we define
$$\mc(\Gamma) =\left \lceil \frac{1}{2} \max\{\gcd(n,\sigma(v_t,v_t)): 2 \leq t \leq m-1\} \right \rceil.$$
When $m = 2$, we say $c(\Gamma) = 1$.
\end{definition}

For a reflexive path $\Gamma$, the quantity $\mc(\Gamma)$ is approximately half the maximum number of components that appear in the fiber of a single internal vertex of $\Gamma$.
The following corollary gives a simple condition for when a system of paths constructed in Lemma \ref{lemma:billiardPaths} includes all vertices in fibers over the internal vertices of $\Gamma$.
\begin{corollary}
\label{corInclude}
For an integer $m \geq 2$, let $\Gamma = (v_1, \dots, v_m)$ be a reflexive path, and let $\sigma:A(\Gamma) \rightarrow \mathbb{Z}_n$ be a voltage assignment. Let $\mathcal P = \{P_0, \dots, P_{d-1}\}$ be a family of paths on $\Gamma^{\sigma}$ constructed according to Lemma \ref{lemma:billiardPaths} with a value $r$ coprime to $n$.
If $ 2\mc (\Gamma) \leq d\leq n$, then the paths of $\mathcal P $ visit all vertices in each fiber $v_i^{\sigma}$, for $2 \leq i \leq m-1$.
\end{corollary}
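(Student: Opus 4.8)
The plan is to fix one internal fiber $v_t^{\sigma}$ with $2 \le t \le m-1$ and show that the extended billiard strategy sweeps out every component of it; the case $m=2$ is vacuous since there are no internal vertices. First I would record the structure of the fiber. Writing $a = \sigma(v_t,v_t)$ and $g_t = \gcd(n,a)$, the loop at $v_t$ makes $v_t^{\sigma}$ a disjoint union of $g_t$ cycles, each of which is a coset of the cyclic subgroup $\langle a\rangle \le \mathbb{Z}_n$. Two vertices $(v_t,x)$ and $(v_t,y)$ lie in the same component precisely when $x \equiv y \pmod{g_t}$, so the components are indexed by the residues modulo $g_t$.

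The key step is to observe that the billiard extension fully covers any component that the paths enter at all. By Lemma \ref{lemma:billiardPaths} the paths of $\mathcal{P}$ arrive in $v_t^{\sigma}$ at the $d$ vertices $(v_t, i_t + jr)$ for $0 \le j \le d-1$. Inside a fixed component $C$, the arrival points of the paths landing in $C$ cut the cycle into arcs; by the stopping rule defining the extension (each path advances by $a$ until the next vertex is already an arrival point of some path), each such path traverses exactly the arc from its own arrival point up to the vertex preceding the next arrival point. These arcs tile $C$, so as soon as $C$ contains at least one arrival point, every vertex of $C$ is visited. Hence the covering question reduces to a counting statement: every component must receive an arrival point.

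For the counting step I would use the hypothesis that $r$ is coprime to $n$. Since $g_t \mid n$, this forces $\gcd(r,g_t)=1$, so $r$ is invertible modulo $g_t$; consequently the residues $(i_t + jr) \bmod g_t$ for $j = 0, \dots, d-1$ run through distinct classes and exhaust $\mathbb{Z}_{g_t}$ exactly when $d \ge g_t$. It then remains to verify $d \ge g_t$ for every internal $t$, and here the definition of $\mc(\Gamma)$ does the work: $2\mc(\Gamma) = 2\lceil \tfrac12 \max_t g_t\rceil \ge \max_t g_t \ge g_t$, so the hypothesis $d \ge 2\mc(\Gamma)$ indeed guarantees $d \ge g_t$. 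Combining the two steps, every component of every internal fiber contains an arrival point and is therefore visited completely.

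The only genuinely delicate point is the claim in the middle paragraph that a component containing an arrival point is swept entirely; everything else is the structure of $\mathbb{Z}_n$ under translation by $a$ together with an elementary coprimality argument. I expect that verifying the arc-tiling carefully — in particular that a component with a single arrival point is covered in full and that no path exits its component early — will be the main thing to get right, though it follows directly from the stopping rule established in the proof of Lemma \ref{lemma:billiardPaths}.
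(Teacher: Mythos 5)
Your proposal is correct and follows essentially the same route as the paper: identify the components of $v_t^{\sigma}$ with the cosets of $\langle \sigma(v_t)\rangle$, use $\gcd(r,n)=1$ together with $d \geq 2\mc(\Gamma) \geq \gcd(n,\sigma(v_t))$ to show the arrival points meet every component, and conclude that each visited component is fully swept. The one simplification available to you is that your ``delicate'' middle step --- a component containing an arrival point is covered entirely --- is precisely the third property stated in Lemma \ref{lemma:billiardPaths}, so it can be cited directly rather than re-derived from the stopping rule, which is exactly what the paper does.
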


\begin{proof}
Consider the fiber $v_t^{\sigma}$
for a value $2 \leq t \leq m-1$. 
This fiber contains a component $C_a$ for each coset $a + \langle \sigma(v_t) \rangle$ of $\mathbb{Z}_n$, and the number of such cosets is $\gcd(n, \sigma(v_t))$. 
By Lemma \ref{lemma:billiardPaths}, the paths $P_0, \dots, P_{d-1}$ arrive at $v^{\sigma}_t$ at a vertex set of the form 
$\{(v_i, t), (v_i, t+r), \dots, (v_i, t+r(d-1))\}$, so as $(r,n) = 1$, by the assumption $d \geq 2 \mc ( \Gamma ) \geq \gcd(n, \sigma(v_t))$, the vertices of the paths $P_0, \dots, P_{d-1}$ meet every 
component of $v_t^{\sigma}$.
Thus, by the third property of Lemma \ref{lemma:billiardPaths}, $V(v_t^{\sigma}) \subseteq V(P_0) \cup \dots \cup V(P_{d-1})$.
\end{proof}

Given a path $\Gamma$ satisfying the conditions of Corollary \ref{corInclude}, we would like to find conditions for when the paths $P_0, \dots, P_{d-1}$ can be joined together to form a Hamiltonian cycle. The next definition helps us achieve this goal.
Given a vertex $v$ whose loop has a nonzero voltage assigned by $\sigma$, we say that a set of edges $E \subseteq E( v^{\sigma}   )$ is in \emph{alternating consecutive order} if $E$ forms a color class of a proper $2$-coloring of the edges of some path in $v^{\sigma}$.
We sometimes call $E$ an \emph{alternating consecutive edge set}. For a graph $\Gamma$ with a voltage assignment $\sigma$, we often consider subgraphs of $\Gamma^{\sigma}$ that intersect some fiber $v^{\sigma}$ of $\Gamma^{\sigma}$ in all edges except for some alternating consecutive edge set. In other words, we may consider a subgraph $H \subseteq \Gamma^{\sigma}$ for which $E(H) \cap E(v^{\sigma}) = E(v^{\sigma}) \setminus E$, where $E$ is an alternating consecutive edge set in $v^{\sigma}$. We show an example of such a subgraph $H$ in Figure \ref{figAltCon}. 

% We will usually build a Hamiltonian cycle in $\Gamma^{\sigma}$ starting from special 2-factors which will intersect fibers of neighbours of a given vertex $v$ in $\Gamma$ in all but alternating consecutive edge sets, respectively. We then connect these 2-factors along these alternating consecutive edge sets as well as edges of the fiber $v^{\sigma}$ into a Hamiltonian cycle. The following two results guarantee such 2-factors in lifts of paths and spider graphs, two building blocks in our decomposition theorem that will be used in next section.

Next, we introduce a lemma giving a condition for the existence of a $2$-factor in the lift of a path satisfying certain properties. We apply this lemma later when finding
a Hamiltonian cycle in the lift of a path.
In this lemma and in later lemmas, we consider a path whose endpoints both have a label $r$ which is coprime to $n$ (and whose cut-edges are assumed by Lemma \ref{lemZero} all to have labels of $0$).
However, we note that given a path $\Gamma$ with such a voltage $r$ at its endpoints, we can give each vertex $v \in V(\Gamma)$ a new label $\phi(v) = \sigma(v) r^{-1} \in \mathbb Z_n$, and then $\Gamma^{\phi}$ is isomorphic to $\Gamma^{\sigma}$. Therefore, by relabelling group elements appropriately, we may assume in the proofs of this lemma and later lemmas that the endpoints of our path $\Gamma$ have a voltage of $1$.
%
%%Figure-2%%
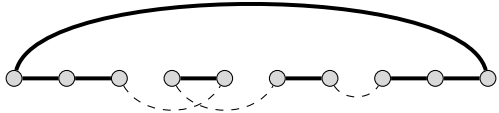
\begin{figure}

\begin{center}
\begin{tikzpicture}
    [scale=0.7,auto=left,every node/.style={circle,fill=gray!30},minimum size = 6pt,inner sep=0pt]

  \clip (7.5,1.5) rectangle + (12,-2.2);

%\node (z) at (19,0) [draw = white, fill = white] {$v^{\sigma}$};
%\node (z) at (19,-1) [draw = white, fill = white] {$u_0^{\sigma}$};
%\node (z) at (18.5,-2.5) [draw = white, fill = white] {$T_0^{\sigma}$};

%\node (a9) at (8,0) [draw = black] {};
\node (a10) at (9,0) [draw = black] {};
\node (a11) at (10,0) [draw = black] {};
\node (a12) at (11,0) [draw = black] {};
\node (a13) at (12,0) [draw = black] {};
\node (a14) at (13,0) [draw = black] {};
\node (a15) at (14,0) [draw = black] {};
\node (a16) at (15,0) [draw = black] {};
\node (a17) at (16,0) [draw = black] {};
\node (a18) at (17,0) [draw = black] {};
\node (a19) at (18,0) [draw = black] {};
%\node (a20) at (19,0) [draw = black] {};
%\node (a21) at (20,0) [draw = black] {};

\draw [line width = 1.5pt] (a10) to  [out=75,in=105,looseness = 0.5] (a19) ;
%\node (b12) at (11,0-1) [draw = black] {};
%\node (b13) at (12,0-1) [draw = black] {};
%\node (b14) at (13,0-1) [draw = black] {};
%\node (b15) at (14,0-1) [draw = black] {};
%\node (b16) at (15,0-1) %[draw = black] {};
%\node (b17) at (16,-1) [draw = black] {};
%\node (b18) at (17,-1) [draw = black] {};
%\node (b19) at (18,-1) [draw = black] {};

 %\foreach \from/\to in {a10/a11,a11/a12,a12/b12,a13/b13,a14/b14,a15/b15,a16/b16,a17/b17, a13/a14,a15/a16,a17/a18,a18/a19, a12/b12,b13/a13,b16/a16,b17/a17}
    %\draw (\from) -- (\to);
 
 %\draw [line width = 2pt, red] (b12) to   (b13) ;
 % \draw [line width = 2pt] (a12) to   (b12) ;
  %  \draw [line width = 2pt] (a13) to   (b13) ;
    
       % \draw [line width = 2pt] (a16) to   (b16) ;
        %\draw [line width = 2pt] (a17) to   (b17) ;
          \draw [line width = 1.5pt] (a11) to   (a10) ;
  \draw [line width = 1.5pt] (a11) to   (a12) ;
    \draw [line width = 1.5pt] (a13) to   (a14) ;
      \draw [line width = 1.5pt] (a15) to   (a16) ;
        \draw [line width = 1.5pt] (a17) to   (a18) ;
          \draw [line width = 1.5pt] (a18) to   (a19) ;
  % \draw [line width = 2pt, red] (a16) to   (a17) ;
     % \draw [line width = 2pt, red] (b16) to   (b17) ;
  \draw [dashed] (a12) to  [out=300,in=240] (a14) ;
  \draw [dashed] (a13) to  [out=300,in=240] (a15) ;
%   \draw [dashed] (b15) to  [out=270,in=270] (b18) ;
%      \draw [dashed] (b12) to  [out=270,in=270] (b19) ;
         \draw [dashed] (a16) to  [out=300,in=240] (a17) ;
%\draw [decorate,decoration={brace,amplitude=10pt,mirror,raise=4pt},yshift=0pt]
%(18.5,0) -- (18.5,1.3) node [white,midway,xshift=0.8cm] {};
           %\node (z) at (19.7,0.65) [draw = white, fill = white] {$v^{\sigma}$};
         
\end{tikzpicture} 
\end{center}
\caption{
The figure shows a fiber $v^{\sigma}$ in the lift $\Gamma^{\sigma}$ of a voltage graph $\Gamma$.
The edges in bold show the intersection of a subgraph $H \subseteq \Gamma^{\sigma}$ with $v^{\sigma}$, and the dashed edges represent the edges of $H$ outside of $v^{\sigma}$. 
The subgraph $H$ intersects $v^{\sigma}$ at all edges except for three edges, and these three missing edges
%(that are omitted in the figure)
form an alternating consecutive edge set in $v^{\sigma}$.}
\label{figAltCon}
\end{figure}

\begin{lemma}
\label{lemma2factor}
Let $\Gamma = (v_1, \dots, v_m)$ be a reflexive path with a voltage assignment $\sigma : A(\Gamma)\to \mathbb{Z}_n $. 
Suppose that $\sigma(v_1) = \sigma(v_m) =  r$, where $r$ is coprime with $n$.
Then, for each even integer $d$ satisfying
%$$d \geq max\{\gcd(n,\sigma(v_i,v_i)):   2\leq k\leq n-1\}$$
$$2 \mc (\Gamma) \leq d \leq n,$$
and for each integer $l$ satisfying $0\leq l\leq n-1$, there exists a $2$-factor $F$ of $\Gamma^{\sigma}$, as well as two edge subsets $E^L \subseteq E(v_1^{\sigma})$ and $E^R \subseteq E(v_m^{\sigma})$, such that
\begin{itemize}
    \item $|E^L| = |E^R| = d/2$;
    \item $E^L=\{[(v_1,l),(v_1,l+r)],[(v_1,l+2r),(v_1,l+3r)],\dots, 
    [(v_1,l+(d-2)r),(v_1,l+(d-1)r)]\}$; 
    \item $E^L$ and $E^R$ are alternating consecutive edge sets in $v_1^{\sigma}$ and $v_m^{\sigma}$, respectively;
    %\item $E(F) \cap E(u_0^{\sigma}) = E(u_0^{\sigma}) \setminus E^L$, and $E(F) \cap E(u_k^{\sigma}) = E(u_k^{\sigma}) \setminus E^R$.
    \item $F$ contains all edges of $v_1^{\sigma}$ except $E^L$ and all edges of $v_m^{\sigma}$ except $E^R$.
\end{itemize}
\end{lemma}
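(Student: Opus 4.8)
The plan is to build the $2$-factor by combining a family of $d$ vertex-disjoint paths produced by the extended billiard strategy with the left-over edges of the two end fibers. The starting observation is that, since $\sigma(v_1)=\sigma(v_m)=r$ with $\gcd(r,n)=1$, each of $v_1^\sigma$ and $v_m^\sigma$ is a single $n$-cycle in which $(v_1,g)$ is joined to $(v_1,g+r)$; in particular the vertices $(v_1,l),(v_1,l+r),\dots,(v_1,l+(d-1)r)$ occupy $d$ consecutive positions on the cycle $v_1^\sigma$. The idea is to run $d$ paths from these $d$ vertices of $v_1^\sigma$ through all of the internal fibers to $d$ consecutive vertices of $v_m^\sigma$, and then to close everything into a $2$-regular subgraph by keeping all cycle edges of $v_1^\sigma$ and $v_m^\sigma$ except for two suitable alternating consecutive sets $E^L$ and $E^R$.

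To obtain paths that do not themselves consume edges of the end fibers, I would first replace $\sigma$ by the assignment $\sigma'$ that agrees with $\sigma$ on every arc except that $\sigma'(v_1)=0$; this changes neither $\mc(\Gamma)$ (which depends only on the internal loops) nor the internal fibers. The point of this step is that, tracing the induction in Lemma \ref{lemma:billiardPaths}, the billiard extension is applied at $v_1,\dots,v_{m-1}$, so a nonzero loop at $v_1$ would make the paths run along $v_1^\sigma$; zeroing that loop suppresses this. Applying Lemma \ref{lemma:billiardPaths} to $(\Gamma,\sigma')$ with the given $l,r,d$ yields paths $P_0,\dots,P_{d-1}$ beginning at $(v_1,l+jr)$, and since $\sigma'(v_1)=0$ the billiard adds nothing at $v_1$, so each $P_j$ meets $v_1^\sigma$ only in its start; as $v_m$ is never billiard-extended, the $P_j$ meet $v_m^\sigma$ only at their ends. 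These paths use no loop at $v_1$ or $v_m$, hence are also paths of $\Gamma^\sigma$. By Corollary \ref{corInclude} (using $2\mc(\Gamma)\le d$ and $\gcd(r,n)=1$) they cover every vertex of each internal fiber $v_t^\sigma$ with $2\le t\le m-1$, and by the second property of Lemma \ref{lemma:billiardPaths} they end at a set $\{(v_m,i_m+jr):0\le j\le d-1\}$ of $d$ consecutive vertices of the cycle $v_m^\sigma$.

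I would then take $E^L$ exactly as in the statement and let $E^R=\{[(v_m,i_m),(v_m,i_m+r)],[(v_m,i_m+2r),(v_m,i_m+3r)],\dots,[(v_m,i_m+(d-2)r),(v_m,i_m+(d-1)r)]\}$ be the analogous pairing of the $d$ arrival vertices, and define $F$ to be the union of the $P_j$ together with $E(v_1^\sigma)\setminus E^L$ and $E(v_m^\sigma)\setminus E^R$. Both $E^L$ and $E^R$ are color classes of proper $2$-colorings of sub-paths of the respective cycles, so they are alternating consecutive of size $d/2$, and the last two bullets hold by construction. It then remains to check that $F$ is $2$-regular: internal-fiber vertices lie on exactly one $P_j$ as interior points and so have degree $2$; vertices of $v_1^\sigma$ (resp.\ $v_m^\sigma$) outside the block of $d$ endpoints keep both of their cycle edges and lie on no path; and the crux is the count at the $d$ endpoints, where, because $E^L$ takes alternate edges of the path on the $d$ consecutive start vertices, each start vertex is incident to exactly one edge of $E^L$, hence retains exactly one cycle edge, which combines with the single path-edge leaving it into $P_j$ to give degree $2$. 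The identical argument applies at $v_m^\sigma$ with $E^R$.

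The main obstacle is this final parity bookkeeping at the two end fibers: one must verify that the chosen alternating consecutive sets remove exactly one incident edge from each of the $d$ path endpoints and none from the remaining fiber vertices, which is precisely what forces every vertex of $F$ to have degree $2$. The evenness of $d$ is exactly what guarantees that the $d$ consecutive endpoints can be paired off by such an alternating set, and the preliminary step of zeroing the loop at $v_1$ is what prevents the billiard paths from interfering with the edges of $v_1^\sigma$ that we need to keep under control.
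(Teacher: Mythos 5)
Your proposal is correct and takes essentially the same route as the paper: construct the $d$ billiard paths of Lemma~\ref{lemma:billiardPaths}, cover the internal fibers via Corollary~\ref{corInclude}, and close the union of paths into a $2$-factor by retaining all edges of $v_1^{\sigma}$ and $v_m^{\sigma}$ except the two alternating matchings $E^L$ and $E^R$ on the $d$ starting and $d$ arrival vertices, with evenness of $d$ supplying the degree count at those endpoints. Your two refinements---zeroing the loop at $v_1$ so the billiard extension cannot consume edges of $v_1^{\sigma}$, and pairing the arrival vertices without assuming $P_i$ ends at $(v_m,l+ir+g)$---are sound and in fact tighten two points the paper treats loosely (its construction implicitly assumes the paths meet $v_1^{\sigma}$ only at their starting vertices, and its claimed order-preserving endpoints are unnecessary for a $2$-factor), without changing the substance of the argument.
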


%Then $\Gamma^{\sigma}$ contains a 2-factor $F$
%with an edge subset $E \subseteq E(F)$ such that $E$ is in alternating consecutive order, and such that $E$ contains at least one edge from each component of $F$.
%Furthermore, the 2-consecutive set can start at any vertex over the fiber.

\begin{proof}
%By applying an appropriate automorphism to $\Gamma^{\sigma}$, we may assume that $\sigma(u_0,u_0) = \sigma(u_k,u_k) = 1$.
%{\PB This is technically incorrect; if we apply an automorphism to make this assumption, then when we undo the automorphism to get back the original labels, bullet $2$ will be messed up; that is, the $+1$ $+2$ $+3$ will turn into something else. I think it might be worth going way back and saying that we will usually assume that the voltages at path endpoints are all $1$, and then only generalize at the end of the section, maybe not until the proof of the main theorem.}
Let $d$ and $l$ be as in the statement of the lemma. By our previous discussion, we may assume that $r = 1$ by relabelling our group elements. 
We construct a family of $d$ paths $\mathcal P = \{P_0,...,P_{d-1}\}$ by the process of Lemma \ref{lemma:billiardPaths} (using our values $l$ and $r = 1$), each with one endpoint in the fiber $v_1^{\sigma}$ and other endpoint in the fiber $v_m^{\sigma}$. 
By Lemma \ref{lemma:billiardPaths}, we may assume that the endpoints of each path $P_i$ are $(v_1, l+i)$ and $(v_m,l+i + g)$, for a single value $g \in \mathbb{Z}_n$. Furthermore, by Corollary \ref{corInclude}, we may assume that the paths of $\mathcal P$ contain all vertices of the fibers $v_2^{\sigma}, v_3^{\sigma}, \dots,v_{m-1}^{\sigma}$. 
We construct the $2$-factor $F$ from the union $P_0 \cup P_1 \cup \dots \cup P_{d-1}$ by adding the following edges: 
\begin{itemize}
    \item all edges in the unique perfect matching on the vertex set $\{(v_1,l+i): 1 \leq i \leq d - 2\}$ in $v_1^{\sigma}$;
    \item all edges in the unique perfect matching on the vertex set $\{(v_m,l+i +g): 1 \leq i \leq d - 2\}$ in $v_m^{\sigma}$;
    \item all edges of the path $(v_1, l+d-1), (v_1, l+d), (v_1, l+d+1), \dots, (v_1,l-1), (v_1, l)$;
     \item all edges of the path $(v_m, l+d+g-1), (v_m, l+d+g), (v_m, l+d+g+1), \dots, (v_m,l+g-1), (v_m,l+g)$.
\end{itemize}

It is straightforward to check that $F$ is a $2$-factor of $\Gamma^{\sigma}$. We let $E^L$ consist of all edges in the unique perfect matching on $\{(v_1,l+i): 0 \leq i \leq d - 1\}$ in $v_1^{\sigma}$. Similarly, we let the set $E^R$ to consist of all edges in the unique perfect matching on $\{(v_m,l+i+g): 0 \leq i \leq d - 1\}$ in $v_m^{\sigma}$. By construction, all four properties of the lemma are satisfied for sets $E^L$ and $E^R$.
\end{proof}

For an example of a $2$-factor $F$ described in Lemma \ref{lemma2factor}, see Figure \ref{fig:example_factor1}.

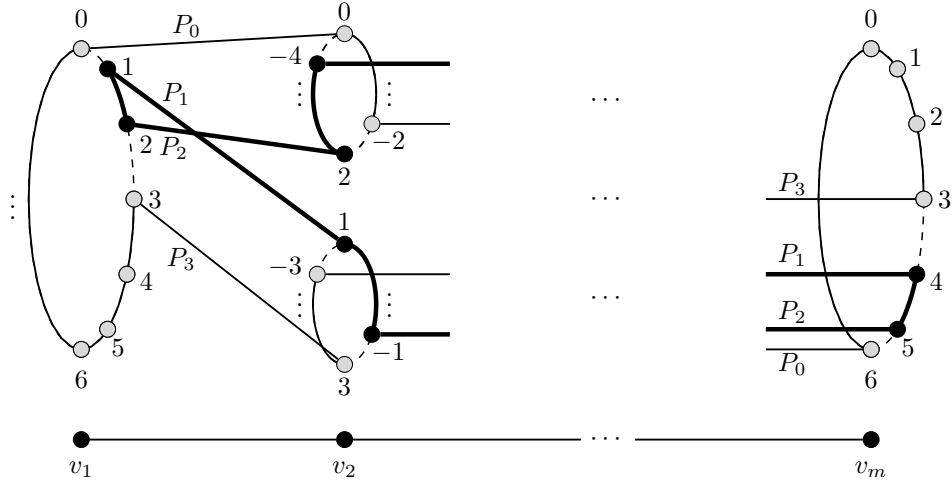
\begin{figure}
    \centering
    \begin{tikzpicture}
[scale=2,xscale = 0.35, yscale = 1]

     \node (z) at (2,1.15) [ draw = white, fill = white]  {$P_0$}; 
     \node (z) at (1.8,0.7) [ draw = white, fill = white]  {$P_1$}; 
     \node (z) at (1.75,0.34) [ draw = white, fill = white]  {$P_2$}; 
     \node (z) at (1.9,-0.4) [ draw = white, fill = white]  {$P_3$};

     \node (z) at (13.5,-1.1) [ draw = white, fill = white]  {$P_0$}; 
     \node (z) at (13.5,-0.375) [ draw = white, fill = white]  {$P_1$}; 
     \node (z) at (13.5,-0.75) [ draw = white, fill = white]  {$P_2$}; 
     \node (z) at (13.5,0.1) [ draw = white, fill = white]  {$P_3$};

     \node (z) at (-1.3,0) [ draw = white, fill = white]  {$\vdots$}; \node (z) at (5.85,0.75) [ draw = white, fill = white]  {$\vdots$}; 
     \node (z) at (4.15,0.75) [ draw = white, fill = white]  {$\vdots$}; 
     \node (z) at (5.85,-0.65) [ draw = white, fill = white]  {$\vdots$}; 
     \node (z) at (4.15,-0.65) [ draw = white, fill = white]  {$\vdots$}; 

 \draw [line width = 0.25mm,domain={90}:{360}] plot ({1*cos(\x)}, {1*sin(\x)});

 \draw [line width = 0.6mm,domain={30}:{60}] plot ({1*cos(\x)}, {1*sin(\x)});

 \node (z) at (0,1.2) [ draw =white, circle,fill=white,minimum size = 0pt,inner sep=0pt]  {$0$};
 
  \node (z) at (0,-1.2) [ draw =white, circle,fill=white,minimum size = 0pt,inner sep=0pt]  {$6$};

 \node (z) at ({1.2*cos(55)+0.2},{1.2*sin(55)-0.1}) [ draw = white, circle,fill=white,minimum size = 0pt,inner sep=0pt]  {$1$};
 
  \node (z) at ({1.2*cos(20)+0.1},{1.2*sin(20)-0.05}) [ draw = white, circle,fill=white,minimum size = 0pt,inner sep=0pt]  {$2$};
  
    \node (z) at ({1.2*cos(0)+0.2},{1.2*sin(0)-0.00}) [ draw = white, circle,fill=white,minimum size = 0pt,inner sep=0pt]  {$3$};

    \node (z) at ({1.2*cos(-30)+0.2},{1.2*sin(-30)+0.05}) [ draw = white, circle,fill=white,minimum size = 0pt,inner sep=0pt]  {$4$};

    \node (z) at ({1.2*cos(-60)+0.1},{1.2*sin(-60)+0.05}) [ draw = white, circle,fill=white,minimum size = 0pt,inner sep=0pt]  {$5$};

%%Here be the labels for the rightmost ellipse
 \node (z) at (15,1.2) [ draw =white, circle,fill=white,minimum size = 0pt,inner sep=0pt]  {$0$};
 
  \node (z) at (15,-1.2) [ draw =white, circle,fill=white,minimum size = 0pt,inner sep=0pt]  {$6$};

 \node (z) at ({15+1.2*cos(60)+0.25},{1.2*sin(60)-0.1}) [ draw = white, circle,fill=white,minimum size = 0pt,inner sep=0pt]  {$1$};
 
  \node (z) at ({15+1.2*cos(30)+0.2},{1.2*sin(30)-0.05}) [ draw = white, circle,fill=white,minimum size = 0pt,inner sep=0pt]  {$2$};
  
    \node (z) at ({15+1.2*cos(0)+0.2},{1.2*sin(0)-0.00}) [ draw = white, circle,fill=white,minimum size = 0pt,inner sep=0pt]  {$3$};

    \node (z) at ({15+1.2*cos(-30)+0.2},{1.2*sin(-30)+0.05}) [ draw = white, circle,fill=white,minimum size = 0pt,inner sep=0pt]  {$4$};

    \node (z) at ({15+1.2*cos(-60)+0.1},{1.2*sin(-60)+0.05}) [ draw = white, circle,fill=white,minimum size = 0pt,inner sep=0pt]  {$5$};

\foreach \shift in {5}
{

     \draw [line width = 0.25mm,domain={-30}:{90}] plot ({5+0.6*cos(\x)}, {0.7+0.4*sin(\x)});

    \draw [line width = 0.6mm,domain={150}:{270}] plot ({5+0.6*cos(\x)}, {0.7+0.4*sin(\x)});

     \draw [line width = 0.6mm,domain={-30}:{90}] plot ({5+0.6*cos(\x)}, {-0.7+0.4*sin(\x)});

    \draw [line width = 0.25mm,domain={150}:{270}] plot ({5+0.6*cos(\x)}, {-0.7+0.4*sin(\x)});

        \draw [line width = 0.25mm,domain={0}:{270}] plot ({15+cos(\x)}, {sin(\x)});
        
   \draw [line width = 0.6mm,domain={-60}:{-30}] plot ({15+cos(\x)}, {sin(\x)});

    \draw[line width = 0.25mm, dashed] (\shift,0.7) ellipse (0.6 and 0.4);

    \draw[line width = 0.2mm, dashed] (\shift,-0.7) ellipse (0.6 and 0.4);

    \node (a1) at (5,1.1) [ draw = black, circle,fill=gray!30,minimum size = 6pt,inner sep=0pt]  {};
    
    \node (b1) at (5,1.2) [ draw =white, circle,fill=white,minimum size = 0pt,inner sep=0pt]  {};
        \node (z) at (5,1.25) [ draw =white, circle,fill=white,minimum size = 0pt,inner sep=0pt]  {$0$};

    \node (a2) at (5,0.3) [ draw = black, circle,fill=black,minimum size = 6pt,inner sep=0pt]  {};
      
    \node (b2) at (5,0.2) [ draw = black, circle,fill=black,minimum size = 0pt,inner sep=0pt]  {};
    \node (z) at (5,0.15) [ draw = white, circle,fill=white,minimum size = 0pt,inner sep=0pt]  {$2$};
    
    \node (a4) at (5,-1.1) [ draw = black, circle,fill=gray!30,minimum size = 6pt,inner sep=0pt]  {};
    \node (b4) at (5,-1.2) [ draw =white, circle,fill=white,minimum size = 0pt,inner sep=0pt]  {};
    \node (z) at (5,-1.25) [ draw =white, circle,fill=white,minimum size = 0pt,inner sep=0pt]  {$3$};

    \node (a3) at (5,-0.3) [ draw = black, circle,fill=black,minimum size = 6pt,inner sep=0pt]  {};
     \node (b3) at (5,-0.2) [ draw =white, circle,fill=white,minimum size = 0pt,inner sep=0pt]  {};
     \node (z) at (5,-0.15) [ draw =white, circle,fill=white,minimum size = 0pt,inner sep=0pt]  {$1$};

    \node (z) at ({5 + 0.6*1.6*cos(-30)},{0.7 + 0.4*1.6*sin(-30)}) [ draw = white, circle,fill=white,minimum size = 0pt,inner sep=0pt]  {$-2$};
    
    \node (a5) at ({5 + 0.6*cos(-30)},{0.7 + 0.4*sin(-30)}) [ draw = black, circle,fill=gray!30,minimum size = 6pt,inner sep=0pt]  {};
    \node (b5) at ({5 + 0.6*1.4*cos(-30)},{0.7 + 0.4*1.4*sin(-30)}) [ draw = black, circle,fill=black,minimum size = 0pt,inner sep=0pt]  {};
    \node (c5) at (9,{0.7 + 0.4*sin(-30)}) [ draw = black, circle,fill=black,minimum size = 0pt,inner sep=0pt]  {};

    \node (z) at ({-0.35+5 + 0.6*1.5*cos(150)},{-0.05+ 0.7 + 0.4*1.5*sin(150)}) [ draw = white, circle,fill=white,minimum size = 0pt,inner sep=0pt]  {$-4$};
        \node (a6) at ({5 + 0.6*cos(150)},{0.7 + 0.4*sin(150)}) [ draw = black, circle,fill=black,minimum size = 6pt,inner sep=0pt]  {};
    
    \node (b6) at ({5 + 0.6*1.4*cos(150)},{0.7 + 0.4*1.4*sin(150)}) [ draw = black, circle,fill=black,minimum size = 0pt,inner sep=0pt]  {};
    
     \node (c6) at (9,{0.7 + 0.4*sin(150)}) [ draw = black, circle,fill=black,minimum size = 0pt,inner sep=0pt]  {};

    \node (z) at (2*\shift,0) [ draw = white, fill = white]  {$\cdots$}; 
    \node (z) at (2*\shift,0.66) [ draw = white, fill = white]  {$\cdots$};
     \node (z) at (2*\shift,-0.66) [ draw = white, fill = white]  {$\cdots$};
     
     \node (z) at ({5 + 0.6*1.6*cos(-30)},{-0.7 + 0.4*1.6*sin(-30)}) [ draw = white, circle,fill=white,minimum size = 0pt,inner sep=0pt]  {$-1$};
     
     \node (a7) at ({5 + 0.6*cos(-30)},{-0.7 + 0.4*sin(-30)}) [ draw = black, circle,fill=black,minimum size = 6pt,inner sep=0pt]  {};
     \node (b7) at ({5 + 0.6*1.4*cos(-30)},{-0.7 + 0.4*1.4*sin(-30)}) [ draw = black, circle,fill=black,minimum size = 0pt,inner sep=0pt]  {};
     \node (c7) at (9,{-0.7 + 0.4*sin(-30)}) [ draw = black, circle,fill=black,minimum size = 0pt,inner sep=0pt]  {};

     \node (z) at ({-0.35+5 + 0.6*1.5*cos(150)},{-0.05+ -0.7 + 0.4*1.5*sin(150)}) [ draw = white, circle,fill=white,minimum size = 0pt,inner sep=0pt]  {$-3$};
    \node (a8) at ({5 + 0.6*cos(150)},{-0.7 + 0.4*sin(150)}) [ draw = black, circle,fill=gray!30,minimum size = 6pt,inner sep=0pt]  {};
        \node (b8) at ({5 + 0.6*1.4*cos(150)},{-0.7 + 0.4*1.4*sin(150)}) [ draw = black, circle,fill=black,minimum size = 0pt,inner sep=0pt]  {};
     \node (c8) at (9,{-0.7 + 0.4*sin(150)}) [ draw = black, circle,fill=black,minimum size = 0pt,inner sep=0pt]  {};

\foreach \s in {0,3}
{

%\node (z) at (\s*\shift,-2)  [ draw = black, fill = black, circle,minimum size = 6pt,inner sep=0pt] {};

%\node (z) at (\s*\shift,-1.7) [ draw = white, fill = white]  {$v_{\s}$};

\draw[line width = 0.2mm, dashed] (\shift*\s,0) circle [thick, radius=1];

\foreach \i in {0,1,2,3,9,10,11}
    {

		\node (z) at ({\shift*\s + cos(30*\i)},{sin(30*\i)}) [ draw = black, circle,fill=black,minimum size = 6pt,inner sep=0pt]  {};

    }
    %\node (z) at ({\shift*\s +cos(90-30*\s)},{sin(90-30*\s)})  [ draw = black, circle,fill=gray!40!white,minimum size = 6pt,inner sep=0pt]  {};
    %\node (z) at ({\shift*\s +cos(60-30*\s)},{sin(60-30*\s)})  [ draw = black, circle,fill=gray!40!white,minimum size = 6pt,inner sep=0pt]  {};
    %\node (z) at ({\shift*\s +cos(30-30*\s)},{sin(30-30*\s)})  [ draw = black, circle,fill=gray!40!white,minimum size = 6pt,inner sep=0pt]  {};

}

}
		 \draw [line width = 0.25mm]   ( {cos(30*3)},{sin(30*3)}) --  (a1) ;
		 \draw [line width = 0.6mm]   ( {cos(30*2)},{sin(30*2)}) --  (a3) ;
		  \draw [line width = 0.6mm]   ( {cos(30)},{sin(30)}) --  (a2) ;		  \draw [line width = 0.25mm]   ( {cos(0)},{sin(0)}) to %[out=0,in=135] 
		  (a4) ;
		  
		   %\draw [line width = 0.2mm]   ( a2) --  (b2) ;
		    %\draw [line width = 0.6mm]   ( a1) --  (b1) ;	
            %\draw [line width = 0.6mm]   ( a3) --  (b3) ;
		   % \draw [line width = 0.2mm]   ( a4) --  (b4) ;	
		     %\draw [line width = 0.6mm]   ( a5) --  (b5) ;	
           % \draw [line width = 0.2mm]   ( a6) --  (b6) ;	
             % \draw [line width = 0.6mm]   ( a7) --  (b7) ;	
               % \draw [line width = 0.2mm]   ( a8) --  (b8) ;	
                
               % \draw [line width = 0.6mm]   ( a5) --  (c5) ;	
                % \draw [line width = 0.2mm]   ( a6) --  (c6) ;	
               % \draw [line width = 0.6mm]   ( a7) --  (c7) ;	
               % \draw [line width = 0.2mm]   ( a8) --  (c8) ;	

                %\draw [line width = 0.6mm]   ( b1) to [out = 0, in = 60,looseness = 1.1]  (b5) ;	
                  %\draw [line width = 0.2mm]   ( b2) to [out = 180, in = -120,looseness = 1.1]  (b6) ;
                  
                  %\draw [line width = 0.6mm]   ( b3) to [out = 0, in = 60,looseness = 1.1]  (b7) ;	
                 % \draw [line width = 0.2mm]   ( b4) to [out = 180, in = -120,looseness = 1.1]  (b8) ;

                \draw [line width = 0.6mm]   (a6) --  ( 7,{0.7 + 0.4*sin(150)}) ;
                \draw [line width = 0.25mm]   (a5) --  ( 7,{0.7 + 0.4*sin(-30)}) ;
                
                \draw [line width = 0.6mm]   (a7) --  ( 7,{-0.7 + 0.4*sin(-30)}) ;
                \draw [line width = 0.25mm]   (a8) --  ( 7,{-0.7 + 0.4*sin(150)}) ;

                \draw [line width = 0.25mm]   ( {15+cos(30*3)},{-sin(30*3)}) --  ( 13,{-sin(30*3)}) ;
                \draw [line width = 0.6mm]   ( {15+cos(30*2)},{-sin(30*2)}) --  ( 13,{-sin(30*2)}) ;
                \draw [line width = 0.6mm]   ( {15+cos(30*1)},{-sin(30*1)}) --  ( 13,{-sin(30*1)}) ;
                \draw [line width = 0.25mm]   ( {15+cos(30*0)},{-sin(30*0)}) --  ( 13,{-sin(30*0)}) ;
    
    \foreach \zShift in {0.4}
    {       
    \node (p1) at (0,-2+\zShift) [ draw = black, circle,fill=black,minimum size = 6pt,inner sep=0pt]  {};       \node (p2) at (5,-2+\zShift) [ draw = black, circle,fill=black,minimum size = 6pt,inner sep=0pt]  {};    
    \node (p3) at (15,-2+\zShift) [ draw = black, circle,fill=black,minimum size = 6pt,inner sep=0pt]  {};

    \draw [line width = 0.25mm]   (p1) --  (p2) ;
    \draw [line width = 0.25mm]   (p3) --  (p2) ;
      \node (z) at (10,-2+\zShift) [ draw = white, fill = white]  {$\cdots$}; 
      
        \node (z) at (0,-2.2+\zShift) [ draw = white, fill = white]  {$v_1$}; 
           \node (z) at (5,-2.2+\zShift) [ draw = white, fill = white]  {$v_2$}; 
           
              \node (z) at (15,-2.2+\zShift) [ draw = white, fill = white]  {$v_m$}; 
         }    
          \node (z) at (0,1) [ draw = black, fill = gray!30,circle,minimum size = 6pt,inner sep=0pt]  {}; 
          
          \node (z) at (0,-1) [ draw = black, fill = gray!30,circle,minimum size = 6pt,inner sep=0pt]  {}; 
          
          \node (z) at (1,0) [ draw = black, fill = gray!30,circle,minimum size = 6pt,inner sep=0pt]  {}; 
          
          \node (z) at ({cos(30)},{-sin(30)}) [ draw = black, fill = gray!30,circle,minimum size = 6pt,inner sep=0pt]  {}; 
       
          \node (z) at ({cos(60)},{-sin(60)}) [ draw = black, fill = gray!30,circle,minimum size = 6pt,inner sep=0pt]  {}; 
              
               \node (z) at (15,1) [ draw = black, fill = gray!30,circle,minimum size = 6pt,inner sep=0pt]  {}; 
          
          \node (z) at (15,-1) [ draw = black, fill = gray!30,circle,minimum size = 6pt,inner sep=0pt]  {}; 
          
          \node (z) at (16,0) [ draw = black, fill = gray!30,circle,minimum size = 6pt,inner sep=0pt]  {}; 
          
          \node (z) at ({15+cos(30)},{sin(30)}) [ draw = black, fill = gray!30,circle,minimum size = 6pt,inner sep=0pt]  {}; 
       
          \node (z) at ({15+cos(60)},{sin(60)}) [ draw = black, fill = gray!30,circle,minimum size = 6pt,inner sep=0pt]  {};

\end{tikzpicture}
    
    \caption{An example of construction of the $2$-factor $F$ described in Lemma~\ref{lemma2factor}
    constructed using a family $\mathcal P = \{P_0,P_1,P_2,P_3\}$ of paths. 
    The underlying group is $\mathbb{Z}_n$, and voltage of each vertex from $\mathbb Z_n$ is shown. Here, we suppose that $d=4\ge 2\mc (\Gamma)$ and $g=3$.
    The two different shades of vertices show the two components of the $2$-factor $F$. 
    Moreover, the component of $F$ containing $P_1$ and $P_2$ is highlighted in bold. The set $E^L$ is depicted by the two dashed edges in $v_1^{\sigma}$, and the set $E^R$ is depicted by the two dashed edges in $v_m^{\sigma}$. 
    }
    \label{fig:example_factor1}
\end{figure}

Next, we need to define some notation and terminology.
%\begin{definition} \label{def:orderpres}
Let $\Gamma = (v_1, \dots, v_m)$ be a reflexive path, and let $\sigma:A(\Gamma) \rightarrow \mathbb{Z}_n$ be a voltage assignment satisfying $\sigma(v_1,v_1) = \sigma(v_m, v_m)$. Let $\mathcal P = \{P_0,P_1,...,P_{d-1}\}$ 
be a family of $d = 2\mc(\Gamma)$ paths in $\Gamma^{\sigma}$ as defined in Lemma \ref{lemma:billiardPaths}.
For $2 \leq t \leq m-1$,
recall that $\pi(v_t)$ denotes the permutation on $\{0,\dots,d-1\}$ describing the order of the paths $P_0, \dots, P_{d-1}$ as they leave $v_t^{\sigma}$.
By our definition, the permutation $\pi(v_{m-1})$ describes the order of the paths in $\mathcal P$ as they enter $v_m^{\sigma}$.
%Recall that by the method of Lemma \ref{lemma:billiardPaths}, our paths in $\mathcal P$ will 
%arrive at $v_m^{\sigma}$ at a vertex set $\{(v_m, \alpha), (v_m, \alpha+r), \dots, (v_m, \alpha + (d - 1)r)\}$ for some element $m \in G$. If we say that the path $P_{a_i}$ contains the vertex $(v_m, \alpha + ir)$, then $(a_1, \dots, a_d)$ is a permutation of $\{0, \dots, d-1\}$; recall that we write $\pi(v_{m-1})$
%for this permutation and refer to it as the order of the paths in $\mathcal P$ as they exit $v_{m-1}^{\sigma}$.
 We say that $(\Gamma, \sigma)$ is an \emph{odd shifting path}
 %to describe a voltage path $(\Gamma, \sigma)$
 %such that 
 if 
 %whenever
 %Lemma \ref{lemma:billiardPaths} is
 %applied with $d = 2\mc(\Gamma)$ to construct a
 %path family $\mathcal P = \{P_0, \dots, P_{d-1}\}$ along the lift $\Gamma^{\sigma}$ of an odd shifting path $\Gamma$, 
 this order $\pi(v_{m-1})$ of the paths in $\mathcal P$ is of the form $(d-s, d-s+1, \dots, d-1, 0, 1, \dots, d-s-1)$ for some odd number $0 < s < d$. In other words, if $(\Gamma,\sigma)$ is an odd shifting path, then an odd number of the $d$ paths in $\mathcal P$
 that were initially at the beginning of the order $\pi(v_1)$ are shifted to the end of the order $\pi(v_{m-1})$.
 If $(\Gamma, \sigma)$ is an odd shifting path, then we say that the value $r = \sigma(v_1, v_1) = \sigma(v_m, v_m)$ is the \emph{endpoint voltage} of $(\Gamma,\sigma)$.
 For an example of the lift of 
 an odd shifting path, see Figure \ref{fig:odd_shifting_path}.

 We give a sufficient condition that allows for the construction of odd shifting paths. We need some more notation.
Again, let $\Gamma = (v_1, \dots, v_m)$ be a reflexive path, and let $\sigma:A(\Gamma) \rightarrow \mathbb{Z}_n$ be a voltage assignment. Let $\mathcal P = \{P_0,P_1,...,P_{d-1}\}$ 
be a family of $d = 2\mc(\Gamma)$ paths in $\Gamma^{\sigma}$ as defined in Lemma \ref{lemma:billiardPaths}.
For each value $2 \leq t \leq m-1$, we write $\varphi(v_t) = \pi(v_t) \circ \pi(v_{t-1})^{-1}$, so that $\varphi(v_t)$ describes the way that the paths $P_0,\dots,P_{d-1}$ are permuted by the extended billiard strategy at $v_t$. 
Note that since $d = 2\mc(\Gamma)$ is fixed, $\varphi(v_t)$ depends only on $n$ and $\sigma(v_t, v_t)$.
%that is, $\pi(v_i)=\varphi(v_i)\circ \pi(v_{i-1})$. 
Now, consider a pair $v_t, v_{t+1}$ of consecutive internal vertices on $\Gamma$.
If  $\varphi(v_{t+1})\circ\varphi(v_{t})=\id$, then we say that $v_i,v_{t+1}$ is an \emph{order preserving pair}. A special case of an order preserving pair is an \emph{inverse pair}, where $\sigma(v_t,v_t) = -\sigma(v_{t+1},v_{t+1})$ in $\mathbb{Z}_n$.

%Notice that when $d = 2$, being order preserving is not a strong condition. In fact if we identify permutations that are in opposite orders {\PB is it clear why we would want to do this?} , then every pair is order preserving for $d = 2$.  {\PB I think the way to do this is either to define a secondary order that's the reverse of the order or not to mention it at all.}

Now, let $v_t,v_{t+1}$ be an order preserving pair on the path $\Gamma$, and let $v_{t-1}$ and $v_{t+2}$ be the two other neighbors of $v_i$ and $v_{t+1}$ on $\Gamma$, respectively. Consider the operation by which we remove $v_t$ and $v_{t+1}$, along with all three edges and two loops incident to $v_t$ and $v_{t+1}$, and then join $v_{t-1}$ and $v_{t+2}$ by a new edge with voltage $0$. We call this operation \emph{smoothing out the pair} $v_t,v_{t+1}$.
%\end{definition}
%\begin{definition} 
%For a path $\Gamma$ defined as in Definition \ref{def:orderpres}
If $\Gamma$ has an even number of vertices $m \geq 2$, and if we can recursively smooth out order preserving pairs on $\Gamma$ until only the two endpoints are left, then we say the path $\Gamma$ is an \emph{order preserving path}. 
%\end{definition}
%\begin{definition}
%For a path $\Gamma$, defined as in Definition \ref{def:orderpres}, whose 
It is straightforward to show (see Figure \ref{fig:odd_shifting_path}) that if we form a single path $\Gamma$ by joining an even number of order preserving paths $\Gamma_1, \dots, \Gamma_{2k}$ all with a common voltage $r$ coprime to $n$ at their endpoints,
then $\Gamma$ is an odd shifting path.

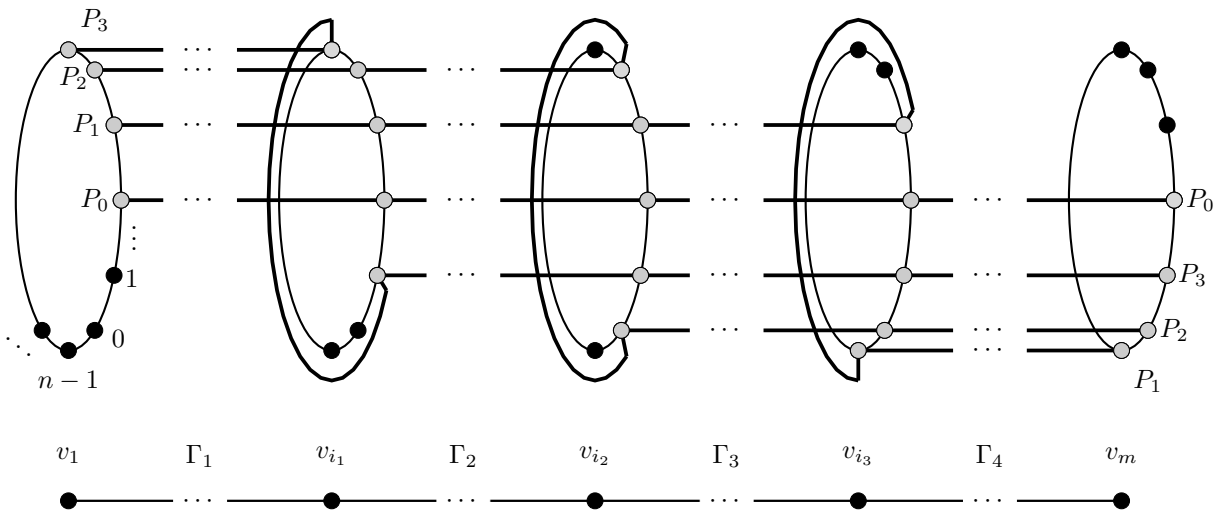
\begin{figure}

\begin{center}
\begin{tikzpicture}
[scale=2,xscale = 0.35, yscale = 1]

\node (z) at (0.5,1.2) [ draw = white, fill = white]  {$P_3$};
\node (z) at (0.1,0.816) [ draw = white, fill = white]  {$P_2$};
\node (z) at (0.35,0.5) [ draw = white, fill = white]  {$P_1$};
\node (z) at (0.5,0) [ draw = white, fill = white]  {$P_0$};

\node (z) at (20.5,-1.2) [ draw = white, fill = white]  {$P_1$};
\node (z) at (21,-0.866) [ draw = white, fill = white]  {$P_2$};
\node (z) at (21.35,-0.5) [ draw = white, fill = white]  {$P_3$};
\node (z) at (21.5,0) [ draw = white, fill = white]  {$P_0$};

\node (z) at (0,-1.2) [ draw = white, fill = white]  {$n-1$};
\node (z) at (0.95,-0.92) [ draw = white, fill = white]  {$0$};
\node (z) at (1.2,-0.52) [ draw = white, fill = white]  {$1$};
\node (z) at (1.25,-0.2) [ draw = white, fill = white]  {$\vdots$};
\node (z) at (-0.95,-0.92) [ draw = white, fill = white]  {$\ddots$};

\draw [line width=0.3mm] (0,-2)  -- (20,-2)   ;

\foreach \shift in {5}
{
\node (z) at (0*\shift,-1.7) [ draw = white, fill = white]  {$v_{1}$};
\node (z) at (4*\shift,-1.7) [ draw = white, fill = white]  {$v_{m}$};

\foreach \s in {0,1,2,3,4}
{

\node (z) at (\s*\shift,-2)  [ draw = black, fill = black, circle,minimum size = 6pt,inner sep=0pt] {};

\ifthenelse{\NOT 4 = \s}{
\node (z) at ({(\s+0.5)*\shift},-2) [ draw = white, fill = white]  {$\cdots$};

\draw [line width=0.5mm] ({\shift*\s +cos(90-30*\s)},{sin(90-30*\s)})  -- ({\shift*\s +cos(0)+0.8},{sin(90-30*\s)}) node[ sloped, scale = 1] {} ;
\draw [line width=0.5mm] ({\shift*\s +cos(60-30*\s)},{sin(60-30*\s)})  -- ({\shift*\s +cos(0)+0.8},{sin(60-30*\s)}) node[ sloped, scale = 1] {} ;
\draw [line width=0.5mm] ({\shift*\s +cos(30-30*\s)},{sin(30-30*\s)})  -- ({\shift*\s +cos(0)+0.8},{sin(30-30*\s)}) node[ sloped, scale = 1] {} ;
\draw [ line width=0.5mm] ({\shift*\s +cos(0-30*\s)},{sin(0-30*\s)})  --  ({\shift*\s +cos(0)+0.8},{sin(0-30*\s)}) node[ sloped, scale = 1] {} ;

\node (z) at ({\shift*\s +cos(0)+1.5},{sin(90-30*\s)}) [draw = white, fill = white]  {$\cdots$};
\node (z) at ({\shift*\s +cos(0)+1.5},{sin(60-30*\s)}) [draw = white, fill = white]  {$\cdots$};
\node (z) at ({\shift*\s +cos(0)+1.5},{sin(30-30*\s)}) [draw = white, fill = white]  {$\cdots$};
\node (z) at ({\shift*\s +cos(0)+1.5},{sin(0-30*\s)}) [draw = white, fill = white]  {$\cdots$};

\draw [line width=0.5mm] ({\shift*(\s+1) +cos(90-30*\s)},{sin(90-30*\s)})  -- ({\shift*\s +cos(0)+2.2},{sin(90-30*\s)}) node[ sloped, scale = 1] {} ;
\draw [line width=0.5mm] ({\shift*(\s+1) +cos(60-30*\s)},{sin(60-30*\s)})  -- ({\shift*\s +cos(0)+2.2},{sin(60-30*\s)}) node[ sloped, scale = 1] {} ;
\draw [line width=0.5mm] ({\shift*(\s+1) +cos(30-30*\s)},{sin(30-30*\s)})  -- ({\shift*\s +cos(0)+2.2},{sin(30-30*\s)}) node[ sloped, scale = 1] {} ;
\draw [ line width=0.5mm] ({\shift*(\s+1) +cos(0-30*\s)},{sin(0-30*\s)})  --  ({\shift*\s +cos(0)+2.2},{sin(0-30*\s)}) node[ sloped, scale = 1] {} ;

}{}

\ifthenelse{\NOT 4 = \s \AND \NOT 0 = \s}{

    \node (z) at (\s*\shift,-1.7) [ draw = white, fill = white]  {$v_{i_{\s}}$};

 \draw [line width = 0.5mm,domain={120-30*\s}:{360-30*(\s)}] plot ({\shift * \s + 1.2*cos(\x)}, {1.2*sin(\x)});

 \draw [line width = 0.5mm]   ({\shift * \s + 1.2*cos(120-30*\s)}, {1.2*sin(120-30*\s)}) --  ({\shift * \s + cos(120-30*\s)}, {sin(120-30*\s)}) ;

 \draw [line width = 0.5mm]   ({\shift * \s + 1.2*cos(360-30*(\s))}, {1.2*sin(360-30*(\s))}) --  ({\shift * \s + cos(360-30*(\s))}, {sin(360-30*(\s))}) ;

	}{};

\ifthenelse{\NOT 0 = \s}{

\node (z) at ({(\s-0.5)*\shift},-1.7) [ draw = white, fill = white]  {$\Gamma_{\s}$};

	}{};

\draw[thick] (\shift*\s,0) circle [thick, radius=1];
\foreach \i in {0,1,2,3,9,10,11}
    {
		%\ifthenelse{0 = \i \OR \i = 1 \OR \i = 2}{
		%\node (z) at ({\shift*\s + 0.7*cos(30*\i)},{0.82*sin(30*\i)}) [ draw = white, fill = white, minimum size = 6pt,inner sep=0pt]  {$\i$};
		%}{};

		\node (z) at ({\shift*\s + cos(30*\i)},{sin(30*\i)}) [ draw = black, circle,fill=black,minimum size = 6pt,inner sep=0pt]  {};

    }
    \node (z) at ({\shift*\s +cos(90-30*\s)},{sin(90-30*\s)})  [ draw = black, circle,fill=gray!40!white,minimum size = 6pt,inner sep=0pt]  {};
    \node (z) at ({\shift*\s +cos(60-30*\s)},{sin(60-30*\s)})  [ draw = black, circle,fill=gray!40!white,minimum size = 6pt,inner sep=0pt]  {};
    \node (z) at ({\shift*\s +cos(30-30*\s)},{sin(30-30*\s)})  [ draw = black, circle,fill=gray!40!white,minimum size = 6pt,inner sep=0pt]  {};
        	
    \ifthenelse{\NOT 4=\s}{
        \node (z) at ({\shift*\s +cos(0-30*\s)},{sin(0-30*\s)})  [ draw = black, circle,fill=gray!40!white,minimum size = 6pt,inner sep=0pt] {};
        	
        \node (z) at ({\shift*(\s+1) +cos(90-30*\s)},{sin(90-30*\s)})  [ draw =black, circle,fill=gray!40!white,minimum size = 6pt,inner sep=0pt] {};
        %This line should create the first gray nodes on circle 2,3,4,5 but the gray node does not show. Don't know why...
    }{};

\ifthenelse{\NOT 0 = \s}
{

        \node (z) at ({\shift*\s + cos(-30*\s+120)},{sin(-30*\s+120)}) [ draw = black, circle,fill=gray!30!white,minimum size = 6pt,inner sep=0pt]  {};
}
{};
}
}
		\node (z) at ({ cos(30*8)},{sin(30*8)}) [ draw = black, circle,fill=black,minimum size = 6pt,inner sep=0pt]  {};

\end{tikzpicture}
\caption{ The figure shows an example of an odd shifting path consisting of four order-preserving paths $\Gamma_1, \Gamma_2,\Gamma_3, \Gamma_4$ with endpoints 
$v_0, v_{i_1}, v_{i_2}, v_{i_3}, v_m$. 
In the figure, we show four paths $P_0, P_1, P_2, P_3$ in the lift that are constructed by the process of Lemma \ref{lemma:billiardPaths}. Since each path $\Gamma_i$ is order preserving, the final order of the paths $P_0, P_1,P_2, P_3$ arriving at $v_{i_4}^{\sigma}$ is ultimately only affected by three single shifts that occur in fibres $v_{i_1}^{\sigma}$, $v_{i_2}^{\sigma}$, and $v_{i_3}^{\sigma}$, and therefore the final order of these paths in $v_{m}^{\sigma}$ is $P_1,P_2,P_3,P_0$. Also note that while $\sigma(v_1) = \sigma(v_2) = \sigma(v_3) = 1$ in for simplicity, the vertices $v_{i_1}, v_{i_2}, v_{i_3}$ can have any common voltage $r$ coprime to $n$ in our construction.}

\label{fig:odd_shifting_path}
\end{center}
\end{figure}

% \begin{figure}
%     \centering
%     \includegraphics[scale=0.4]{Order_Shifting.png}
%     \caption{An example of odd shifting path described above. The underlying group is $\mathbb{Z}_n$, and we suppose $d=8\ge 2\mc (\Gamma)$.
%     In this example, we also suppose that $g=0$. The three different shades of vertices represent the three components of the $2$-factor $F$. Moreover, the component containing $P_1$ and $P_2$ is highlighted in bold. {\LG I suppose we can delete this figure since we already have figure 4}}
%     \label{fig:example_factor}
% \end{figure}

The following theorem tells us that under certain reasonable conditions, the lift of an odd shifting path is Hamiltonian.

\begin{theorem} 
\label{thm:base_case}
For an integer $m \geq 2$, let $\Gamma = (v_1, \dots, v_m)$ be an 
odd shifting path with a voltage assignment $\sigma:A(\Gamma) \rightarrow \mathbb{Z}_n$ and an endpoint voltage $r$ coprime to $n$.
Let $\mathcal P = \{P_0,\dots,P_{d-1}\}$ be the family of paths constructed by the process of Lemma \ref{lemma:billiardPaths} using $d = 2 \mc (\Gamma)$
and an element $l \in \mathbb Z_n$. Then, $\Gamma^\sigma$ contains a Hamiltonian cycle $C$ and two edge subsets $E^L\subseteq E(v_1^\sigma)$ and $E^R\subseteq E(v_m^\sigma)$ such that
\begin{itemize}
\item
$C$ contains all edges of the paths $P_0,\dots,P_{d-1}$;
\item 
$|E^L|=|E^R|=d/2$; 
\item $E^L=\{[(v_1,l),(v_1,l+ r)],[(v_1,l+2r),(v_1,l+3r)],\dots,[(v_1,l+(d-2)r),(v_1,l+(d-1)r)]\}$; 
\item
$E^L$ and $E^R$ are alternating consecutive edge sets;
\item $C$ contains all edges of $v_1^\sigma$ except $E^L$ and $C$ contains all edges of $v_m^\sigma$ except $E^R$.
\end{itemize}
\end{theorem}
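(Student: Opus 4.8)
The plan is to take the Hamiltonian cycle $C$ to be exactly the $2$-factor $F$ produced by Lemma \ref{lemma2factor}, and then to use the odd shifting hypothesis to show that this $2$-factor is connected. Since $(\Gamma,\sigma)$ is an odd shifting path with endpoint voltage $r$ coprime to $n$, the hypotheses of Lemma \ref{lemma2factor} hold: we have $\sigma(v_1)=\sigma(v_m)=r$ with $\gcd(r,n)=1$, and $d=2\mc(\Gamma)$ is even. Applying that lemma with our chosen $l$ yields a $2$-factor $F$ and edge sets $E^L\subseteq E(v_1^\sigma)$, $E^R\subseteq E(v_m^\sigma)$ satisfying every listed bullet except connectivity; moreover $F$ is assembled from precisely the family $\mathcal P$ of Lemma \ref{lemma:billiardPaths} (the billiard strategy is determined by $d$, $l$, $r$), so $F$ contains all edges of $P_0,\dots,P_{d-1}$. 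As in the proof of Lemma \ref{lemma2factor}, I would relabel group elements so that $r=1$. It then remains only to prove that $F$ is a single cycle, since a connected $2$-factor is exactly a Hamiltonian cycle and the remaining properties are inherited directly.

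To count the components of $F$, I would suppress the degree-two non-endpoint vertices of $v_1^\sigma$ and $v_m^\sigma$ (the long arcs removed by $E^L$ and $E^R$ leave all interior fiber vertices with both fiber-edges present), which preserves the cycle structure. What remains is the union of two perfect matchings on the $2d$ path-endpoints: a matching $B$ given by the paths, joining the endpoint of $P_i$ in $v_1^\sigma$ to its endpoint in $v_m^\sigma$, and a matching $A=M_1\cup M_m$ coming from the fiber edges of $F$ at the two ends. Indexing the endpoints $0,1,\dots,d-1$ along each end cycle, both $M_1$ and $M_m$ realize the same involution $\mu$, which pairs $\{1,2\},\{3,4\},\dots,\{d-3,d-2\}$ and wraps $\{d-1,0\}$; concretely $\mu(i)=i+1$ for odd $i$ and $\mu(i)=i-1$ for even $i$, modulo $d$. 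Because $\Gamma$ is odd shifting, $\pi(v_{m-1})$ is a cyclic shift, so $B$ connects endpoint $i$ of $v_1^\sigma$ to endpoint $\tau(i)=i+s\pmod d$ of $v_m^\sigma$ for some odd $s$.

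The number of cycles of $F$ equals half the number of orbits of $A\circ B$, and the orbits of $A\circ B$ correspond bijectively to the orbits of the return map $\psi=\mu\circ\tau^{-1}\circ\mu\circ\tau$ on $\{0,\dots,d-1\}$; thus $F$ is connected if and only if $\psi$ has exactly two orbits. The crux is the parity computation for $\psi$. Writing $\mu(i)=i+\epsilon(i)$ with $\epsilon(i)=+1$ for odd $i$ and $\epsilon(i)=-1$ for even $i$, and using that $s$ is \emph{odd} so that $i$ and $i+s$ always have opposite parity, a direct substitution collapses the four compositions to $\psi(i)=i-2\epsilon(i)$. This map preserves parity and acts as a single $(d/2)$-cycle on the even residues and a single $(d/2)$-cycle on the odd residues, so $\psi$ has exactly two orbits. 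Hence $F$ is a single cycle, giving the Hamiltonian cycle $C$, and all its required properties come from Lemma \ref{lemma2factor}.

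I expect the main obstacle to be setting up this reduction correctly: pinning down that both end matchings induce the \emph{same} involution $\mu$ once endpoints are indexed consistently, and handling the bookkeeping in the ``half the orbits'' step (a single $2k$-cycle of $F$ splits into two orbits of $A\circ B$). The parity collapse itself is short but essential, and it is exactly where oddness of $s$ is used: for even $s$ the identical computation gives $\psi=\id$ and $d/2$ separate cycles, which is precisely why the odd shifting hypothesis is needed. Finally I would dispose of the degenerate case $d=2$ (in particular $m=2$), where $F$ is automatically a single cycle regardless of $s$, so the statement holds there as well.
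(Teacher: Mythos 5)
Your proposal is correct and is in substance the same as the paper's proof: the paper builds exactly the same $2$-factor (it inlines the construction of Lemma \ref{lemma2factor} rather than citing it) and proves connectivity by noting that, because $s$ is odd, the end pairings join $P_i$ to $P_{i+1}$ in $v_1^\sigma$ for odd $i$ and in $v_m^\sigma$ for even $i$, so the resulting cycle visits $P_0, \dots, P_{d-1}$ in order. Your orbit computation $\psi(i) = i - 2\epsilon(i)$, with oddness of $s$ forcing the parity flip, is precisely this interleaving argument recast in permutation form.
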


\begin{proof}
As discussed before, we may assume without loss of generality that $r = 1$ and $l = 0$. 
Let $P_0,\dots,P_{d-1}$ be the $d$ paths produced by the extended billiard strategy described in Lemma \ref{lemma:billiardPaths}.

%Now we construct a Hamiltonian cycle $C$ that contains all edges of the paths $P_0,\dots,P_{d-1}$. 
%By Lemma \ref{lemma:billiardPaths} and applying an appropriate automorphism, we may assume $l=0$ and let the $d$ paths $P_0,P_1,\dots,P_{d-1}$ start at $(v_1,0),\dots,(v_1,d-1)$, respectively. 
%It follows from the discussion above that 
Since $(\Gamma, \sigma)$ is an odd shifting path,
the paths $P_0,\dots,P_{d-1}$ arrive in $v_m^{\sigma}$ respectively at the voltages
\[g + (d-s),g + (d-s+1), \dots, g + (d-1), g , g + 1, \dots, g + (d-s-1)\]
 for some element $g \in \mathbb Z_n$ and some odd number $0 < s < d$.
%\[(v_m,-2k+1),(v_m,-2k+2),\dots,(v_m,-2k+d).\]
%{\PB This isn't even true. We need to apply the odd shift.}
%{\PB for some value $k \in \mathbb Z_n$.}
We now obtain a Hamiltonian cycle $C$
on $\Gamma^{\sigma}$
from $P_0\cup P_1\cup\dots\cup P_{d-1}$ by adding the following edges:
\begin{itemize}
    \item all edges in the unique perfect matching on the vertex set $\{(v_1,i):1\leq i\leq d-2\}$ in $v_1^\sigma$;
    \item all edges in the unique perfect matching on the vertex set 
    $\{(v_m,g + i):1\leq i\leq d-2\}$ ;
    %in $v_m^\sigma$;
    \item all edges of the path $(v_1,d-1),(v_1,d),\dots,(v_1,n-1),(v_1,0)$;
    \item all edges of the path $(v_m,g  + d-1),(v_m,g + d),\dots,(v_m,g + n  -1),(v_m,g)$.
\end{itemize}
We note that for each odd value $1 \leq i \leq d-1$, the endpoints of $P_i$ and $P_{i+1}$ in $v_1^{\sigma}$ are joined by a path in $C \cap E(v_1^{\sigma})$ (with $d$ and $0$ identified). Furthermore, 
since $(\Gamma, \sigma)$ is an odd shifting path, 
for each even value $0 \leq i \leq d-2$, the endpoints of $P_i$ and $P_{i+1}$ in $v_m^{\sigma}$ are joined by a path in $C \cap E(v_m^{\sigma})$.
Hence, $C$ is a Hamiltonian cycle that, starting in $v_1^{\sigma}$, visits $P_0, P_1, \dots, P_{d-1}$ in order.

Finally, we let $E^L$ be the unique perfect matching on $\{(v_1,i):0\leq i \leq d-1\}$ in $v_1^\sigma$ which is not in $C$. Similarly we let $E^R$ be the unique perfect matching on $\{(v_m,g + i):0\leq i\leq d-1\}$ in $v_m^\sigma$. We observe that both $E^L$ and $E^R$ are alternating consecutive edge sets. This completes the proof.
\end{proof}

% \begin{figure}[h]
%     \centering
%     \includegraphics[scale=0.5]{Figure_Thm3_1_Gray.png}
%     \caption{A voltage graph that is a tree that satisfies the conditions in Theorem 3.1 and can be decomposed into one base path and two base spiders. The number beside each vertex denotes the voltage assignment of its loop. If a vertex has no number besides it, it means that its loop can be any voltage within $\mathbb{Z}_n$, where $3$ and $11$ are coprime to $n$. }
%     \label{fig:decompositionNeededNewLabelToMakeTheErrorMessageGoAway}
% \end{figure}

%\begin{definition}
%We define certain vertices of a tree $T$ as \emph{joints}, and each joint will receive a \emph{weight}. We denote the weight of a vertex $v \in V(T)$ as $\omega_T(v)$. 
%\end{definition}

%\begin{definition}

Next, we show that if multiple odd shifting paths are joined together to form a tree, then the lift of this tree is often Hamiltonian.
Let $T$ be a reflexive tree with a voltage assignment $\sigma:A(T) \rightarrow \mathbb{Z}_n$, and $L$ be its set of loops. If there exists a system of odd shifting paths $\{Q_1,\dots,Q_k\}$ such that $\{E(Q_1) \setminus L,\dots E(Q_k) \setminus L\}$ is a partition of $E(T)\setminus L$, and if the paths $Q_i$ and $Q_j$ are strictly internally vertex disjoint\footnote{Two paths P and Q are strictly internally vertex disjoint if each vertex in $V(P) \cap V(Q)$ is an endpoint of both $P$ and $Q$.} for any $i\neq j$, then we say that $T$ can be \emph{odd shifting decomposed}, and we say that $\{Q_1,\dots,Q_k\}$ is the \emph{odd shifting decomposition} of $T$. Observe that in an odd shifting decomposition $\{Q_1, \dots, Q_k\}$, since the odd shifting paths only intersect at their endpoints, the paths $Q_1, \dots, Q_k$ all have the same endpoint voltage.

Let $Q_i$ 
be a path with endpoints $u$ and $v$. We assign \emph{weight} from $Q_i$ to $u$ and $v$, and we write $\omega_{Q_i}(u)=\omega_{Q_i}(u)=2\mc(Q_i)$ for this weight.
If a vertex $v\in V(T)$ is an endpoint of a path in $\{Q_1,\dots,Q_k\}$, we call $v$ a \emph{joint} of $T$. We define the \emph{tree weight} of $v$ in $T$ as $\Omega_T(v)=\sum \omega_{Q_i}(v)$, where the sum is over all paths in $\{Q_1,\dots,Q_k\}$ for which $v$ is an endpoint.
%\end{definition}
We say $(T,\sigma)$ is \emph{properly weighted} if $\Omega_T(v)\leq n$ (the order of the group $\mathbb Z_n$) holds for each joint $v$ of $T$.

Finally, we have the following sufficient condition for the existence of a Hamiltonian cycle in the lift of a tree.

\begin{theorem}
\label{thm:decomp_thm}
Let $T$ be a reflexive tree and let $\sigma:A(T) \rightarrow \mathbb{Z}_n$ be a voltage assignment on $T$. Suppose the voltage graph $(T,\sigma)$ can be odd shifting decomposed into paths $\{Q_1,\dots,Q_k\}$ whose common endpoint voltage is coprime to $n$.
%and that for all $i$, the endpoints of $Q_i$ have the same voltage assignment that is co-prime to $n$. 
If $(T,\sigma)$ is properly weighted, then $T^{\sigma}$ is Hamiltonian.
\end{theorem}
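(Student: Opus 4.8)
The plan is to construct a connected $2$-factor of $T^\sigma$ directly, assembling it from the Hamiltonian cycles that Theorem \ref{thm:base_case} produces on the lift of each path $Q_i$ and gluing these together at the fibers over the joints of $T$. Write $d_i = 2\mc(Q_i)$. First I would fix, at every joint $v$, a placement of pairwise disjoint \emph{blocks} of consecutive voltages in the $n$-cycle $v^\sigma$ (recall the common endpoint voltage is coprime to $n$, so every joint fiber is a single $n$-cycle): for each path $Q_i$ having $v$ as an endpoint I reserve a block $B_i^v$ of size $d_i$. Since $(T,\sigma)$ is properly weighted, $\sum_i d_i = \Omega_T(v) \le n$, so such disjoint blocks exist. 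For each path $Q_i$ I then apply Theorem \ref{thm:base_case} with $d = d_i$ and with the parameter $l$ chosen so that the alternating consecutive edge set $E^L$ is exactly the matching on $B_i^v$; this yields the billiard paths $P_0^i,\dots,P_{d_i-1}^i$ (which by Corollary \ref{corInclude} already cover every internal fiber of $Q_i$) attaching to $v^\sigma$ at the vertices of $B_i^v$.

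The coordination of these placements is the first thing to get right, because Theorem \ref{thm:base_case} lets us choose the block freely only at the first endpoint $v_1$; the block at $v_m$ is then forced by the shift $g$. I would resolve this by rooting the tree of joints and orienting every path from its parent joint toward its child joint, taking the parent as $v_1$. At each joint $u$ the block coming from the edge to $u$'s parent is then the only forced one, and the blocks for the paths descending to $u$'s children are placed, using the free parameter $l$, inside the complementary arc of $u^\sigma$; this arc has length $n$ minus the size of the forced block, which by proper weighting is at least the total size of the descending blocks, so they fit.

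Having placed everything, I would define the candidate $2$-factor $F$ to consist of all billiard paths together with, at each joint $v$, the $v^\sigma$-edges given by the interior alternating matching inside each block and the full connecting arc across each gap between consecutive blocks. A direct degree check shows $F$ is $2$-regular and spanning (internal fibers are covered by the billiard paths and joint fibers by this routing). The substantive step is to prove that $F$ is connected, which I would do by induction on the number $k$ of paths. For $k = 1$ this is exactly Theorem \ref{thm:base_case}. For the inductive step, choose a path $Q_k$ whose child endpoint $u$ is a leaf of the joint tree, let $v$ be its parent endpoint, and let $F'$ be the analogous $2$-factor of the lift of $T' = T - (Q_k \setminus \{v\})$; by induction $F'$ is a single cycle, and at $v$ the region reserved for $B_k^v$ appears in $F'$ as a gap traversed by a single arc.

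The key point is that the contribution of $Q_k$ to $F$ is a single path $R$ joining the two boundary vertices of $B_k^v$. This is where the odd shifting hypothesis is essential: by Theorem \ref{thm:base_case} the full lift of $Q_k$ carries a Hamiltonian cycle $C_k$ (it is the odd shift that fuses what would otherwise be the two components of the $2$-factor of Lemma \ref{lemma2factor} into one cycle), and deleting from $C_k$ the single contiguous arc it runs through the rest of $v^\sigma$ leaves precisely the path $R$, which covers all private vertices of $Q_k$ and the vertices of $B_k^v$. Splicing $R$ into $F'$ in place of the straight arc across the gap at $B_k^v$ replaces one sub-arc of the cycle $F'$ by a path with the same two ends, so $F$ remains a single cycle. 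I expect this connectivity argument, together with the bookkeeping that makes the block placements consistent at every shared joint, to be the main obstacle; once it is in place, $F$ is a connected spanning $2$-regular subgraph, i.e.\ a Hamiltonian cycle of $T^\sigma$.
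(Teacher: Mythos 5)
Your proposal is correct and is essentially the paper's own argument: the paper likewise inducts on $k$ by peeling off a path $Q_k$ meeting the rest at one joint $u$, applies Theorem \ref{thm:base_case} with $l$ chosen so that $E_k^L$ is an alternating matching inside fiber edges of the inductively-obtained cycle, and splices $C_k$ minus its arc through the rest of $u^{\sigma}$ in place of that stretch. The only difference is bookkeeping: where you reserve disjoint blocks of size $2\mc(Q_i)$ at every joint up front by rooting the joint tree, the paper instead carries the inductive invariant that the current Hamiltonian cycle contains a path of at least $n-\Omega_T(v)$ consecutive fiber edges at each joint $v$, which (via proper weighting) guarantees exactly the same room for each successive splice.
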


\begin{proof}
As discussed before, we may relabel the elements of our group $G$ so that our paths $Q_i$ all have endpoint voltage $1$.
Our proof is by induction on $k$, the number of odd shifting paths in the decomposition of $T$. We prove the stronger statement that there exists a Hamiltonian cycle $C$ in $T^{\sigma}$ that satisfies the following property:
\begin{enumerate}[(1)]
\item \label{IH} For each joint vertex $v\in V(T)$, there exists a path $P$ with $E(P) \subseteq E(C)\cap E(v^{\sigma})$
such that
 \[|E(P)|\geq n-\Omega_T(v).\]
\end{enumerate}

For the base case, when $k=1$, $T$ is a path with two joints $u$ and $v$. Theorem \ref{thm:base_case} implies that there exists a Hamiltonian cycle $C$ in $\Gamma^\sigma$ along with two alternating consecutive edge sets $E^R\subseteq E(v^\sigma)$ and $E^L\subseteq E(u^\sigma)$ 
of size at most $c(\Gamma)$
such that $C$ contains all edges of $u^\sigma$ and $v^\sigma$ except $E^R$ and $E^L$. Therefore, 
$E(C)\cap E(v^{\sigma})$
contains a path of at least $n- 2\mc(T)=n-\Omega_T(v)$ edges; that is, \ref{IH} holds for $v$. By a similar argument, \ref{IH} holds for $u$.

For the inductive step, suppose that \ref{IH} holds for all values up to $k-1$.
Let $T$ be a tree with a corresponding system $\{Q_1, \dots, Q_k\}$ of odd shifting paths that satisfies the conditions of the theorem. 
Since $\{Q_1, \dots, Q_k\}$ is an edge-partition of the tree $T$, there exists at least one odd shifting path, without loss of generality $Q_k$, that intersects $\bigcup_{i=1}^{k-1}V(Q_i)$ at only one of its endpoints. Observe that
$$T_0 := \bigcup_{i =1}^{k-1} Q_i$$
is connected and hence a tree.
Moreover, $\{Q_1,\dots,Q_{k-1}\}$ is an odd shifting decomposition of $T_0$, and $T_0$ is properly weighted.
%Note that 
%$|V(Q_k) \cap V(T_0)| = 1$, and since 
%removing $S_k$ can break the connectivity of $T$.
Let $u$ be the vertex where $Q_k$ is joined to $T_0$---that is, $\{u\} = V(Q_k) \cap V(T_0)$.
%be an endpoint of $Q_k$
Furthermore, let
$$\Omega_{T_0}(u) = \sum_{\substack{\{i \in [1,k-1]: \\ u\in V(Q_i)\}}} \omega_{Q_i}(u).$$
%and $\Omega_{T_0}(u)$ be the total weight $u$ received in $T_0$. 
By the induction hypothesis, $T_0^\sigma$ has a Hamiltonian cycle $C_0$, and for each joint vertex of $T_0$, there exists a path which satisfies \ref{IH}. In particular, we can find a path 
\[P=(e_1,e_2,\dots,e_s)\subseteq E(C_0)\cap E(u^{\sigma}),\]
where $s\geq n-\Omega_{T_0}(u)$. %Moreover, for each $i=1,\dots,s-1$, $e_i$ and $e_{i+1}$ are incident edges in $C_0$.

Now, recall that $u$ is the endpoint of $Q_k$ in $T_0$, and let $v\neq u$ be the other endpoint of $Q_k$. According to Theorem \ref{thm:base_case}, $Q_k^\sigma$ contains a Hamiltonian cycle $C_k$  along with two alternating consecutive edge sets $E_k^L \subseteq E(u^{\sigma})$ and $E_k^R \subseteq E(v^{\sigma})$ of size $c(Q_k)$, for which 
\[E(C_k) \cap u^{\sigma} = u^{\sigma} \setminus E_k^L \textrm{ and } E(C_k) \cap v^{\sigma} = u^{\sigma} \setminus E_k^R.\] 
Now, we show that we can combine $C_0$ and $C_k$ to a Hamiltonian cycle $C$ in $T^\sigma$, such that $C$ also satisfies the statement \ref{IH}.
%Let $E_k^L\subseteq E(u^\sigma)$ and $E_k^R\subseteq E(v^\sigma)$ be the two alternating consecutive edge subsets that are not in $C_k$. 
By our assumption,
$\omega_{Q_k}(u)=2\mc(Q_k)$,
and $n-\Omega_{T_0}(u)\geq \omega_{Q_k}(u)$.
Since 
\[s \geq n-\Omega_{T_0}(u)\geq 2\mc(Q_k) = |E_k^L|,\]
 by choosing an appropriate value for $l$ in Theorem \ref{thm:base_case} such that $(u,l)$ is an endpoint of $e_1$ but not of $e_2$, we observe that $E_k^L = \{e_1,e_3,\dots,e_{2\mc(Q_k)-1}\}$ in $C_k$. Without loss of generality, we assume that $l = 0 $, so that the endpoints of each edge $e_i \in P$ are $(u,i-1)$ and $(u,i)$.
 
 We define $A_0= C_0\setminus P$, and we see that $A_0$ is a Hamiltonian path on $T_0^{\sigma} \setminus P$ with endpoints $(u,0)$ and $(u,s)$. Similarly, we define $A_k = C_k \setminus  (E(u^{\sigma}) \setminus P)$,
 and we see that $A_k$ is a Hamiltonian path 
 on the graph 
 $Q_k^{\sigma} \setminus (E(u^{\sigma}) \setminus P)$
 with endpoints $(u,0)$ and $(u,s)$. 
 Therefore, $V(A_0) \cup V(A_k) = V(T^{\sigma})$, and since $A_0$ and $A_k$ only intersect at their endpoints, it follows that $C = A_0 \cup A_k$ is a Hamiltonian cycle on $T^{\sigma}$. See Figure \ref{fig:example_factor} for an example of this construction.
 %By connecting the endpoints of $P_0$ and $P_k$ respectively, we obtain a Hamiltonian cycle $C$.

Now, we show that $C$ satisfies \ref{IH}.
After constructing $C$, we observe that 
$E(C) \cap v^{\sigma} = v^{\sigma} \setminus E_k^R$.
Since $E_k^R$ is an alternating consecutive edge set of size $\mc(Q_k)$, $E(v^\sigma)\cap E(C)$ contains a path of at least $n - 2\mc(Q_k) = n - \Omega_T(v)$ edges.
Next, we
consider $E(u^\sigma)\cap E(C)$. When constructing $C$ from $C_k$, only the edge set $\{e_1,e_3,\dots,e_{2\mc(Q_k)-1}\}$ was deleted from $C_0$,
so by the induction hypothesis, $u^{\sigma} \cap E(C)$ still has a path $(e_{2\mc(Q_k)}, \dots, e_s)$ with
%$\{e_1,e_2,\dots,e_{2\mc(Q_k)}\}$ are not consecutive edge set. Thus we have at least 
$$r-2\mc(Q_k) + 1 >  (n-\Omega_{T_0}(u))-\omega_{Q_k}(u)=n-\Omega_T(u)$$
edges.
Hence, both joint vertices satisfy the condition \ref{IH}, and the condition \ref{IH} has not changed for the remaining joint vertices of $T$. Therefore, \ref{IH} holds for all vertices of $T$, and induction is complete. 
\end{proof}

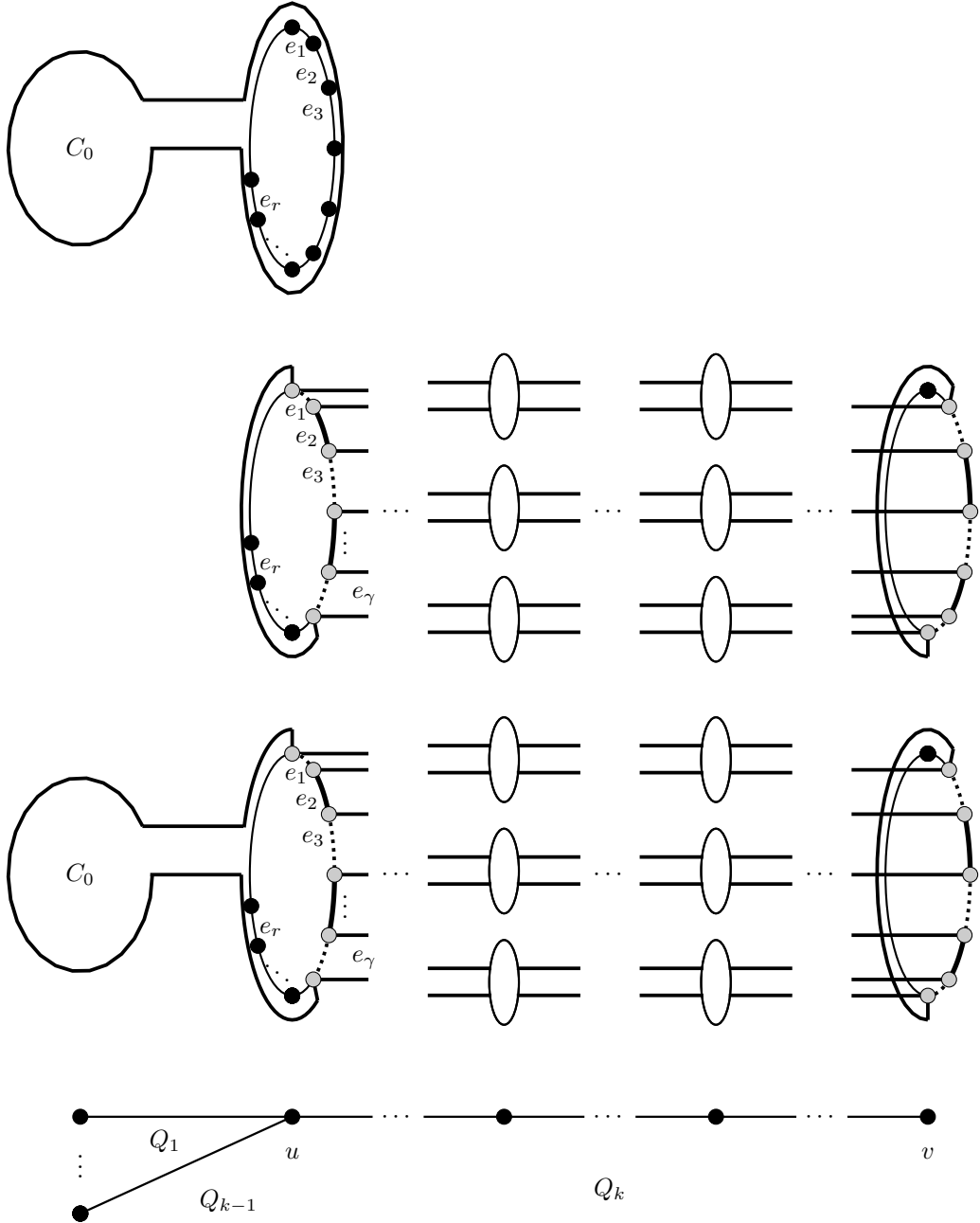
\begin{figure}[] 

    \begin{center}
        \begin{tikzpicture}
        [scale=1.7,xscale = 0.35, yscale = 1]
        
        %labels for paths
        % \node (z) at (5.5,1.2) [ draw = white, fill = white]  {$P_3$};
        \node (z) at (5.1,0.816) [ draw = white, fill = white]  {$e_1$};
        \node (z) at (5.35,0.6) [ draw = white, fill = white]  {$e_2$};
        \node (z) at (5.5,0.3) [ draw = white, fill = white]  {$e_3$};
        
        \node (z) at (4.5,-0.45) [ draw = white, fill = white]  {$e_r$};
        \node (z) at (6.7,-0.7) [ draw = white, fill = white]  {$e_{\gamma}$};
        \node (z) at (6.25,-0.2) [ draw = white, fill = white]  {$\vdots$};
        \node (z) at (4.65,-0.75) [ draw = white, fill = white]  {$\ddots$};
        \node (z) at (0,0) [ draw = white, fill = white]  {$C_0$};
        
        %Line 2
        \node (z) at (5.1,3.816) [ draw = white, fill = white]  {$e_1$};
        \node (z) at (5.35,3.6) [ draw = white, fill = white]  {$e_2$};
        \node (z) at (5.5,3.3) [ draw = white, fill = white]  {$e_3$};
        
        \node (z) at (4.5,-0.45+3) [ draw = white, fill = white]  {$e_r$};
        \node (z) at (6.7,-0.7+3) [ draw = white, fill = white]  {$e_{\gamma}$};
        \node (z) at (6.25,-0.2+3) [ draw = white, fill = white]  {$\vdots$};
        \node (z) at (4.65,-0.75+3) [ draw = white, fill = white]  {$\ddots$};
        
        %line 3
        \node (z) at (5.1,6.816) [ draw = white, fill = white]  {$e_1$};
        \node (z) at (5.35,6.6) [ draw = white, fill = white]  {$e_2$};
        \node (z) at (5.5,6.3) [ draw = white, fill = white]  {$e_3$};
        
        \node (z) at (4.5,-0.45+6) [ draw = white, fill = white]  {$e_r$};
        
        \node (z) at (4.65,-0.75+6) [ draw = white, fill = white]  {$\ddots$};
        \node (z) at (0,6) [ draw = white, fill = white]  {$C_0$};
        
        %edges on top
        \draw [line width=0.3mm] (0,-2)  -- (20,-2)   ;
        \draw [line width=0.3mm] (0,-2.8)  -- (5,-2)   ;

        \foreach \shift in {5}
        {
        \foreach \s in {0,1,2,3,4}
        {
        
        %vertices on top path
        \node (z) at (\s*\shift,-2)  [ draw = black, circle,fill=black,minimum size = 6pt,inner sep=0pt] {}; %I changed the color here, sorry if that was wrong
        \node (z) at (0,-2.8)  [ draw = black, circle,fill=black,minimum size = 6pt,inner sep=0pt] {}; %Here too
        %label of vertex on top path
        \node (z) at (\shift,-2.3) [ draw = white, fill = white]  {$u$};
        \node (z) at (4*\shift,-2.3) [ draw = white, fill = white]  {$v$};

        \ifthenelse{\NOT 4 = \s \AND \NOT 0=\s}{
        %dots on top edges
        \node (z) at ({(\s+0.5)*\shift},-2) [ draw = white, fill = white]  {$\cdots$};
        
        %outgoing directed line
        \foreach \k in {0,1}{
        \ifthenelse{1=\s}{

		%Here be the arcs outgoing from the fibre above $u$ 
        \draw [line width=0.5mm] ({\shift*\s +cos(90-30*(\s-1))},{sin(90-30*(\s-1))+3*\k})  -- ({\shift*\s +cos(0)+0.8},{sin(90-30*(\s-1))+3*\k}) node[ sloped, scale = 1] {} ;
        \draw [line width=0.5mm] ({\shift*\s +cos(60-30*(\s-1))},{sin(60-30*(\s-1))+3*\k})  -- ({\shift*\s +cos(0)+0.8},{sin(60-30*(\s-1))+3*\k}) node[sloped, scale = 1] {} ;
        \draw [line width=0.5mm] ({\shift*\s +cos(30-30*(\s-1))},{sin(30-30*(\s-1))+3*\k})  -- ({\shift*\s +cos(0)+0.8},{sin(30-30*(\s-1))+3*\k}) node[ sloped, scale = 1] {} ;
        \draw [ line width=0.5mm] ({\shift*\s +cos(0-30*(\s-1))},{sin(0-30*(\s-1))+3*\k})  --  ({\shift*\s +cos(0)+0.8},{sin(0-30*(\s-1))+3*\k}) node[sloped, scale = 1] {} ;
        \draw [ line width=0.5mm] ({\shift*\s +cos(0-30*(\s))},{sin(0-30*(\s))+3*\k})  --  ({\shift*\s +cos(0)+0.8},{sin(0-30*(\s))+3*\k}) node[sloped, scale = 1] {} ;
        \draw [ line width=0.5mm] ({\shift*\s +cos(0-30*(\s+1))},{sin(0-30*(\s+1))+3*\k})  --  ({\shift*\s +cos(0)+0.8},{sin(0-30*(\s+1))+3*\k}) node[ sloped, scale = 1] {} ;
        }
        {

        %\draw [line width=0.5mm] ({\shift*\s +0.3},{0.2+3*\k})  -- ({\shift*\s +cos(0)+0.8},{0.2+3*\k}) node[currarrow, pos=0.5, sloped, scale = 1] {} ;
        %\draw [ line width=0.5mm] ({\shift*\s +0.3)},{0+3*\k})  --  ({\shift*\s +cos(0)+0.8},{0+3*\k}) node[currarrow, pos=0.5, sloped, scale = 1] {} ;
        
        %\draw [line width=0.5mm] ({\shift*\s +0.3},{-1+3*\k})  -- ({\shift*\s +cos(0)+0.8},{-1+3*\k}) node[currarrow, pos=0.5, sloped, scale = 1] {} ;
        %\draw [ line width=0.5mm] ({\shift*\s +0.3},{-1.2+3*\k})  --  ({\shift*\s +cos(0)+0.8},{-1.2+3*\k}) node[currarrow, pos=0.5, sloped, scale = 1] {} ;
        };
        
        %dots between circles

        \node (z) at ({\shift*\s +cos(0)+1.5},{3*\k}) [draw = white, fill = white]  {$\cdots$};

        %incoming directed line
        \ifthenelse{ 3 =\s}{
        
		%Here be the arcs incoming to the fibre above $v$
        \draw [line width=0.5mm] ({\shift*(\s+1) +cos(90-30*(\s-2))},{sin(90-30*(\s-2))+3*\k})  -- ({\shift*\s +cos(0)+2.2},{sin(90-30*(\s-2))+3*\k}) node[ sloped, scale = 1] {} ;
        \draw [line width=0.5mm] ({\shift*(\s+1) +cos(90-30*\s)},{sin(90-30*(\s-1))+3*\k})  -- ({\shift*\s +cos(0)+2.2},{sin(90-30*(\s-1))+3*\k}) node[ sloped, scale = 1] {} ;
        \draw [line width=0.5mm] ({\shift*(\s+1) +cos(90-30*\s)},{sin(90-30*\s)+3*\k})  -- ({\shift*\s +cos(0)+2.2},{sin(90-30*\s)+3*\k}) node[ sloped, scale = 1] {} ;
        \draw [line width=0.5mm] ({\shift*(\s+1) +cos(60-30*\s)},{sin(60-30*\s)+3*\k})  -- ({\shift*\s +cos(0)+2.2},{sin(60-30*\s)+3*\k}) node[ sloped, scale = 1] {} ;
        \draw [line width=0.5mm] ({\shift*(\s+1) +cos(30-30*\s)},{sin(30-30*\s)+3*\k})  -- ({\shift*\s +cos(0)+2.2},{sin(30-30*\s)+3*\k}) node[ sloped, scale = 1] {} ;
        \draw [ line width=0.5mm] ({\shift*(\s+1) +cos(0-30*\s)},{sin(0-30*\s)+3*\k})  --  ({\shift*\s +cos(0)+2.2},{sin(0-30*\s)+3*\k}) node[ sloped, scale = 1] {} ;
        
        }
        {
        
        %\draw [line width=0.5mm] ({\shift*(\s+1)         -2},{sin(60)+0.3+2.95*\k})  -- ({\shift*(\s+1)-0.3},{sin(60)+0.3+2.95*\k}) node[currarrow, pos=0.5, sloped, scale = 1] {} ;
        %\draw [line width=0.5mm] ({\shift*(\s+1) -2},{sin(40)+0.3+2.95*\k})  -- ({\shift*(\s+1) -0.3},{sin(40)+0.3+2.95*\k}) node[currarrow, pos=0.5, sloped, scale = 1] {} ;
        
        %\draw [line width=0.5mm] ({\shift*(\s+1) -2},{0.2+3*\k})  -- ({\shift*(\s+1) -0.3},{0.2+3*\k}) node[currarrow, pos=0.5, sloped, scale = 1] {} ;
        %\draw [ line width=0.5mm] ({\shift*(\s+1) -2)},{0+3*\k})  --  ({\shift*(\s+1) -0.3},{0+3*\k}) node[currarrow, pos=0.5, sloped, scale = 1] {} ;
        
        %\draw [line width=0.5mm] ({\shift*(\s+1) -2},{-1+3*\k})  -- ({\shift*(\s+1)-0.3},{-1+3*\k}) node[currarrow, pos=0.5, sloped, scale = 1] {} ;
        %\draw [ line width=0.5mm] ({\shift*(\s+1)-2},{-1.2+3*\k})  --  ({\shift*(\s+1)-0.3},{-1.2+3*\k}) node[currarrow, pos=0.5, sloped, scale = 1] {} ;
        };
        
        }
        
        }{}
        
        \ifthenelse{  1 = \s}{
        
        %circle bold line around each fibre
         \draw [line width = 0.5mm,domain={120-30*\s}:{191-30*(\s)}] plot ({\shift * \s + 1.2*cos(\x)}, {1.2*sin(\x)});
         \draw [line width = 0.5mm,domain={210-30*\s}:{330-30*(\s)}] plot ({\shift * \s + 1.2*cos(\x)}, {1.2*sin(\x)});
         
        %line 2
         \draw [line width = 0.5mm,domain={120-30*\s}:{330-30*(\s)}] plot ({\shift * \s + 1.2*cos(\x)}, {1.2*sin(\x)+3});
        
        %line 3
         \draw [line width = 0.5mm,domain={191-30*\s}:{-150-30*(\s)}] plot ({\shift * \s + 1.2*cos(\x)}, {1.2*sin(\x)+6});
        
        %connecting part
         \draw [line width = 0.5mm]   ({\shift * \s + 1.2*cos(120-30*\s)}, {1.2*sin(120-30*\s)}) --  ({\shift * \s + cos(120-30*\s)}, {sin(120-30*\s)}) ;
        
         \draw [line width = 0.5mm]   ({\shift * \s + 1.2*cos(360-30*(\s+1))}, {1.2*sin(360-30*(\s+1))}) --  ({\shift * \s + cos(360-30*(\s+1))}, {sin(360-30*(\s+1))}) ;
         
         %line 2
          \draw [line width = 0.5mm]   ({\shift * \s + 1.2*cos(120-30*\s)}, {1.2*sin(120-30*\s)+3}) --  ({\shift * \s + cos(120-30*\s)}, {sin(120-30*\s)+3}) ;
        
         \draw [line width = 0.5mm]   ({\shift * \s + 1.2*cos(360-30*(\s+1))}, {1.2*sin(360-30*(\s+1))+3}) --  ({\shift * \s + cos(360-30*(\s+1))}, {sin(360-30*(\s+1))+3}) ;
        
        	}{};
        
        \ifthenelse{4=\s}{
         \draw [line width = 0.5mm,domain={60}:{270}] plot ({\shift * \s + 1.2*cos(\x)}, {1.2*sin(\x)});
        
         \draw [line width = 0.5mm]   ({\shift * \s + 1.2*cos(120-30*(\s-2))}, {1.2*sin(120-30*(\s-2))}) --  ({\shift * \s + cos(120-30*(\s-2))}, {sin(120-30*(\s-2))}) ;
        
         \draw [line width = 0.5mm]   ({\shift * \s + 1.2*cos(360-30*(\s-1))}, {1.2*sin(360-30*(\s-1))}) --  ({\shift * \s + cos(360-30*(\s-1))}, {sin(360-30*(\s-1))}) ;
         
         %line 2
         
          \draw [line width = 0.5mm,domain={60}:{270}] plot ({\shift * \s + 1.2*cos(\x)}, {1.2*sin(\x)+3});
        
         \draw [line width = 0.5mm]   ({\shift * \s + 1.2*cos(120-30*(\s-2))}, {1.2*sin(120-30*(\s-2))+3}) --  ({\shift * \s + cos(120-30*(\s-2))}, {sin(120-30*(\s-2))+3}) ;
        
         \draw [line width = 0.5mm]   ({\shift * \s + 1.2*cos(360-30*(\s-1))}, {1.2*sin(360-30*(\s-1))+3}) --  ({\shift * \s + cos(360-30*(\s-1))}, {sin(360-30*(\s-1))+3}) ;
         }{};

        \ifthenelse{\NOT 0 = \s}{
        
        %Q label above edge
        \node (z) at ({(3-0.5)*\shift},-2.6) [ draw = white, fill = white]  {$Q_{k}$};
        \node (z) at ({(1-0.6)*\shift},-2.2) [ draw = white, fill = white]  {$Q_{1}$};
        \node (z) at ({(0.7)*\shift},-2.7) [ draw = white, fill = white]  {$Q_{k-1}$};
        \node (z) at (0,-2.35) [ draw = white, fill = white]  {$\vdots$};
        
        }
        {
        %bold line around C_0 fibre
        \draw [line width=0.5mm] ({\shift*\s +cos(30-30*(\s))+0.6},{sin(30-30*(\s))-0.1})  -- ({\shift*\s +cos(0)+2.85},{sin(30-30*(\s))-0.1}) node[ pos=0.5, sloped, scale = 1] {} ;
        \draw [ line width=0.5mm] ({\shift*\s +cos(0-30*(\s))+0.65},{sin(0-30*(\s))})  --  ({\shift*\s +cos(0)+2.8},{sin(0-30*(\s))}) node[ pos=0.5, sloped, scale = 1] {} ;

        \draw [line width = 0.5mm,domain={30-30*\s}:{360-30*(\s)}] plot ({\shift * \s + 1.7*cos(\x)}, {0.8*sin(\x)});
        
        %line 3
        \draw [line width=0.5mm] ({\shift*\s +cos(30-30*(\s))+0.6},{sin(30-30*(\s))-0.1+6})  -- ({\shift*\s +cos(0)+2.85},{sin(30-30*(\s))-0.1+6}) node[ pos=0.5, sloped, scale = 1] {} ;
        \draw [ line width=0.5mm] ({\shift*\s +cos(0-30*(\s))+0.65},{sin(0-30*(\s))+6})  --  ({\shift*\s +cos(0)+2.8},{sin(0-30*(\s))+6}) node[ pos=0.5, sloped, scale = 1] {} ;

        \draw [line width = 0.5mm,domain={30-30*\s}:{360-30*(\s)}] plot ({\shift * \s + 1.7*cos(\x)}, {0.8*sin(\x)+6});
        
        };
        
        %circles
        \ifthenelse{\NOT 0=\s \AND \NOT 2=\s \AND \NOT 3=\s}{
        \foreach \k in {0,1}{
        \draw[thick] (\shift*\s,3*\k) circle [thick, radius=1];
           %removed e edges
            \ifthenelse{1=\s }{
            \draw [white,line width = 0.5mm,domain={120-30*\s}:{90-30*(\s)}] plot ({\shift * \s + cos(\x)}, {sin(\x)+3*\k});
             \draw [dotted,line width = 0.5mm,domain={120-30*\s}:{90-30*(\s)}] plot ({\shift * \s + cos(\x)}, {sin(\x)+3*\k});
            \draw [line width = 0.7mm,domain={90-30*\s}:{60-30*(\s)}] plot ({\shift * \s + cos(\x)}, {sin(\x)+3*\k});
             
             \draw [white,line width = 0.5mm,domain={60-30*\s}:{30-30*(\s)}] plot ({\shift * \s + cos(\x)}, {sin(\x)+3*\k});
             \draw [dotted,line width = 0.5mm,domain={60-30*\s}:{30-30*(\s)}] plot ({\shift * \s + cos(\x)}, {sin(\x)+3*\k});
             \draw [line width = 0.7mm,domain={30-30*\s}:{-30*(\s)}] plot ({\shift * \s + cos(\x)}, {sin(\x)+3*\k});
             
             \draw [white,line width = 0.5mm,domain={-30*\s}:{-30-30*(\s)}] plot ({\shift * \s + cos(\x)}, {sin(\x)+3*\k});
             \draw [dotted,line width = 0.5mm,domain={-30*\s}:{-30-30*(\s)}] plot ({\shift * \s + cos(\x)}, {sin(\x)+3*\k});
             
             }{
             \ifthenelse{4= \s}{
               \draw [white,line width = 0.5mm,domain={120-30*\s/2}:{90-30*(\s/2)}] plot ({\shift * \s + cos(\x)}, {sin(\x)+3*\k});
             \draw [dotted,line width = 0.5mm,domain={120-30*\s/2}:{90-30*(\s/2)}] plot ({\shift * \s + cos(\x)}, {sin(\x)+3*\k});
            \draw [line width = 0.7mm,domain={90-30*\s/2}:{60-30*(\s/2)}] plot ({\shift * \s + cos(\x)}, {sin(\x)+3*\k});
             
             \draw [white,line width = 0.5mm,domain={60-30*\s/2}:{30-30*(\s/2)}] plot ({\shift * \s + cos(\x)}, {sin(\x)+3*\k});
             \draw [dotted,line width = 0.5mm,domain={60-30*\s/2}:{30-30*(\s/2)}] plot ({\shift * \s + cos(\x)}, {sin(\x)+3*\k});
             \draw [line width = 0.7mm,domain={30-30*\s/2}:{-30*(\s/2)}] plot ({\shift * \s + cos(\x)}, {sin(\x)+3*\k});
             
             \draw [white,line width = 0.5mm,domain={-30*\s/2}:{-30-30*(\s/2)}] plot ({\shift * \s + cos(\x)}, {sin(\x)+3*\k});
             \draw [dotted,line width = 0.5mm,domain={-30*\s/2}:{-30-30*(\s/2)}] plot ({\shift * \s + cos(\x)}, {sin(\x)+3*\k});
             
             }{};};}

             \ifthenelse{1=\s}{
             \draw[thick] (\shift*\s,6) circle [thick, radius=1];
             \foreach \i in {0,1,2,3,9,10,11}
            {	\node (z) at ({\shift*\s + cos(30*\i)},{sin(30*\i)+6}) [ draw = black, circle,fill=black,minimum size = 6pt,inner sep=0pt]  {};}
             }{};
        \foreach \i in {0,1,2,3,9,10,11}
            {
        
        		\foreach \k in {0,1}{
        
                %gray vertices on circles
        		\node (z) at ({\shift*\s + cos(30*\i)},{sin(30*\i)+3*\k}) [ draw = black, circle,fill=gray!40!white,minimum size = 6pt,inner sep=0pt]  {};
        		%black vertices that are not used in billiard strategy
        		\ifthenelse{1=\s}{
            		\node (z) at ({\shift*\s + cos(270)},{sin(270)+3*\k}) [ draw = black, circle,fill=black,minimum size = 6pt,inner sep=0pt]  {};
        		}
        		{
        			\node (z) at ({\shift*\s + cos(90)},{sin(90)+3*\k}) [ draw = black, circle,fill=black,minimum size = 6pt,inner sep=0pt]  {};
        		}
        		
        		}
        		
            }

        }
        {
        \ifthenelse{\NOT 0=\s }{
        \foreach \k in {0,1}{
        
        %\draw[thick] (\shift*\s,3*\k) circle [thick, radius=0.35];
        %\draw[thick] (\shift*\s,-1+3*\k) circle [thick, radius=0.35];
        }}
        {

        }
        }
        
        }
        %end for loop s=0,1,2,3,4
        }
        %end for loop of 5

        %this code is for the extra two vertices on the left sides of the cycles above $u$
        \foreach \k in {0,1,2}{
        		\node (z) at ({ 5+cos(30*5.5)},{sin(30*6.5)+3*\k}) [ draw = black, circle,fill=black,minimum size = 6pt,inner sep=0pt]  {};
        		\node (z) at ({5+cos(30*7.2)},{sin(30*7.2)+3*\k}) [ draw = black, circle,fill=black,minimum size = 6pt,inner sep=0pt]  {};
        }

\foreach \level in {0,1} {
	\foreach \sublevel in {0,1,2} { %Three sublevels in one level
		\foreach \fibre in {0,1} { %Which fibre
		\foreach \subshift in {0.92} { %This changes the distance between sublevels
		\foreach \vertshift in {0.25} { %This one can be adjusted to move everything up or down
		\foreach \extrashift in {0,1} { %This one puts the arcs on both sides of the circles
			\draw[thick] (5*\fibre+10,0.7 +  \vertshift  +3- \subshift * \sublevel - 3*\level) circle [thick, radius=0.35];
        	\draw [line width=0.5mm] ({5*\fibre+10.3 -\extrashift*2.1  },{sin(60)+  \vertshift  +2.95 - \subshift * \sublevel - 3*\level})  -- ({5*\fibre+10+cos(0)+0.8-\extrashift*2.1},{sin(60)+ \vertshift +2.95 -  \subshift * \sublevel - 3*\level}) node[sloped, scale = 1] {} ;
        	\draw [line width=0.5mm] ({5*\fibre+10.3-\extrashift*2.1},{sin(40)+ \vertshift +2.95- \subshift * \sublevel -3*\level})  -- ({5*\fibre+10 +cos(0)+0.8-\extrashift*2.1},{sin(40) + \vertshift +2.95- \subshift * \sublevel -3*\level })
        	node[ sloped, scale = 1] {} ;
        	%node[currarrow, pos=0.5, sloped, scale = 1] {} ;
		} } } }
	}
}

        \end{tikzpicture}
\end{center}
    \caption{The figure shows four graphs. The first (top) graph represents $T_0^{\sigma}$, and a Hamiltonian cycle $C_0$ is shown. The second graph shows the Hamiltonian cycle $C_k$ constructed on $Q_k^{\sigma}$ using Theorem \ref{thm:base_case}. The third graph shows the combination of $C_0$ and $C_k$ that gives a Hamiltonian cycle $C$ on $T^{\sigma}$ as in the proof of Theorem \ref{thm:decomp_thm}.
    (In the second and third graphs, we write $\gamma = 2\mc(Q_k)-1$.)
    Finally, the last figure shows the base graph $T$.} \label{fig:example_factor}
\end{figure}

\section{Large prime cyclic groups}
\label{sec:large}
Let $T$ be a reflexive tree whose edges are labelled with integers by some function $\phi$. In this section, we consider the following question: Under which conditions is the lift of $T$ Hamiltonian, when the labels from $\phi$ are taken as elements of a large prime-ordered cyclic group $\mathbb{Z}_p$? As we always consider this specific question, it is convenient to fix some notation as follows.

Given a reflexive tree $T$, we let $\phi:E(T) \rightarrow \mathbb{Z}$ be an assignment of integers to the edges of $T$, and we let $\phi$ be positive at each loop of $T$.
We often write $\phi(v)$ instead of $\phi(v,v)$ for a vertex $v \in V(T)$.
We let $p$ be a prime number, and we  typically let $p$ be large. We let $\psi:\mathbb{Z} \rightarrow \mathbb{Z}_p$ be the natural homomorphism $x \mapsto x + \langle p \rangle$.
Then, for each edge $e \in E(T)$ with corresponding arcs $e^+, e^- \in A(T)$, we let $\sigma$ assign the values $\psi \circ \phi(e)$ and $-\psi \circ \phi(e)$ to $e^+$ and $e^-$, respectively.
Therefore, given $T$ and $\phi$, $\sigma$ gives a voltage assignment to $A(T)$ over $\mathbb{Z}_p$ by interpreting the integers given by $\phi$ as group elements in $\mathbb{Z}_p$. 
For a vertex $v \in V(T)$, we often say that $\sigma(v)$ is the voltage of $v$.
Note that since $p$ is prime and $\phi$ is nonzero, for each vertex $v \in V(T)$, $\sigma(v)$ is a generator for $\mathbb Z_p$, and therefore each fiber of $T^{\sigma}$ contains a single cycle.
With this notation in place, we may rephrase our question more precisely.
\begin{question}
\label{question:bigPrime}
Let $T$ be a reflexive tree, and let $\phi$ be an assignment of integers to $E(T)$ as described above. Does there exist a number $N \in \mathbb{N}$ such that $T^{\sigma}$ is Hamiltonian whenever $p \geq N$?
\end{question}

While we do not have an answer for Question \ref{question:bigPrime}, we are able to show the following results. 

\begin{theorem}
\label{thmLeaf}
Let $T$ be a reflexive tree, and let $\phi$ be an assignment of integers to $E(T)$ as described above. Suppose that there exists a vertex $v\in V(T)$ such that for every neighbor $u$ of $v$, $\phi(u)= \phi(v)$. Then, when $p$ is a sufficiently large prime, $T^{\sigma}$ is Hamiltonian.
\end{theorem}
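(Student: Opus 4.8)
The plan is to build a Hamiltonian cycle by rooting $T$ at $v$ and using the extra fiber over $v$ as a ``spare track'' that supplies the flexibility needed to glue the rest of the tree together. First I record the simplifications available for large $p$. Since $\phi$ is positive on every loop and $p$ is prime, as soon as $p$ exceeds every loop label we have $\gcd(p,\sigma(w))=1$ for all $w$, so each fiber $w^{\sigma}$ is a single $p$-cycle, and by Lemma \ref{lemZero} I may assume every cut-edge is labelled $0$, so adjacent fibers are joined by the index-preserving matching $\{[(w,g),(w',g)]:g\in\mathbb Z_p\}$. Writing $r=\phi(v)$, the hypothesis gives $\phi(u)=r$ for every neighbour $u$ of $v$, so the cycle $v^{\sigma}$ and each neighbour cycle $u^{\sigma}$ share the same step $r$; after relabelling group elements the subgraph induced on $v^{\sigma}$ together with all neighbour fibers is isomorphic to the product $K_{1,\deg v}\times C_p$, which I will call the \emph{core}.

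The guiding observation is that a lone fiber cycle $w^{\sigma}$ has a Hamiltonian path between $(w,a)$ and $(w,b)$ only when these are consecutive, whereas an \emph{aligned pair} of fibers (a prism $C_p\times K_2$) has Hamiltonian paths between same-layer vertices at essentially every gap once $p$ is large. This is exactly the flexibility the core provides, and it is genuinely necessary: for a prime $p\equiv 5\pmod 6$ the lift of the two-vertex reflexive path with labels $1,2$ is the generalized Petersen graph $GP(p,2)$, which is \emph{not} Hamiltonian, so no argument reducing to Hamiltonicity of the bare subtrees can succeed, and the companion fiber over $v$ is what rescues such configurations. Accordingly I would cover $T^{\sigma}$ as follows. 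For each neighbour $u_i$ of $v$, let $T_i$ be the subtree hanging from $u_i$; recursively produce a Hamiltonian path $R_i$ of $T_i^{\sigma}$ with both ends in the root fiber $u_i^{\sigma}$, and then cover the core by a system of vertex-disjoint paths that meets every core vertex and joins the ends of the $R_i$ in the unique pattern that fuses all the $R_i$ and the core paths into one cycle. Such a linkage exists in $K_{1,\deg v}\times C_p$ for large $p$ because $p>\deg v$ leaves room to place the attachment points on distinct rungs; I would construct it with the extended billiard strategy of Lemma \ref{lemma:billiardPaths}, using Corollary \ref{corInclude}, Lemma \ref{lemma2factor}, and Theorem \ref{thm:base_case} along the sub-paths $u_i\,v\,u_j$ of the core.

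The recursive subproblem is the crux, and its inductive statement must be calibrated with care. Demanding a flexible-gap Hamiltonian path at \emph{every} gap, including the cycle-closing gap $\pm\phi(\mathrm{root})$, for every reflexive tree would amount to a positive answer to Question \ref{question:bigPrime}, which is open, and the $GP(p,2)$ example shows the closing gap can indeed be unattainable. The substance of the proof is therefore to identify which gaps actually need to be realized at each internal fiber, to verify that these are never the forbidden closing gaps, and to show that the flexibility injected at the aligned core (and wherever else equal labels occur along a branch) suffices to realize them for all sufficiently large $p$.

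The step I expect to be the main obstacle is precisely this propagation of flexibility into deep subtrees: a fiber $w^{\sigma}$ whose label differs from those of all its tree-neighbours has no aligned companion, so covering the subtree beyond $w$ at a mismatched gap must borrow flexibility routed up from the core through the intervening matchings, and proving that the borrowed flexibility is enough \emph{uniformly in the shape of the subtree} is delicate. I would control this by maintaining a budget of ``free'' rungs — the core supplies $p$ of them, far exceeding the fixed number of mismatched crossings in $T$ — and spending one packet of rungs per mismatch, so that for $p$ large the budget is never exhausted and every required flexible-gap path can be built.
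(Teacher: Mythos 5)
Your reduction correctly identifies the two structural facts the proof must exploit (for large $p$ every fiber is a single $p$-cycle, and by Lemma \ref{lemZero} cut-edges may be taken to have label $0$ so adjacent fibers are joined by the index-preserving matching), and your $GP(p,2)$ example is a genuinely good observation showing that any recursion demanding Hamiltonicity, or traceability with prescribed endpoint gaps, of the bare subtree lifts cannot work. But this is where the proposal stops being a proof: the entire construction rests on the recursive claim that each subtree lift $T_i^{\sigma}$ admits a Hamiltonian path $R_i$ with both ends in the root fiber $u_i^{\sigma}$ at gaps that the core can absorb, and you explicitly defer the justification of this claim to a ``budget of free rungs'' heuristic. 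That heuristic does not repair the gap, because flexibility at the core does not propagate into a distant subtree: the set of endpoint gaps realizable by a spanning path of $T_i^{\sigma}$ is an intrinsic property of $T_i$ and its labels, and deleting rungs or fiber edges near $v$ changes nothing inside $T_i^{\sigma}$. Deciding which gaps are realizable deep in an arbitrary labelled subtree is, as you yourself note, essentially Question \ref{question:bigPrime}, which is open; so the proposal reduces the theorem to a problem at least as hard as the one it is meant to solve.

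The paper escapes this trap with a different relaxation, and it is the idea your proposal is missing: instead of spanning paths with controlled ends, it proves (Lemma \ref{lemma:twoFactor}, by induction on the tree) that each $T_i^{\sigma}$ contains a $2$-factor $F_i$ such that \emph{every component} of $F_i$ contains an edge of a prescribed matching $M_{u_i}(N_i)$ in the root fiber, all of whose edges have the form $[(u_i,a),(u_i,a+\sigma(u_i))]$. A $2$-factor with anchored components always exists --- no Hamiltonicity of subtrees is ever needed --- and the inductive mechanism of Lemma \ref{lemma:twoFactor} is precisely what routes every component, however deep, down to an edge in the root fiber. The hypothesis $\phi(u_i)=\phi(v)$ then enters exactly once: each anchored edge $[(u_i,a),(u_i,a+\sigma(v))]$ is parallel to the edge $[(v,a),(v,a+\sigma(v))]$ of the spine cycle $v^{\sigma}$, so one splices the component into the growing cycle by exchanging these two parallel fiber edges for the two rungs $[(u_i,a),(v,a)]$ and $[(u_i,a+\sigma(v)),(v,a+\sigma(v))]$; shifting the matchings into disjoint windows $M_{u_i}(N)+iN$ via automorphisms keeps the splices from interfering, and iterating over all components of all $F_i$ absorbs everything into $v^{\sigma}$. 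None of the billiard-strategy machinery (Lemma \ref{lemma:billiardPaths}, Corollary \ref{corInclude}, Theorem \ref{thm:base_case}) you invoke is used here, and in particular no linkage problem in $K_{1,\deg v}\times C_p$ ever has to be solved. To salvage your write-up you would replace the recursive traceability claim by this anchored-$2$-factor lemma; as it stands, the proposal has a genuine gap at its central step.
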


%%Figure-6%%
\begin{figure}
    \centering
    \begin{tikzpicture}
    [scale=1,auto=left,every node/.style={circle,fill=gray!30},minimum size = 6pt,inner sep=0pt]
%\clip (1,-3.35) rectangle + (21.5,4.9);

%%%%TREEEEEEEE
\node (z) at (4,-0.5) [draw = white, fill = white] {$3$};
\node (z) at (5,-0.5) [draw = white, fill = white] {$3$};
\node (z) at (6,-0.5) [draw = white, fill = white] {$3$};
\node (z) at (5,1) [draw = white, fill = white] {$3$};

%\node (z) at (1.5,-1.5) [draw = white, fill = white] {};
%\node (a1) at (0,0) [draw = black] {};
%\node (a2) at (1,0) [draw = black] {};
\node (f3) at (2,0) [draw = black] {};
\node (f4) at (3,0) [draw = black] {};
\node (f5) at (4,0) [draw = black,fill=black] {};
\node (f6) at (3.7,1) [draw = black] {};
\node (f7) at (3,1) [draw = black] {};
%\node (f9) at (4,0.5) [draw = black] {};
\node (f10) at (5,0) [draw = black,fill=black] {};
\node (h1) at (6,0) [draw = black,fill=black] {};
\node (h2) at (6,0.5) [draw = black] {};
\node (a) at (4.3,1) [draw = black] {};
\node (a1) at (5,0.5) [draw = black,fill=black] {};
\node (b) at (2,1) [draw = black] {};

\foreach \from/\to in {
%a1/a2,a2/a3,
f3/f4,f4/f5,f5/f6,f4/f7,f5/f10,f10/h1,h1/h2,a/f5,f10/a1,b/f4}
    \draw (\from) -- (\to);
    \end{tikzpicture}
    \caption{This figure shows a reflexive voltage tree (with loops omitted) that satisfies the conditions in Theorem \ref{thmLeaf}. The number beside each vertex denotes the voltage assignment of its loop. If a vertex has no number beside it, then its loop can have any nonzero voltage within $\mathbb{Z}_p$.}
    \label{fig:thmTree}
\end{figure}
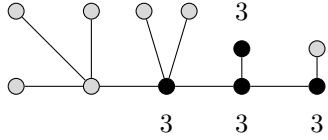

\begin{theorem}
\label{thm1}
Let $T$ be a reflexive tree, and let $\phi$ be an assignment of integers to $E(T)$ as described above. Suppose that there exist two adjacent vertices $u,v \in V(T)$ for which $\phi(u) = \phi(v) = 1$. Then, when $p$ is a sufficiently large prime, $T^{\sigma}$ is Hamiltonian. 
\end{theorem}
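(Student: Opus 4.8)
The plan is to reduce Theorem \ref{thm1} to the path machinery of Section \ref{sec:tools}, exploiting the simplifications that a large prime forces. First I would record these: since $\phi$ is nonzero on every loop, for all sufficiently large $p$ each $\sigma(w)$ generates $\mathbb Z_p$, so $\gcd(p,\sigma(w))=1$, every fiber $w^\sigma$ is a single $p$-cycle, and $\mc(\Gamma)=1$ and $d=2\mc(\Gamma)=2$ for every subpath $\Gamma$ of $T$. In particular the extended billiard strategy always produces just two lifted paths, and every weight is $\omega_{Q_i}(v)=2$, so $\Omega_T(v)\le 2\deg_T(v)\le p$ and the room/weighting constraints are automatically slack. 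By Lemma \ref{lemZero} I would also assume each cut-edge has voltage $0$, so cut-edges lift to the parallel matchings $(x,g)\mapsto(y,g)$, and $T^{\sigma}$ becomes a ``tree of $p$-cycles'' glued by identity matchings.

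The structural role of the hypothesis is that the voltage-$1$ edge $uv$ lifts to a prism $u^\sigma\cup v^\sigma$: two $p$-cycles of step $1$ joined by the identity matching, which for odd $p$ is a flexible gadget admitting Hamiltonian paths whose two endpoints may be placed at freely prescribed positions of the attaching fiber. The billiard strategy of Lemma \ref{lemma:billiardPaths}, together with Corollary \ref{corInclude}, already covers all \emph{internal} fibers of any path in $T$ \emph{regardless of their voltages}; the genuine constraints appear only at the endpoints, where the two lifted paths arrive spaced by the common billiard parameter $r$, and capping a leaf forces $r$ to equal that leaf's voltage. Thus each pendant path of $T$, after running the billiard and capping its leaf, collapses to a single \emph{bridge}: an arc covering that pendant's fibers whose two ends lie in the fiber of its attaching vertex, spaced by the pendant's voltage.

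I would then argue by induction on the number of decomposition paths, mimicking the reservoir invariant used in the proof of Theorem \ref{thm:decomp_thm}: I maintain a Hamiltonian cycle of the processed sublift that contains a near-complete path inside the fiber of the current frontier vertex. Rooting $T$ at the edge $uv$ and peeling pendant structures inward toward $uv$, at each step the bridge built from a peeled pendant is threaded into the frontier fiber by rerouting the current cycle through the reservoir there; the two billiard paths are interleaved (exactly as a single lifted edge is swept by an alternating walk, using the odd-shifting parity to avoid leaving a gap) so that the frontier fiber is covered without orphaned vertices. The prism $u^\sigma\cup v^\sigma$ is reserved as the innermost piece, furnishing the one connection whose spacing and position are completely free and so able to close the whole construction into a single cycle.

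The step I expect to be the main obstacle is the simultaneous gluing at branch vertices whose voltage differs from $1$. There several bridges enter one fiber, each at a spacing dictated by a child's or leaf's voltage, and these forced spacings must be reconciled at once with sweeping the entire fiber and still leaving a reservoir for the parent; already at a degree-three joint this imposes a nontrivial length-and-parity condition (of the shape $\rho_a-\rho_b-\rho_c\equiv 3$ in the appropriate slot coordinates) that fails for generic voltages. This is precisely why large $p$ alone does not settle Question \ref{question:bigPrime}, and where the hypothesis must be spent: the crux is to show that the single adjustable bridge coming from the voltage-$1$ prism, routed through the choice of which path is oriented toward $uv$ and of the free billiard parameters along it, supplies exactly the one degree of freedom needed to absorb the accumulated mismatch and to satisfy every gluing constraint consistently over the whole tree. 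Verifying that one prism's worth of flexibility suffices for an arbitrary tree, and that the ample large-$p$ slack makes each required bridge realizable, is where the real work lies.
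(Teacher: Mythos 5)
You have correctly identified where your plan breaks, but the unresolved step is not a technical loose end --- it is the entire content of the theorem, so what you have is an outline with its crux missing rather than a proof. Your route through the Section~\ref{sec:tools} machinery cannot be completed as stated: Theorem~\ref{thm:decomp_thm} requires an odd shifting decomposition whose paths share a \emph{common} endpoint voltage coprime to $n$, while in the present setting the joints of $T$ carry arbitrary loop voltages $\phi(w)$, so no such decomposition exists in general. Moreover, your available freedoms do not scale with the tree: the billiard parameter $l$ in Lemma~\ref{lemma:billiardPaths} only translates where the two lifted paths arrive in a fiber and cannot change their spacing, and, as you note, capping a pendant leaf forces the spacing of the resulting bridge's ends to equal that leaf's voltage. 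A tree with many branch vertices of pairwise unrelated voltages thus imposes a number of independent spacing-and-parity constraints growing with the tree, against which your construction carries essentially one degree of freedom (the voltage-$1$ prism); nothing in the proposal argues these constraints can be met simultaneously, and under the single-cycle invariant you propose to maintain (modeled on the invariant in the proof of Theorem~\ref{thm:decomp_thm}) they generically cannot. Asserting that ``one prism's worth of flexibility suffices'' is precisely the claim that needed proof.

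The paper escapes this bind by abandoning the demand for a single cycle during the induction. Lemma~\ref{lemma:twoFactor} builds, for any vertex $v$, a $2$-factor $H$ of $T^{\sigma}$ --- components are allowed to proliferate at badly labelled branch vertices --- such that $M_v(N) \subseteq E(H)$, every component of $H$ meets the matching $M_v(N)$, and $H$ contains all edges of $v^{\sigma}[N,p-1]$; the nested scales $N' = 2\varphi(v)N_1 \cdots N_r$ together with Observation~\ref{obsDJ} (this is where the largeness of $p$ enters) keep the exchanged matchings disjoint. The hypothesis $\phi(u)=\phi(v)=1$ is then spent only once, at the very end: each component of $F_u$ is spliced into the large component $A_v$ of $F_v$ (and symmetrically) by deleting $[(u,i),(u,i+1)]$ and $[(v,i),(v,i+1)]$ and adding $[(u,i),(v,i)]$ and $[(u,i+1),(v,i+1)]$ --- an exchange available exactly because both fibers have step $1$ --- and one final such exchange merges the two remaining cycles into a Hamiltonian cycle. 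To salvage your approach you would have to replace your single-cycle invariant with a multi-component invariant of this kind, which amounts to rediscovering Lemma~\ref{lemma:twoFactor}.
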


%%Figure-6%%
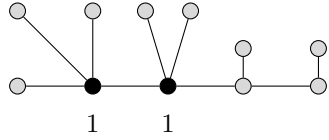
\begin{figure}
    \centering
    \begin{tikzpicture}
    [scale=1,auto=left,every node/.style={circle,fill=gray!30},minimum size = 6pt,inner sep=0pt]
%\clip (1,-3.35) rectangle + (21.5,4.9);

%%%%TREEEEEEEE
\node (z) at (4,-0.5) [draw = white, fill = white] {$1$};
\node (z) at (3,-0.5) [draw = white, fill = white] {$1$};

%\node (z) at (1.5,-1.5) [draw = white, fill = white] {};
%\node (a1) at (0,0) [draw = black] {};
%\node (a2) at (1,0) [draw = black] {};
\node (f3) at (2,0) [draw = black] {};
\node (f4) at (3,0) [draw = black,fill=black] {};
\node (f5) at (4,0) [draw = black,fill=black] {};
\node (f6) at (3.7,1) [draw = black] {};
\node (f7) at (3,1) [draw = black] {};
%\node (f9) at (4,0.5) [draw = black] {};
\node (f10) at (5,0) [draw = black] {};
\node (h1) at (6,0) [draw = black] {};
\node (h2) at (6,0.5) [draw = black] {};
\node (a) at (4.3,1) [draw = black] {};
\node (a1) at (5,0.5) [draw = black] {};
\node (b) at (2,1) [draw = black] {};

\foreach \from/\to in {
%a1/a2,a2/a3,
f3/f4,f4/f5,f5/f6,f4/f7,f5/f10,f10/h1,h1/h2,a/f5,f10/a1,b/f4}
    \draw (\from) -- (\to);
    \end{tikzpicture}
    \caption{This figure shows a reflexive voltage tree (with loops omitted) that satisfies the conditions of Theorem \ref{thm1}. The number beside each vertex denotes the voltage assignment of its loop. If a vertex has no number beside it, then its loop can have 
    any nonzero voltage within $\mathbb{Z}_p$.}
    \label{fig:thm1}
\end{figure}

Trees that satisfy the conditions of Theorem \ref{thmLeaf} and Theorem \ref{thm1} are sketched in Figures \ref{fig:thmTree} and \ref{fig:thm1}.
For both of these theorems, we use the following general strategy. First, we describe how to build a $2$-factor in $T^{\sigma}$ satisfying certain conditions. Then we use the vertex $u$ in the case of Theorem \ref{thmLeaf} and the pair $u,v$ in the case of Theorem \ref{thm1} to connect all components of our $2$-factor into a single Hamiltonian cycle.
In the proofs of both of these theorems, we assume that $p$ is a sufficiently large prime number. We also use Lemma \ref{lemZero} to assume without loss of generality that $\phi(e) = 0$ for every cut-edge $e \in E(T)$.

We establish some definitions and lemmas that help us prove these results. For a vertex $v \in V(T)$, and for two integers $0 \leq a,b \leq p-1$, we define $v^{\sigma}[a,b]$ to be the graph induced by the vertex set $\{(v,i): a \leq i \leq b\}$. We also have the following definition.

\begin{definition}
\label{defMatching}
Let $v \in V(T)$. 
%be a vertex with a loop $e$, and suppose that there is a fixed voltage assignment $\sigma(v): \{e\} \rightarrow \mathbb{Z}_p$ at $e$. 
For each positive multiple $N$ of $2 \varphi(v)$, we let $M_v(N)$ denote the matching in $v^{\sigma}$ containing all edges of the form
$$\{(v, 2a\sigma(v) + i), (v, (2a+1)\sigma(v) + i)\}$$
for $a \in \{0, 1, \dots, \frac{N}{2 \varphi(v)} - 1\}$, $i \in \{0,1,\dots,\varphi(v) - 1\}$. 
%{\LS When I first looked at this definition, I was a bit confused but then one can nicely see that this will be matching! At this point the choice of intervals for $a$ or $i$ are not clear. Shall we say something how this will be used? Why it is enough to take that many matching edges?} 
Furthermore, for an element $g \in G$, we say that $M_v(N)+g$ denotes the matching obtained from $M_v(N)$ by applying the automorphism $(v,t) \mapsto (v,t+g)$.
\end{definition}

Figure \ref{figMatching} shows a matching $M_v(N)$ for $\varphi(v) = 3$ and $N = 12$, as well as a matching $M_v(N) + 1$.
The matchings described in Definition \ref{defMatching} are useful when we construct our $2$-factor on $T^{\sigma}$ as described earlier. Informally, for two adjacent vertices $w, x\in V(T)$, if we have a $2$-factor $F$ of the lift of the component of $T-x$ containing $w$ that contains many edges of $w^{\sigma}$ and a second $2$-factor $F_0$ of the lift of the component of $T-w$ containing $x$ that contains many edges of $x^{\sigma}$, then we may ``join" $F$ and $F_0$ by removing two matchings of this form of equal size from $w^{\sigma}$ and $x^{\sigma}$ and replacing them with a matching consisting of edges of the form $[(w,g), (x,g)]$. 
Figure \ref{figGrowingTree} shows two $2$-factors being joined in a similar way to what we have described here. The removed edges are depicted in gray, and the added edges are shown as vertical edges.
The following observations are useful for us when we use these matchings.

%For example, if $\sigma(v) = 3$ and $N = 12$, the matching $M_v(N)+1$ contains the edges whose endpoints correspond to the following $\mathbb{Z}_p$ element pairs: $ (1,4), (2,5), (3,6),  (7,10), (8,11), (9,12)$.

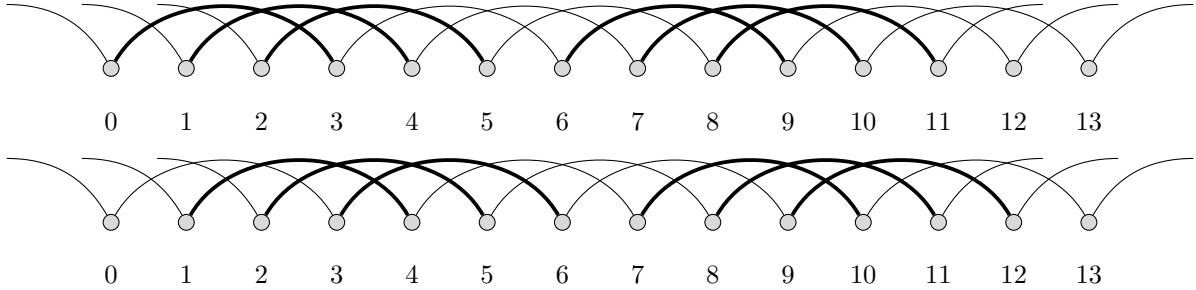
\begin{figure}
\begin{tikzpicture}
  [scale=1,auto=left,every node/.style={circle,fill=gray!30},minimum size = 6pt,inner sep=0pt]
 %[scale=2.2,auto=left,every node/.style={circle,fill=gray!30,minimum size = 6pt,inner sep=0pt}]
%\node (z) at (0,-0.5) [draw = white, fill = white] {$T$};
%\node (z) at (1.5,-1.5) [draw = white, fill = white] {};
\node (a0) at (-1,-0.7) [draw = white, fill = white] {$0$};
\node (a0) at (0,-0.7) [draw = white, fill = white] {$1$};
\node (a0) at (1,-0.7) [draw = white, fill = white] {$2$};
\node (a0) at (2,-0.7) [draw = white, fill = white] {$3$};
\node (a0) at (3,-0.7) [draw = white, fill = white] {$4$};
\node (a0) at (4,-0.7) [draw = white, fill = white] {$5$};
\node (a0) at (5,-0.7) [draw = white, fill = white] {$6$};
\node (a0) at (6,-0.7) [draw = white, fill = white] {$7$};
\node (a0) at (7,-0.7) [draw = white, fill = white] {$8$};
\node (a0) at (8,-0.7) [draw = white, fill = white] {$9$};
\node (a0) at (9,-0.7) [draw = white, fill = white] {$10$};
\node (a0) at (10,-0.7) [draw = white, fill = white] {$11$};
\node (a0) at (11,-0.7) [draw = white, fill = white] {$12$};
\node (a0) at (12,-0.7) [draw = white, fill = white] {$13$};

\node (n3) at (-2.5,0.85) [draw = white, fill = white] {};
\node (n2) at (-1.5,0.85) [draw = white, fill = white] {};
\node (n1) at (-0.5,0.85) [draw = white, fill = white] {};
\node (a0) at (-1,0) [draw = black] {};
\node (a1) at (0,0) [draw = black] {};
\node (a2) at (1,0) [draw = black] {};
\node (a3) at (2,0) [draw = black] {};
\node (a4) at (3,0) [draw = black] {};
\node (a5) at (4,0) [draw = black] {};
\node (a6) at (5,0) [draw = black] {};
\node (a7) at (6,0) [draw = black] {};
\node (a8) at (7,0) [draw = black] {};
\node (a9) at (8,0) [draw = black] {};
\node (a10) at (9,0) [draw = black] {};
\node (a11) at (10,0) [draw = black] {};
\node (a12) at (11,0) [draw = black] {};
\node (a13) at (12,0) [draw = black] {};
\node (a14) at (11.5,0.85) [draw = white, fill = white] {};
\node (a15) at (12.5,0.85) [draw = white, fill = white] {};
\node (a16) at (13.5,0.850) [draw = white, fill = white] {};
%\node (a17) at (16,0) [draw = black] {};
%\node (a18) at (17,0) [draw = black] {};
%\node (a19) at (18,0) [draw = black] {};
%\node (a20) at (19,0) [draw = black] {};
%\node (a21) at (20,0) [draw = black] {};
%\node (a22) at (21,0) [draw = black] {};
%\node (a23) at (22,0) [draw = black] {};
%\draw (a1) to  [out=80,in=100] (a23) ;

\draw  [line width = 0.5mm] (a0) to  [out=60,in=120] (a3) ;
\draw [line width = 0.5mm] (a1) to  [out=60,in=120] (a4) ;
\draw [line width = 0.5mm] (a2) to  [out=60,in=120] (a5) ;
\draw  (a6) to  [out=60,in=120] (a9) ;
\draw [line width = 0.5mm] (a7) to  [out=60,in=120] (a10) ;
\draw [line width = 0.5mm] (a8) to  [out=60,in=120] (a11) ;

\draw  (a9) to  [out=60,in=120] (a12) ;

\draw (a3) to  [out=60,in=120] (a6) ;
\draw  (a4) to  [out=60,in=120] (a7) ;
\draw  (a5) to  [out=60,in=120] (a8) ;
\draw  [line width = 0.5mm] (a6) to  [out=60,in=120] (a9) ;
\draw  (a10) to  [out=60,in=120] (a13) ;
\draw  (a11) to  [out=60,in=180] (a14) ;
\draw  (a12) to  [out=60,in=180] (a15) ;
\draw  (a13) to  [out=60,in=180] (a16) ;
\draw  (n1) to  [out=0,in=120] (a2) ;
\draw  (n2) to  [out=0,in=120] (a1) ;
\draw  (n3) to  [out=0,in=120] (a0) ;

%\foreach \from/\to in {a0/a1,a1/a2,a2/a3,
%a3/a4,a4/a5,a5/a6,a6/a7,a7/a8,a8/a9,a9/a10,a10/a11,a11/a12,a12/a13,a13/a14,a14/a15,a15/a16,a16/a17,a17/a18,a18/a19,a19/a20,a21/a20,a21/a22,a22/a23}
%    \draw (\from) -- (\to);
\end{tikzpicture} 
\begin{tikzpicture}
  [scale=1,auto=left,every node/.style={circle,fill=gray!30},minimum size = 6pt,inner sep=0pt]
 %[scale=2.2,auto=left,every node/.style={circle,fill=gray!30,minimum size = 6pt,inner sep=0pt}]
%\node (z) at (0,-0.5) [draw = white, fill = white] {$T$};
%\node (z) at (1.5,-1.5) [draw = white, fill = white] {};
\node (a0) at (-1,-0.7) [draw = white, fill = white] {$0$};
\node (a0) at (0,-0.7) [draw = white, fill = white] {$1$};
\node (a0) at (1,-0.7) [draw = white, fill = white] {$2$};
\node (a0) at (2,-0.7) [draw = white, fill = white] {$3$};
\node (a0) at (3,-0.7) [draw = white, fill = white] {$4$};
\node (a0) at (4,-0.7) [draw = white, fill = white] {$5$};
\node (a0) at (5,-0.7) [draw = white, fill = white] {$6$};
\node (a0) at (6,-0.7) [draw = white, fill = white] {$7$};
\node (a0) at (7,-0.7) [draw = white, fill = white] {$8$};
\node (a0) at (8,-0.7) [draw = white, fill = white] {$9$};
\node (a0) at (9,-0.7) [draw = white, fill = white] {$10$};
\node (a0) at (10,-0.7) [draw = white, fill = white] {$11$};
\node (a0) at (11,-0.7) [draw = white, fill = white] {$12$};
\node (a0) at (12,-0.7) [draw = white, fill = white] {$13$};

\node (n3) at (-2.5,0.85) [draw = white, fill = white] {};
\node (n2) at (-1.5,0.85) [draw = white, fill = white] {};
\node (n1) at (-0.5,0.85) [draw = white, fill = white] {};
\node (a0) at (-1,0) [draw = black] {};
\node (a1) at (0,0) [draw = black] {};
\node (a2) at (1,0) [draw = black] {};
\node (a3) at (2,0) [draw = black] {};
\node (a4) at (3,0) [draw = black] {};
\node (a5) at (4,0) [draw = black] {};
\node (a6) at (5,0) [draw = black] {};
\node (a7) at (6,0) [draw = black] {};
\node (a8) at (7,0) [draw = black] {};
\node (a9) at (8,0) [draw = black] {};
\node (a10) at (9,0) [draw = black] {};
\node (a11) at (10,0) [draw = black] {};
\node (a12) at (11,0) [draw = black] {};
\node (a13) at (12,0) [draw = black] {};
\node (a14) at (11.5,0.85) [draw = white, fill = white] {};
\node (a15) at (12.5,0.85) [draw = white, fill = white] {};
\node (a16) at (13.5,0.850) [draw = white, fill = white] {};
%\node (a17) at (16,0) [draw = black] {};
%\node (a18) at (17,0) [draw = black] {};
%\node (a19) at (18,0) [draw = black] {};
%\node (a20) at (19,0) [draw = black] {};
%\node (a21) at (20,0) [draw = black] {};
%\node (a22) at (21,0) [draw = black] {};
%\node (a23) at (22,0) [draw = black] {};
%\draw (a1) to  [out=80,in=100] (a23) ;

\draw  (a0) to  [out=60,in=120] (a3) ;
\draw [line width = 0.5mm] (a1) to  [out=60,in=120] (a4) ;
\draw [line width = 0.5mm] (a2) to  [out=60,in=120] (a5) ;
\draw  (a6) to  [out=60,in=120] (a9) ;
\draw [line width = 0.5mm] (a7) to  [out=60,in=120] (a10) ;
\draw [line width = 0.5mm] (a8) to  [out=60,in=120] (a11) ;

\draw [line width = 0.5mm] (a3) to  [out=60,in=120] (a6) ;
\draw  (a4) to  [out=60,in=120] (a7) ;
\draw  (a5) to  [out=60,in=120] (a8) ;
\draw  [line width = 0.5mm] (a9) to  [out=60,in=120] (a12) ;
\draw  (a10) to  [out=60,in=120] (a13) ;
\draw  (a11) to  [out=60,in=180] (a14) ;
\draw  (a12) to  [out=60,in=180] (a15) ;
\draw  (a13) to  [out=60,in=180] (a16) ;
\draw  (n1) to  [out=0,in=120] (a2) ;
\draw  (n2) to  [out=0,in=120] (a1) ;
\draw  (n3) to  [out=0,in=120] (a0) ;

%\foreach \from/\to in {a0/a1,a1/a2,a2/a3,
%a3/a4,a4/a5,a5/a6,a6/a7,a7/a8,a8/a9,a9/a10,a10/a11,a11/a12,a12/a13,a13/a14,a14/a15,a15/a16,a16/a17,a17/a18,a18/a19,a19/a20,a21/a20,a21/a22,a22/a23}
%    \draw (\from) -- (\to);
\end{tikzpicture} 
\caption{The figures show part of a fiber $v^{\sigma}$ of a vertex $v$ with $\varphi(v) = 3$ which belongs to the lift of a voltage graph over a large cyclic group. In the upper figure, the bolded edges make up the edges in the matching $M_v(12)$. In the lower figure, the bolded edges make up the edges in the matching $M_v(12)+1$.}
\label{figMatching}
\end{figure}

\begin{observation}
\label{obsDJ}
For each positive integer $N$, when $p$ is sufficiently large, the matchings $M_v(N)$ and $M_v(N) + \sigma(v)$ are edge-disjoint.
\end{observation}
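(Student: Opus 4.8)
The plan is to reduce edge-disjointness of the two matchings to a disjointness statement about integers, and then win using uniqueness of quotients under division. Since $p$ is prime and $\phi(v)\neq 0$, the fiber $v^{\sigma}$ is a single $p$-cycle, and every edge of this cycle joins a pair $(v,t),(v,t+\sigma(v))$ whose second coordinates differ by $\sigma(v)$. I would record each such edge by its \emph{lower endpoint} $t$, i.e.\ the unique vertex from which adding $\sigma(v)$ yields the other endpoint. This bookkeeping is well defined exactly because $2\sigma(v)\neq 0$ in $\mathbb{Z}_p$, which holds for every odd prime $p$; consequently two edges of the cycle coincide if and only if they share the same lower endpoint. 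Thus $M_v(N)$ and $M_v(N)+\sigma(v)$ are edge-disjoint if and only if their sets of lower endpoints are disjoint in $\mathbb{Z}_p$.

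Next I would choose $p$ large enough that $p>N$ and $p>\phi(v)$, so that the representative of $\sigma(v)$ in $\{0,\dots,p-1\}$ equals the positive integer $\varphi(v)=\phi(v)$; write $w:=\phi(v)$ and $N=2wK$. Reading Definition \ref{defMatching} directly, the lower endpoints of the edges of $M_v(N)$ are the integers $2aw+i$ with $0\leq a\leq K-1$ and $0\leq i\leq w-1$, while applying the shift $(v,t)\mapsto(v,t+\sigma(v))$ turns each lower endpoint $t$ into $t+w$, so the lower endpoints of $M_v(N)+\sigma(v)$ are the integers $(2a+1)w+i$ over the same ranges. Both families are contained in the integer interval $\{0,1,\dots,N-1\}$.

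To finish, since $p>N$ the reduction map modulo $p$ is injective on $\{0,1,\dots,N-1\}$, so it suffices to check that these two integer sets are disjoint. An integer of the first type satisfies $\lfloor t/w\rfloor=2a$ (even), while an integer of the second type satisfies $\lfloor t/w\rfloor=2a+1$ (odd), precisely because $0\leq i\leq w-1$; by uniqueness of the quotient in division with remainder, the two sets cannot meet. Hence the lower-endpoint sets are disjoint in $\mathbb{Z}_p$, and the two matchings are edge-disjoint.

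I expect the only genuine obstacle to be bookkeeping rather than depth: I must make sure the edge-to-lower-endpoint correspondence is a genuine bijection (it fails only in the degenerate case $p=2$, which is exactly why the statement asks for $p$ sufficiently large), and that the $O(N)$ relevant indices all live in a window of length $N<p$, so that no wraparound in $\mathbb{Z}_p$ accidentally identifies a lower endpoint of $M_v(N)$ with one of $M_v(N)+\sigma(v)$. Both requirements are met by taking $p>N$ with $p$ odd, after which the even/odd quotient dichotomy yields disjointness immediately.
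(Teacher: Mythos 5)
Your proof is correct. Note that the paper states Observation \ref{obsDJ} without any proof, treating it as immediate from Definition \ref{defMatching}; your argument supplies exactly the routine verification the authors leave to the reader. Writing $w=\phi(v)$ and taking $p>N$ (so that no reduction modulo $p$ identifies distinct integers in $\{0,\dots,N-1\}$, and so that the representative of $\sigma(v)$ is the integer $w$ itself, which follows automatically since $N\geq 2w$), the lower endpoints of $M_v(N)$ are the integers $2aw+i$ and those of $M_v(N)+\sigma(v)$ are $(2a+1)w+i$ with $0\leq i\leq w-1$, and the even/odd dichotomy of the quotients $\lfloor t/w\rfloor$ under division with remainder gives disjointness; your extra care about the degenerate case $2\sigma(v)=0$ (excluded since $p$ is an odd prime), which makes the edge-to-lower-endpoint labelling a genuine bijection, is a point the paper glosses over entirely.
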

\begin{observation}
$M_v(2N) = M_v(N) \cup( M_v(N) + N)$.
\end{observation}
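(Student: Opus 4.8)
The plan is to prove the identity as an equality of edge sets, working directly from the parametrization of the matchings in Definition \ref{defMatching}. First I would record that $M_v(2N)$ is well-defined: since $N$ is a positive multiple of $2\varphi(v)$, so is $2N$, and by definition the edges of $M_v(2N)$ are exactly the pairs $\{(v, 2a\sigma(v)+i), (v,(2a+1)\sigma(v)+i)\}$ as $a$ ranges over $\{0,1,\dots,\frac{2N}{2\varphi(v)}-1\} = \{0,1,\dots,\frac{N}{\varphi(v)}-1\}$ and $i$ ranges over $\{0,\dots,\varphi(v)-1\}$. The whole argument then amounts to partitioning this range of the index $a$ into its lower and upper halves and identifying each half.

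For the lower half, where $a \in \{0,\dots,\frac{N}{2\varphi(v)}-1\}$, the defining formula for $M_v(2N)$ coincides verbatim with the defining formula for $M_v(N)$, so these edges are exactly $M_v(N)$. For the upper half, where $a \in \{\frac{N}{2\varphi(v)}, \dots, \frac{N}{\varphi(v)}-1\}$, I would substitute $a = b + \frac{N}{2\varphi(v)}$ with $b \in \{0,\dots,\frac{N}{2\varphi(v)}-1\}$. Each such edge then becomes $\{(v, 2b\sigma(v) + q\sigma(v) + i), (v, (2b+1)\sigma(v) + q\sigma(v) + i)\}$, where $q = \frac{N}{\varphi(v)}$; that is, it is precisely the edge of $M_v(N)$ indexed by $(b,i)$, translated by the group element $q\sigma(v)$ under the automorphism $(v,t)\mapsto(v,t+q\sigma(v))$. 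Hence the upper half is the translate $M_v(N) + q\sigma(v)$, and since the two halves of the range for $a$ are disjoint, combining them gives $M_v(2N) = M_v(N) \cup (M_v(N) + q\sigma(v))$.

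It remains to identify the translation amount $q\sigma(v)$ with $N$, and this is the one step that uses more than bookkeeping. Because $q = \frac{N}{\varphi(v)}$ is an integer (as $N$ is a multiple of $2\varphi(v)$) and $\sigma(v) = \psi(\varphi(v))$ is the image in $\mathbb{Z}_p$ of the integer label $\varphi(v)$ fixed in the setup of this section, we have $q\sigma(v) = q\,\psi(\varphi(v)) = \psi(q\varphi(v)) = \psi(N) = N$ in $\mathbb{Z}_p$, so $M_v(N) + q\sigma(v) = M_v(N) + N$, which completes the proof. I expect this final identification to be the only real subtlety: a naive reindexing produces a translate by $\frac{N}{\varphi(v)}\sigma(v)$, and one must use the relationship $\sigma(v) = \psi(\varphi(v))$ between the integer label and the corresponding group element to recognize that this translate is by $N$ rather than by some other element. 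Everything else reduces to the routine observations that doubling a multiple of $2\varphi(v)$ stays a multiple of $2\varphi(v)$ and that $\frac{N}{2\varphi(v)}$ is an integer.
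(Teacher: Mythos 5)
Your proof is correct, and it takes essentially the same route as the paper: the Observation is stated there without proof precisely because it is the routine verification from Definition~\ref{defMatching} that you carry out, splitting the index range of $a$ for $M_v(2N)$ into its lower half (giving $M_v(N)$ verbatim) and its upper half (giving a translate of $M_v(N)$). Your one substantive step, identifying the translation amount $\frac{N}{\varphi(v)}\sigma(v)$ with $N$ via $\sigma(v)=\psi(\varphi(v))$ and $\psi\bigl(\frac{N}{\varphi(v)}\cdot\varphi(v)\bigr)=\psi(N)$, is exactly the point the authors leave implicit, and your bookkeeping (including that $\frac{N}{\varphi(v)}$ is an integer because $N$ is a multiple of $2\varphi(v)$) is sound.
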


%{\LS These are nice observations, indeed!}

%In our proof of Theorems \ref{thmLeaf} and \ref{thm1}, w

In the following lemma, we show that for any vertex $v \in V(T)$, $T^{\sigma}$ has a $2$-factor $H$ 
each of whose components contains at least one edge in a certain local structure of the fiber $v^{\sigma}$. 
This will be useful when we attempt to ``attach" components of a $2$-factor together into a Hamiltonian cycle, as we will know that an edge of every component can be found in a specific part of $T^{\sigma}$.

\begin{lemma}
\label{lemma:twoFactor}
%Let $T$ be a reflexive tree, and let $\phi$ be an assignment of integers to $E(T)$. Then 
Suppose that $p$ is a sufficiently large prime in terms of $T$ and $\phi$. Then, 
there exists a positive integer $N=N(\phi,T)$ such that
for each vertex $v \in V(T)$,
%and for all sufficiently large $p$, 
$T^{\sigma}$ contains a $2$-factor $H$
satisfying $M_v(N) \subseteq E(H)$, and such that each component of $H$ has an edge in the matching $M_v(N)$. Furthermore, $H$ may be chosen such that $H$ contains every edge of $v^{\sigma}[N,p-1]$. 
\end{lemma}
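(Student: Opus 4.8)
The plan is to construct the required $2$-factor $H$ explicitly, growing it outward across the tree one fiber at a time. First I would invoke Lemma \ref{lemZero} to assume that every edge of $T$ carries voltage $0$, so that each tree edge $[x,y]$ lifts to the vertical perfect matching $\{[(x,g),(y,g)]:g\in\mathbb Z_p\}$ and each fiber $w^\sigma$ is a single $p$-cycle whose consecutive vertices differ by $\sigma(w)$. For $p>\max_w\varphi(w)$ we have $\sigma(w)=\varphi(w)$, so each $M_w(N)$ is a perfect matching of $w^\sigma[0,N-1]$ that restricts to an alternating consecutive edge set in every residue class modulo $\varphi(w)$. I would then fix $N$ to be a common multiple of $\{2\varphi(w):w\in V(T)\}$ that is large enough to reserve the matchings needed at each vertex (made precise below); crucially this $N$ depends only on $T$ and $\phi$, uniformly in $v$ and in all large $p$.

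A useful preliminary reduction: since $M_v(N)\subseteq E(H)$ and the second $H$-edge at each matched root vertex leaves $v^\sigma$ vertically, a component of $H$ meets $M_v(N)$ if and only if it meets the region $v^\sigma[0,N-1]$. So it suffices to build a $2$-factor of $T^\sigma$ that contains $M_v(N)$ together with all of $v^\sigma[N,p-1]$ in the root fiber and whose every component meets $v^\sigma[0,N-1]$.

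For the construction I would root $T$ at $v$ and process the fibers in breadth-first order from $v$. I start with $H$ equal to the single cycle $v^\sigma$, permanently designating $M_v(N)$ and leaving all of $v^\sigma[N,p-1]$ untouched. To attach a child fiber $x^\sigma$ across a tree edge $[w,x]$ whose parent end $w$ is already placed, I apply the join suggested by Definition \ref{defMatching} and Observation \ref{obsDJ}: delete an alternating consecutive matching of the prescribed form from a reserved block of $w^\sigma$ and a correspondingly shifted matching from $x^\sigma$, and insert the vertical rungs $[(w,g),(x,g)]$ on the overlapping positions. Using Observation \ref{obsDJ} and its iterates, I can reserve pairwise edge-disjoint shifted matchings inside each fiber, one for each incident tree edge, once $N$ is large enough in terms of $\varphi(w)$ and $\deg_T(w)$; this is where the uniform largeness of $N$ is used. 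Each join replaces one deleted fiber edge at a vertex by exactly one vertical edge, so the degree-$2$ property is preserved throughout, and the far region $v^\sigma[N,p-1]$ of the root is never disturbed.

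The main obstacle is verifying the connectivity invariant: that every join merges the newly attached fiber into components that already reach $v^\sigma[0,N-1]$, so that no join ever closes off a short cycle disconnected from the root. This is precisely why the deleted edge sets are taken to be alternating consecutive (color classes of a $2$-edge-colored path) and offset by $\sigma(w)$ as in Observation \ref{obsDJ}: removing two such offset, edge-disjoint matchings and bridging them by vertical rungs weaves the two cycles along a single strand rather than pinching off $4$-cycles, and the same mechanism must be checked to hook the endpoints of the far-region paths of $v^\sigma[N,p-1]$ back into the matched components. Carrying out this component analysis through an arbitrary sequence of joins, and handling the boundary positions where the offset supports fail to align, is the technical heart of the argument; once it is in place, the degree bookkeeping and the determination of $N$ are routine, and the three asserted properties of $H$ in Lemma \ref{lemma:twoFactor} follow immediately.
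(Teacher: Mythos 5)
Your toolkit is the same as the paper's (Lemma \ref{lemZero}, the matchings of Definition \ref{defMatching}, Observation \ref{obsDJ}, and rung-swaps between adjacent fibers), but the proposal has a genuine gap at exactly the point you yourself flag as ``the technical heart'': you never prove that every component of the final $2$-factor meets $M_v(N)$; you only announce that a ``component analysis through an arbitrary sequence of joins'' remains to be done. This is not a routine verification, and the invariant you propose to maintain --- that after each BFS join every component reaches $v^{\sigma}[0,N-1]$ --- is not maintainable by local joins. When you attach a child fiber $x^{\sigma}$ to an internal fiber $w^{\sigma}$ with $\sigma(x)\neq\pm\sigma(w)$, the deleted matchings in the two fibers have different steps, so a block of rungs generically splits the union into several cycles, some of which can be confined to the vicinity of the join, alternating between rungs and short surviving segments of $w^{\sigma}$ and $x^{\sigma}$. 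Such components live in the reserved block of $w^{\sigma}$, not in the root window, and nothing in your scheme later absorbs them. (As a sanity check: if splitting never occurred, your BFS would output a single Hamiltonian cycle for every tree and every nonzero labelling, answering Question \ref{question:bigPrime} affirmatively --- which the paper explicitly leaves open.)

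The paper closes this hole with a differently shaped recursion, and the shape is what matters. Its inductive statement is strengthened at the \emph{children}: each component $T_i$ of $T-v$ carries a $2$-factor $F_i$ together with a designated matching $M_{u_i}(N_i)$ at the subtree root $u_i$ that meets \emph{every} component of $F_i$ (plus containment of all of $u_i^{\sigma}[N_i,p-1]$, which lets the matchings be shifted into position). When the subtrees are joined to $v^{\sigma}$, the matching removed from each child fiber contains the designated one, so every component of every $H_i$ loses at least one edge; hence $H\cap v^{\sigma}$ and each $H\cap H_i$ are forests of paths, and $2$-regularity forces every component of $H$ to contain a vertical rung. The $v$-end of each rung has voltage in the window $S=\{\sigma(v),\dots,\sigma(v)+rN'-1\}$, and each such vertex is an endpoint of a removed edge of $M_v(rN')+\sigma(v)$, so its surviving fiber edge lies in $M_v(N)$ --- this is Claim \ref{claim:mvn}. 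Your top-down BFS establishes no analogue of ``the removed matching meets every existing component,'' so the connectivity claim cannot be recovered as proposed; repairing it essentially forces you into the paper's bottom-up induction carrying the matching condition at each subtree root.
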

\begin{proof}
We prove the lemma by induction on $|V(T)|$. When $|V(T)| = 1$, $T^{\sigma}$ is a cycle, so by letting $H = T^{\sigma}$, the lemma holds for $N = 2 \varphi(v)$.

Now, suppose $|V(T)| > 1$. Let $v \in V(T)$, and 
%As we may apply an automorphism to any $2$-factor in $T^{\sigma}$, it will be enough to prove the lemma for a ``shift" of the matching described in the lemma's statement. 
%Recall that by Lemma \ref{lemZero}, we assume that $\phi(e) = 0$ for each cut-edge $e \in E(T)$. 
let $v$ have neighbors $u_0, \dots, u_{r-1}$ in $T$. We let $T \setminus \{v\}$ have components $T_0, \dots, T_{r-1}$, so that $u_i \in V(T_i)$ for each $i \in \{ 0, \dots, r-1 \}$. 
By the induction hypothesis, for each $i \in \{0, \dots, r-1\}$,
we may choose an integer $N_i = N_i(\phi_{|T_i}, T_i)$ and a $2$-factor $F_i$ on $T_i$ such that the matching $M_{u_i}(N_i)$ is a subset of $E(F_i)$ and contains an edge of each component of $F_i$, 
and such that $F_i$ contains every edge of $u_i^{\sigma}[N_i, p-1]$.
Then, we define
\[N' = 2\varphi(v)N_1 N_2 \dots N_r,\] 
and by applying automorphisms to each $F_i$, it follows that for each $i \in \{0, \dots, r-1\}$, we may find a $2$-factor $H_i$ on $T_i$ such that the matching $M_{u_i}(N_i) + iN' + \sigma(v)$ is a subset of $E(H_i)$ and contains an edge of each component of $H_i$. Furthermore, since $F_i$ contains every edge of $u_i^{\sigma}[N_i, p-1]$, and since $p$ is sufficiently large, we can also assume that $M_{u_i}(N') + iN' + \sigma(v) \subseteq E(H_i)$.

%we may find
%integers $N_
%$2$-factors $F_0, \dots, F_{r-1}$ on $T_0, \dots, T_{r-1}$, respectively, such that for each $i \in \{0, \dots, r-1\}$, the matching $M_{u_i}(N_i) + iN' + \sigma(v)$ contains an edge of each component of $F_i$, for some large enough $N_i$.

%By the induction hypothesis and by applying automorphisms, we may find $2$-factors $F_0, \dots, F_{r-1}$ on $T_0, \dots, T_{r-1}$, respectively, such that for each $i \in \{0, \dots, r-1\}$, the matching $M_{u_i}(N_i) + iN' + \sigma(v)$ contains an edge of each component of $F_i$, for some large enough $N_i$. 
%By applying automorphisms on $T_0, \dots, T_{r-1}$, we may assume that each component of each $F_i$ has an edge in the matching $M_{u_i}(N) + iN$.

Now, we consider the graph $H \subseteq  T^{\sigma} $ consisting of the edges of the cycle $v^{\sigma}$ as well as $E(H_0) \cup \dots \cup E(H_{r-1})$. As $H$ is a union of the $2$-factors $H_0, \dots, H_{r-1}$ as well as the cycle $v^{\sigma}$, $H$ is $2$-regular. We
modify the graph $H$ as follows.
For each $i \in \{0, \dots, r-1\}$, we add the edges 
\[[(v,\sigma(v) + iN' + a), (u_i, \sigma(v) + iN' + a)]\] 
for each value $a \in \{ 0, \dots, N'-1\}$. Then, for each $i \in \{0, \dots, r-1\}$, we remove the matching $M_{u_i}(N') + iN' + \sigma(v)$ from $E(H)$, and we 
also remove the matching $M_v(rN') + \sigma(v)$ from $E(H)$. This leaves us with a $2$-factor $H$ in $T^{\sigma}$.
We show an example of the construction of $H$ in Figure \ref{figGrowingTree}. In the figure, $r = 2$, $\varphi(u_1) = 1$, $\varphi(u_2) = \varphi(v) = 2$, and $N' = 8$.

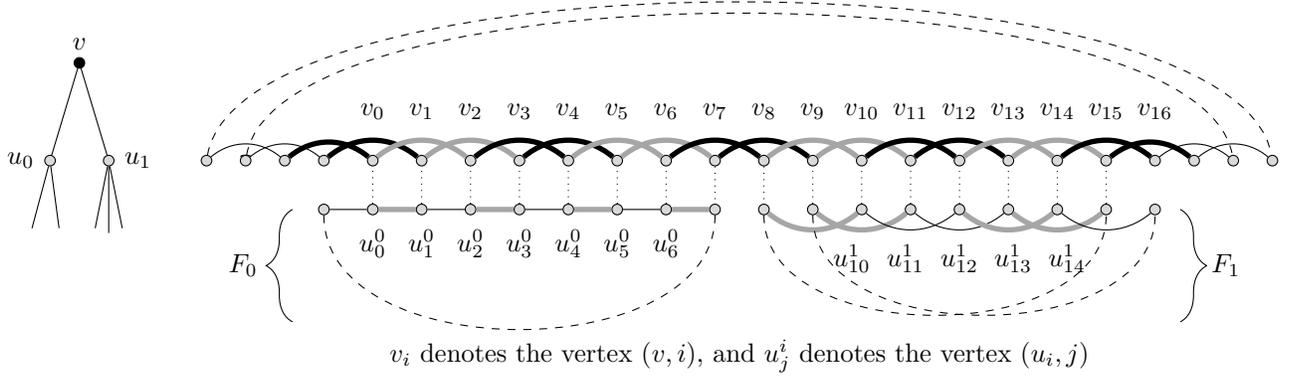
\begin{figure}
\begin{center}
\begin{tikzpicture}
  [scale=0.65,auto=left,every node/.style={circle,fill=gray!30},minimum size = 4pt,inner sep=0pt]
  \clip (-4.5,-4.5) rectangle + (33,7.8);
 %[scale=2.2,auto=left,every node/.style={circle,fill=gray!30,minimum size = 6pt,inner sep=0pt}]
%\node (z) at (0,-0.5) [draw = white, fill = white] {$T$};
%\node (z) at (1.5,-1.5) [draw = white, fill = white] {};

%%%%%TREEEEEEEE
\node (z) at (-3,2.4) [draw = white, fill = white] {$v$};

\node (z) at (-4.2,0) [draw = white, fill = white] {$u_0$};
\node (z) at (-1.8,0) [draw = white, fill = white] {$u_1$};

%\node (z) at (1.5,-1.5) [draw = white, fill = white] {};
%\node (a1) at (0,0) [draw = black] {};
%\node (a2) at (1,0) [draw = black] {};
\node (a3) at (-3.4,-1.5) [draw = white, fill = white] {};
\node (a4) at (-3.6,0) [draw = black] {};
\node (a5) at (-3,2) [draw = black, fill = black] {};
\node (a6) at (-4,-1.5) [draw = white, fill = white] {};
\node (a8) at (-2.4,0) [draw = black] {};
%\node (a9) at (5,0.5) [draw = black] {};
\node (a10) at (-2.4,-1.6) [draw = white,fill=white] {};
\node (c1) at (-2.7,-1.5) [draw = white,fill=white] {};
\node (c2) at (-2.1,-1.5) [draw = white,fill=white] {};

%\node (b1) at (3.2,-1.15) %[draw = black] {};
%\node (b2) at (3,-0.8) [draw = black] {};
%\node (b3) at (3.4,-1.5) [draw = black] {};
    %b1/b3,b1/b4,b3/b4,b3/b2,b4/b2
    %,d2/b1,b2/f2
\foreach \from/\to in {
%a1/a2,a2/a3,
a3/a4,a4/a5,a4/a6,a5/a8,a8/a10,a8/c1,a8/c2}
    \draw (\from) -- (\to);
%%%%END TREEEEE

\node (z) at (10.5,-4) [draw = white, fill = white] {$v_i$ denotes the vertex $(v,i)$, and $u^i_j$ denotes the vertex $(u_i, j)$};

\node (z) at (3,1) [draw = white, fill = white] {$v_0$};
\node (z) at (4,1) [draw = white, fill = white] {$v_1$};
\node (z) at (5,1) [draw = white, fill = white] {$v_2$};
\node (z) at (6,1) [draw = white, fill = white] {$v_3$};
\node (z) at (7,1) [draw = white, fill = white] {$v_4$};
\node (z) at (8,1) [draw = white, fill = white] {$v_5$};
\node (z) at (9,1) [draw = white, fill = white] {$v_6$};
\node (z) at (10,1) [draw = white, fill = white] {$v_7$};
\node (z) at (11,1) [draw = white, fill = white] {$v_8$};
\node (z) at (12,1) [draw = white, fill = white] {$v_9$};
\node (z) at (13,1) [draw = white, fill = white] {$v_{10}$};
\node (z) at (14,1) [draw = white, fill = white] {$v_{11}$};
\node (z) at (15,1) [draw = white, fill = white] {$v_{12}$};
\node (z) at (16,1) [draw = white, fill = white] {$v_{13}$};
\node (z) at (17,1) [draw = white, fill = white] {$v_{14}$};
\node (z) at (18,1) [draw = white, fill = white] {$v_{15}$};
\node (z) at (19,1) [draw = white, fill = white] {$v_{16}$};

\node (z) at (3,-1.7) [draw = white, fill = white] {$u^0_{0}$};
\node (z) at (4,-1.7) [draw = white, fill = white] {$u^0_{1}$};
\node (z) at (5,-1.7) [draw = white, fill = white] {$u^0_{2}$};
\node (z) at (6,-1.7) [draw = white, fill = white] {$u^0_{3}$};
\node (z) at (7,-1.7) [draw = white, fill = white] {$u^0_{4}$};
\node (z) at (8,-1.7) [draw = white, fill = white] {$u^0_{5}$};
\node (z) at (9,-1.7) [draw = white, fill = white] {$u^0_{6}$};

\node (z) at (12.8,-2) [draw = white, fill = white] {$u^1_{10}$};
\node (z) at (13.9,-2) [draw = white, fill = white] {$u^1_{11}$};
\node (z) at (15,-2) [draw = white, fill = white] {$u^1_{12}$};
\node (z) at (16.1,-2) [draw = white, fill = white] {$u^1_{13}$};
\node (z) at (17.2,-2) [draw = white, fill = white] {$u^1_{14}$};

%\node (am1) at (-1.2,0) [draw = black] {};
\node (a0) at (-0.4,0) [draw = black] {};
\node (a1) at (0.4,0) [draw = black] {};
\node (a2) at (1.2,0) [draw = black] {};
\node (a3) at (2,0) [draw = black] {};
\node (a4) at (3,0) [draw = black] {};
\node (a5) at (4,0) [draw = black] {};
\node (a6) at (5,0) [draw = black] {};
\node (a7) at (6,0) [draw = black] {};
\node (a8) at (7,0) [draw = black] {};
\node (a9) at (8,0) [draw = black] {};
\node (a10) at (9,0) [draw = black] {};
\node (a11) at (10,0) [draw = black] {};
\node (a12) at (11,0) [draw = black] {};
\node (a13) at (12,0) [draw = black] {};
\node (a14) at (13,0) [draw = black] {};
\node (a15) at (14,0) [draw = black] {};
\node (a16) at (15,0) [draw = black] {};
\node (a17) at (16,0) [draw = black] {};
\node (a18) at (17,0) [draw = black] {};
\node (a19) at (18,0) [draw = black] {};
\node (a20) at (19,0) [draw = black] {};
\node (a21) at (19.8,0) [draw = black] {};
\node (a22) at (20.6,0) [draw = black] {};
\node (a23) at (21.4,0) [draw = black] {};

\draw [dashed] (a1) to  [out=80,in=100,looseness = 0.5] (a22) ;
\draw [dashed] (a0) to  [out=80,in=100,looseness = 0.5] (a23) ;
\draw (a1) to  [out=40,in=140] (a1) ;
\draw (a0) to  [out=40,in=140] (a2) ;
\draw (a1) to  [out=40,in=140] (a3) ;
\draw [line width = 2pt, black](a2) to  [out=40,in=140] (a4) ;
\draw [line width = 2pt, black](a3) to  [out=40,in=140] (a5) ;
\draw [line width = 2pt, gray!70!white](a4) to  [out=40,in=140] (a6) ;
\draw [line width = 2pt, gray!70!white](a5) to  [out=40,in=140] (a7) ;
\draw [line width = 2pt, black](a6) to  [out=40,in=140] (a8) ;
\draw [line width = 2pt, black](a7) to  [out=40,in=140] (a9) ;
\draw [line width = 2pt, gray!70!white](a8) to  [out=40,in=140] (a10) ;
\draw [line width = 2pt, gray!70!white](a9) to  [out=40,in=140] (a11) ;
\draw [line width = 2pt, black](a10) to  [out=40,in=140] (a12) ;
\draw [line width = 2pt, black](a11) to  [out=40,in=140] (a13) ;
\draw [line width = 2pt, gray!70!white](a12) to  [out=40,in=140] (a14) ;
\draw [line width = 2pt, gray!70!white](a13) to  [out=40,in=140] (a15) ;
\draw [line width = 2pt, black](a14) to  [out=40,in=140] (a16) ;
\draw [line width = 2pt, black](a15) to  [out=40,in=140] (a17) ;
\draw [line width = 2pt, gray!70!white](a16) to  [out=40,in=140] (a18) ;
\draw [line width = 2pt, gray!70!white](a17) to  [out=40,in=140] (a19) ;
\draw [line width = 2pt, black](a18) to  [out=40,in=140] (a20) ;
\draw [line width = 2pt, black](a19) to  [out=40,in=140] (a21) ;
\draw (a20) to  [out=40,in=140] (a22) ;
 \draw (a21) to  [out=40,in=140] (a23) ;

\node (b3) at (2,-1) [draw = black] {};
\node (b4) at (3,-1) [draw = black] {};
\node (b5) at (4,-1) [draw = black] {};
\node (b6) at (5,0-1) [draw = black] {};
\node (b7) at (6,0-1) [draw = black] {};
\node (b8) at (7,0-1) [draw = black] {};
\node (b9) at (8,0-1) [draw = black] {};
\node (b10) at (9,0-1) [draw = black] {};
\node (b11) at (10,0-1) [draw = black] {};
\node (b12) at (11,0-1) [draw = black] {};
\node (b13) at (12,0-1) [draw = black] {};
\node (b14) at (13,0-1) [draw = black] {};
\node (b15) at (14,0-1) [draw = black] {};
\node (b16) at (15,0-1) [draw = black] {};
\node (b17) at (16,-1) [draw = black] {};
\node (b18) at (17,-1) [draw = black] {};
\node (b19) at (18,-1) [draw = black] {};
 \node (b20) at (19,-1) [draw = black] {};
 \foreach \from/\to in {b4/a4,b5/a5,b6/a6,b7/a7,b8/a8,b9/a9,b10/a10,a11/b11,a12/b12,b13/a13,b14/a14,b15/a15,b16/a16,b17/a17,b18/a18,b19/a19}
    \draw[dotted] (\from) -- (\to);

     \foreach \from/\to in {b3/b4,b4/b5,b5/b6,b6/b7,b7/b8,b8/b9,b9/b10,b10/b11}
    \draw (\from) -- (\to);
 
 \draw [line width = 2pt, gray!70!white] (b12) to  [out=320,in=220] (b14) ;
 \draw  [line width = 2pt, gray!70!white] (b13) to  [out=320,in=220] (b15) ;
 \draw  (b14) to  [out=320,in=220] (b16) ;
 \draw  (b15) to  [out=320,in=220] (b17) ;
 \draw [line width = 2pt, gray!70!white](b16) to  [out=320,in=220] (b18) ;
 \draw [line width = 2pt, gray!70!white](b17) to  [out=320,in=220] (b19) ;
 \draw  (b18) to  [out=320,in=220] (b20) ;
  \draw [line width = 2pt, gray!70!white] (b4) to  (b5) ;
  \draw [line width = 2pt, gray!70!white] (b6) to  (b7) ;
  \draw [line width = 2pt, gray!70!white] (b8) to  (b9) ;
  \draw [line width = 2pt, gray!70!white] (b10) to  (b11) ;
 % \draw [line width = 2pt, black] (b5) to  (b6) ;
  %\draw [line width = 2pt, black] (b7) to  (b8) ;
  %\draw [line width = 2pt, black] (b9) to  (b10) ;
  %\draw [line width = 2pt, black] (b3) to  (b4) ;
   \draw [dashed] (b3) to  [out=270,in=270] (b11) ;
    \draw [dashed] (b12) to  [out=270,in=270] (b19) ;
        \draw [dashed] (b13) to  [out=270,in=270] (b20) ;

        \draw[decorate,decoration={brace,amplitude=10pt},xshift=-4pt,yshift=0pt]
(1.5,-3.3) -- (1.5,-1) node [white,midway,xshift=-0.6cm] 
{};
   \node (z) at (0.35,-2.15) [draw = white, fill = white] {$F_0$};
\draw [decorate,decoration={brace,amplitude=10pt,mirror,raise=4pt},yshift=0pt]
(19.3,-3.3) -- (19.3,-1) node [white,midway,xshift=0.8cm] {};
           \node (z) at (20.45,-2.15) [draw = white, fill = white] {$F_1$};
        
\end{tikzpicture}
\end{center}
\caption{The first figure shows part of a reflexive tree $T$ with a voltage assignment $\sigma$ and a specified vertex $v$ which has two neighbors $u_0$ and $u_1$. 
The second figure shows parts of the fibers $v^{\sigma}$, $u_0^{\sigma}$, and $u_1^{\sigma}$, with the dashed lines representing the undrawn parts of the fibers. 
We may seek a $2$-factor $F$ each of whose components has an edge in a certain matching in $v^{\sigma}$ as follows.
We find $2$-factors $F_0$ and $F_1$ in the two components of $T \setminus \{v\}$, which are shown as cycles. 
We then may remove a matching from each of $F_0$ and $F_1$, as well as from $v^{\sigma}$ and add the dotted edges between $v^{\sigma}$ and the other fibers. The edges that we remove are shown in gray. 
We then obtain a larger $2$-factor $H$, each of whose components contain at least one of the edges in the bolded matching shown in the second figure.}
\label{figGrowingTree}
\end{figure}

We wish to show that there exists some integer $N$ for which $N$ and $H$ satisfy the conditions of the lemma.
%some matching $M$ in $v^{\sigma}$ of the form $M_v(N)$ for some integer $N$ that belongs to $E(H)$ and contains an edge of every component of $H$. 
We will show that $N = rN' + 2\varphi(v)$ is a suitable choice in the next two claims, and then the proof will be complete.
%Although $M$ is not exactly of the form $M_v(N')$ for an integer $N'$, we may adjust $M$ to be in this form by applying the automorphism $(v,g) \mapsto (v,g+\sigma(v))$ to $T^{\sigma}$. Therefore, 
%The next two claims will be devoted to showing that $M$ belongs to $E(H)$ and contains an edge of every component of $H$.

\begin{claim}
\label{claim:np1}
$ M_v(N) \subseteq E(H)$. Furthermore, $H$ contains every edge of $v^{\sigma}[N,p-1]$.
\end{claim}
\begin{proof}[Proof of claim:]
By construction, $E(H)$ contains every edge of $v^{\sigma}$ except for those contained in $M_v(rN') + \sigma(v)$, which immediately proves the second statement. For the first statement, it suffices to show that $ M_v(N)$ is disjoint from $M_v(rN') + \sigma(v)$. As 
$$M_v(rN')+ \sigma(v) \subseteq  M_v(rN' + 2\varphi(v))+ \sigma(v) = M_v(N) + \sigma(v) $$
it suffices to show that $ M_v(N)$ is disjoint from $ M_v(N)+ \sigma(v)$. However, this statement follows directly from Observation \ref{obsDJ}. Hence, we see that when we remove $M_v(rN')+ \sigma(v)$ from $v^{\sigma}$, we do not remove any edges from $ M_v(N) $. Therefore, each edge of $ M_v(N)  $ belongs to $E(H)$. \end{proof}

\begin{claim}
\label{claim:mvn}
Each component of $H$ has an edge in $ M_v(N)$. 
\end{claim}
\begin{proof}[Proof of claim:]
By construction, $E(H) \cap E(v^{\sigma}) = E(v^{\sigma}) \setminus (M_v(rN') + \sigma(v)). $
We first observe that the group values of the endpoints of $ M_v(rN') + \sigma(v)  $ make up the subset of $\mathbb Z_p$ 
$$S = \{\sigma(v), \dots, \sigma(v) +  rN' - 1 \}.$$ 
Furthermore, $S$ is a subset of the group values of the vertex set $V(M_v(N))$. Therefore, as $H$ is $2$-regular, any component in $H$ that contains a vertex $(v,b)$ for $b \in S$ also has an edge in $M_v(N) $.

Now, we show that each component of $H$ has at least one edge in the 
%fiber $v^{\sigma}$. 
matching $M_v(N)$.
We note that $H \cap v^{\sigma}$ is a forest of paths, as is $H \cap H_i$ for each $2$-factor $H_i$ obtained from the induction hypothesis. Therefore, since $H$ is $2$-regular, every component $C$ of $H$ contains both a vertex from $v^{\sigma}$ and a vertex from a $2$-factor $H_i$. Hence, it follows that $C$ contains an edge of the form 
\[[(v,\sigma(v) + iN' + a), (u_i, \sigma(v) + iN' + a)]\] 
for some value $i \in \{0, \dots, r-1\}$ and some value $a \in \{0,\dots,N'-1\}$. Since $b = \sigma(v) + iN' + a \in S$, it then follows from the argument above that $C$ contains an edge in $M_v(N)$. This completes the proof of the claim. 
%Indeed, suppose some component $C$ of $H$ does not have an edge in $v^{\sigma}$. 
%Since $H$ is $2$-regular, $C$ must be a cycle, and it must follow that $E(C) \subseteq E(H_i)$ for some $i \in \{0, \dots, r-1\}$,  as $H \cap v^{\sigma}$ is a forest of paths. However, by construction, every component of $H_i$ contains an edge in the matching $M_{u_i}(N') + iN' + \sigma(v)$, which was removed while constructing $H$. Therefore, $H \cap H_i$ is a forest of paths, which contradicts the assumption that $E(C) \subseteq E(H_i)$. Hence, we conclude that each component of $H$ contains at least one edge in $v^{\sigma}$. This completes the proof of Lemma \ref{lemma:twoFactor}.
\end{proof}
Claims \ref{claim:np1} and \ref{claim:mvn} imply the lemma statement, completing the proof of the lemma.
\end{proof}

Now we are ready to prove Theorem \ref{thmLeaf}. 
\begin{proof}[Proof of Theorem \ref{thmLeaf}]
Let $T$ be a reflexive tree, and let $v$ be a vertex of $T$ with $r$ neighbors $u_0, \dots, u_{r-1}$. Furthermore, for each neighbor $u_i$ of $v$, let $\phi(u_i) = \phi(v)$. For each $i \in \{0, \dots, r-1\}$, let $T_i$ be the component of $T \setminus \{v\}$ containing $u_i$. By Lemma \ref{lemma:twoFactor}, for each $i \in \{0, \dots, r-1\}$, we may find a $2$-factor $F_i$ of $T_i^{\sigma}$, along with an integer $N_i = N_i(\phi_{|T_i}, T_i)$, such that each component of $F_i$ has at least one edge in $M_{u_i}(N_i)$ and such that each edge induced by $u_i^{\sigma}[N_i, p-1]$ belongs to $E(F_i)$. 
We let 
\[N = 2\varphi(v)N_1 N_2 \dots N_r.\]
By applying automorphisms on $T_0, \dots, T_{r-1}$, we may then obtain for each $i \in \{0, \dots, r-1\}$ a $2$-factor $H_i$, each of whose components contains at least one edge in the matching $M_{u_i}(N) + iN$.
%In particular, the group elements of the endpoints of these matchings are disjoint.

Now, for each $i \in \{0, \dots, r-1\}$ and each component $C$ of $F_i$, we choose an edge $$[(u_i, a),(u_i,a + \sigma(v))] \in M_{u_i}(N) + iN$$ of $C$, by choosing an appropriate $a$.
(Note that this is possible, since $\sigma(u_i) = \sigma(v)$.)
Then, we remove the edges $[(u_i, a),(u_i,a + \sigma(v))]$ and $[(v, a),(v,a + \sigma(v))]$ from $C$ and $v^{\sigma}$, respectively, and add the edges $[(u_i,a),(v_i,a)]$ and $[(u_i, a+ \sigma(v)),(v,a + \sigma(v))]$. %We illustrate this process in Figure \ref{figExtend}. 
By following this process for each $i$, we begin with a cycle $A = v^{\sigma}$, and each time we replace edges for a component $C$ as described above, we extend $A$ to include every vertex of $C$. By repeating this process for every component of every $2$-factor $H_i$, $A$ is extended to a cycle that visits every vertex of $T^{\sigma}$. Therefore, $T^{\sigma}$ is Hamiltonian. Finally, note that a sufficient bound on $p$ is  $p > 2^{|V(T)|} \prod_{v \in V(T)} \phi(v)$.
 \end{proof}
 
 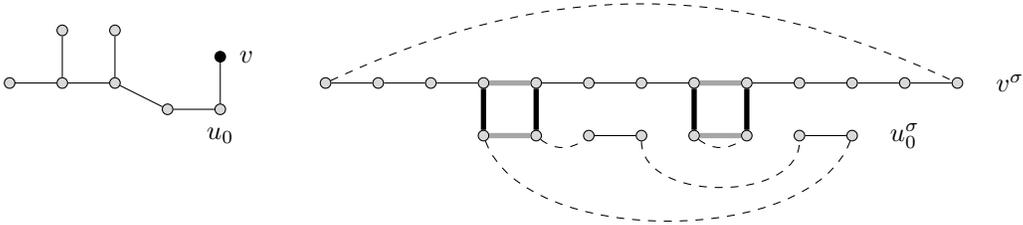
\begin{figure}
 \begin{center}
\begin{tikzpicture}
    [scale=0.7,auto=left,every node/.style={circle,fill=gray!30},minimum size = 4pt,inner sep=0pt]
 \clip (1,-2.65) rectangle + (21.5,4.2);

%%%%TREEEEEEEE
\node (z) at (6.5,0.5) [draw = white, fill = white] {$v$};
\node (z) at (6,-1) [draw = white, fill = white] {$u_0$};

%\node (z) at (1.5,-1.5) [draw = white, fill = white] {};
%\node (a1) at (0,0) [draw = black] {};
%\node (a2) at (1,0) [draw = black] {};
\node (f3) at (2,0) [draw = black] {};
\node (f4) at (3,0) [draw = black] {};
\node (f5) at (4,0) [draw = black] {};
\node (f6) at (4,1) [draw = black] {};
\node (f7) at (3,1) [draw = black] {};
%\node (f9) at (4,0.5) [draw = black] {};
\node (f10) at (5,-0.5) [draw = black] {};
\node (h1) at (6,-0.5) [draw = black] {};
\node (h2) at (6,0.5) [draw = black, fill = black] {};

\foreach \from/\to in {
%a1/a2,a2/a3,
f3/f4,f4/f5,f5/f6,f4/f7,f5/f10,f10/h1,h1/h2}
    \draw (\from) -- (\to);
%%%%END TREE

\node (z) at (21,0) [draw = white, fill = white] {$v^{\sigma}$};
\node (z) at (19,-1) [draw = white, fill = white] {$u_0^{\sigma}$};
\node (z) at (21,0) [draw = white, fill = white] {$v^{\sigma}$};
%\node (z) at (18.5,-2.4) [draw = white, fill = white] {$T_0^{\sigma}$};

\node (a9) at (8,0) [draw = black] {};
\node (a10) at (9,0) [draw = black] {};
\node (a11) at (10,0) [draw = black] {};
\node (a12) at (11,0) [draw = black] {};
\node (a13) at (12,0) [draw = black] {};
\node (a14) at (13,0) [draw = black] {};
\node (a15) at (14,0) [draw = black] {};
\node (a16) at (15,0) [draw = black] {};
\node (a17) at (16,0) [draw = black] {};
\node (a18) at (17,0) [draw = black] {};
\node (a19) at (18,0) [draw = black] {};
\node (a20) at (19,0) [draw = black] {};
\node (a21) at (20,0) [draw = black] {};

\draw [dashed] (a9) to  [out=25,in=155] (a21) ;

\node (b12) at (11,0-1) [draw = black] {};
\node (b13) at (12,0-1) [draw = black] {};
\node (b14) at (13,0-1) [draw = black] {};
\node (b15) at (14,0-1) [draw = black] {};
\node (b16) at (15,0-1) [draw = black] {};
\node (b17) at (16,-1) [draw = black] {};
\node (b18) at (17,-1) [draw = black] {};
\node (b19) at (18,-1) [draw = black] {};

 \foreach \from/\to in {a9/a10,a10/a11,a11/a12,a12/a13,a13/a14,a14/a15,a15/a16,a16/a17,a17/a18,a18/a19,a19/a20,a20/a21,b12/b13,b14/b15,b16/b17,b18/b19, a12/b12,b13/a13,b16/a16,b17/a17}
    \draw (\from) -- (\to);
 
 \draw [line width = 2pt, gray!70!white] (b12) to   (b13) ;
  \draw [line width = 2pt] (a12) to   (b12) ;
    \draw [line width = 2pt] (a13) to   (b13) ;
    
        \draw [line width = 2pt] (a16) to   (b16) ;
        \draw [line width = 2pt] (a17) to   (b17) ;
        
  \draw [line width = 2pt, gray!70!white] (a12) to   (a13) ;
   \draw [line width = 2pt, gray!70!white] (a16) to   (a17) ;
      \draw [line width = 2pt, gray!70!white] (b16) to   (b17) ;
  \draw [dashed] (b13) to  [out=320,in=220] (b14) ;
   \draw [dashed] (b15) to  [out=270,in=270] (b18) ;
      \draw [dashed] (b12) to  [out=290,in=250,looseness=0.8] (b19) ;
         \draw [dashed] (b16) to  [out=320,in=220] (b17) ;
\end{tikzpicture} 
\end{center}
\caption{
The first figure shows a tree $T$ with a voltage assignment $\sigma$ and a vertex $v$, all of whose neighbors (here, just $u_0$) have the same voltage as $v$. The second figure shows parts of the fibers $v^{\sigma}$ and $u^{\sigma}$ (with their vertices depicted horizontally) and the graph $T_0^{\sigma}$, with the dashed lines representing the undrawn parts of the graphs. 
$T_0^{\sigma}$ has a $2$-factor $F_0$ each of whose components has an edge in a certain local part of the fiber $u_0^{\sigma}$.
Here, we may create a Hamiltonian cycle on $T^{\sigma}$ by removing one edge in $u_0^{\sigma}$ from each cycle of $F_0$ and then replacing these edges with a matching between $u_0^{\sigma}$ and $v^{\sigma}$. Here, the removed edges are shown in gray, and the added edges are shown vertically in bold.
}
\label{fig42}
 \end{figure}

Using Lemma \ref{lemma:twoFactor}, we can also prove Theorem \ref{thm1}.
\begin{proof}[Proof of Theorem \ref{thm1}]
Let $T$ be a reflexive tree, and let $u,v \in V(T)$ be an adjacent pair of vertices for which $\phi(u) = \phi(v) = 1$. We define $T_u \subseteq T$ as the subtree obtained by removing the edge $uv$ and taking the component with $u$. We define $T_v \subseteq T$ in the same way, by taking the component containing $v$. By Lemma \ref{lemma:twoFactor}, there exists a $2$-factor $F_u$ in $T_u^{\sigma}$ in which each component of $F_u$ has an edge in the matching $M_u(N_u)$, for some integer $N_u = N_u(\phi_{|T_u},T_u)$. Similarly, there exists a $2$-factor $F_v$ in $T_v^{\sigma}$ in which each component has an edge in the matching $M_v(N_v)+N_v$, for some integer $ N_v = N_v(\phi_{|T_v},T_v) $. Note that as $\phi(u) = \phi(v) = 1$, the edges induced by $u^{\sigma}[N_u, p-1]$ all belong to a single component $A_u$ of $F_u$, and the edges induced by $v^{\sigma}[N_v + N_u, p-1]$
% \cup v^{\sigma}[0, N_u-1]$ 
all belong to a single component $A_v$ of $F_v$.

%in $u^{\sigma}$ outside of 
%$M_u(N_u)$ 
%belongs to a single cycle $C_u$, and every edge in $v^{\sigma}$ outside of $M_v(N_v)$ belongs to a single cycle $C_v$. 
We now construct a Hamiltonian cycle $H$ on $T^{\sigma}$. We first let $H = F_u \cup F_v$, and then we modify $H$ as follows.
For each component $C$ in $F_u$, we remove from $H$ an edge $[(u,i), (u,i+1)] \in E(C) \cap M_u(N_u)$, as well as the edge $[(v,i), (v,i+1)]$. Then, we add to $H$ the edges $[(u,i), (v,i)]$ and $[(u,i+1), (v,i+1)]$.
%and replace it with the edges $(u_i, v_i)$ and $(u_{i+1},v_{i+1})$. 
In doing so, we effectively ``attach" the component $C$ to $A_v$ as in Figure \ref{fig:my_label_3}. We repeat this process for every component $C$ of $F_u$, and we attach every component of $F_u$, apart from $A_u$, to $A_v$. Similarly, through this process, we attach every component of $F_v$, apart from $A_v$, to $A_u$. This leaves us with two cycles, which we may attach by choosing some value $i$ satisfying $N_u + N_v < i < p-1$, removing from $H$ the edges $[(u,i), (u,{i+1})]$ and $[(v,i), (v,{i+1})]$, and adding to $H$ the edges $[(u,i) , (v,i)]$ and $[ (u,{i+1}), (v,{i+1})]$. This is possible because we can choose $p>N_u+N_v+1$. Through this process, we obtain a single Hamiltonian cycle $H$ on $T^{\sigma}$. This completes the proof. 
\end{proof}
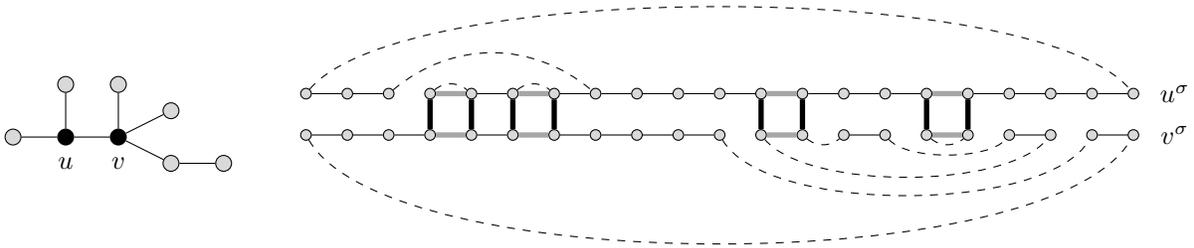
\begin{figure}
\begin{center}
\begin{tikzpicture}
  [scale=0.7,auto=left,every node/.style={circle,fill=gray!30},minimum size = 6pt,inner sep=0pt]
\node (z) at (12,1.5) [draw = white, fill = white] {$v$};
\node (z) at (11,1.5) [draw = white, fill = white] {$u$};

\node (z) at (10,0) [draw = white, fill = white] {};
%\node (a1) at (0,0) [draw = black] {};
%\node (a2) at (1,0) [draw = black] {};
\node (f3) at (10,2) [draw = black] {};
\node (f4) at (11,2) [draw = black, fill = black] {};
\node (f5) at (12,2) [draw = black, fill = black] {};
\node (f6) at (12,3) [draw = black] {};
\node (f7) at (11,3) [draw = black] {};
\node (f9) at (13,2.5) [draw = black] {};
\node (f10) at (13,1.5) [draw = black] {};
\node (h1) at (14,1.5) [draw = black] {};
\node (h2) at (15,1.5) [draw = white, fill = white] {};

\foreach \from/\to in {
%a1/a2,a2/a3,
f3/f4,f4/f5,f5/f6,f4/f7,f5/f9,f5/f10,f10/h1}%,h1/h2}
    \draw (\from) -- (\to);
    
\end{tikzpicture} 
\begin{tikzpicture}
  [scale=0.55,auto=left,every node/.style={circle,fill=gray!30},minimum size = 4pt,inner sep=0pt]
  \clip (-0.3,-3.8) rectangle + (21.8,5.95);

\node (a1) at (0,0) [draw = black] {};
\node (a2) at (1,0) [draw = black] {};
\node (a3) at (2,0) [draw = black] {};
\node (a4) at (3,0) [draw = black] {};
\node (a5) at (4,0) [draw = black] {};
\node (a6) at (5,0) [draw = black] {};
\node (a7) at (6,0) [draw = black] {};
\node (a8) at (7,0) [draw = black] {};
\node (a9) at (8,0) [draw = black] {};
\node (a10) at (9,0) [draw = black] {};
\node (a11) at (10,0) [draw = black] {};
\node (a12) at (11,0) [draw = black] {};
\node (a13) at (12,0) [draw = black] {};
\node (a14) at (13,0) [draw = black] {};
\node (a15) at (14,0) [draw = black] {};
\node (a16) at (15,0) [draw = black] {};
\node (a17) at (16,0) [draw = black] {};
\node (a18) at (17,0) [draw = black] {};
\node (a19) at (18,0) [draw = black] {};
\node (a20) at (19,0) [draw = black] {};
\node (a21) at (20,0) [draw = black] {};

\node (z) at (21,0) [draw = white, fill = white] {$u^{\sigma}$};
\node (z) at (21,-1) [draw = white, fill = white] {$v^{\sigma}$};

\draw [dashed] (a1) to  [out=50,in=130,looseness=0.45] (a21) ;

\node (b1) at (0,0-1) [draw = black] {};
\node (b2) at (1,0-1) [draw = black] {};
\node (b3) at (2,0-1) [draw = black] {};
\node (b4) at (3,0-1) [draw = black] {};
\node (b5) at (4,0-1) [draw = black] {};
\node (b6) at (5,0-1) [draw = black] {};
\node (b7) at (6,-1) [draw = black] {};
\node (b8) at (7,-1) [draw = black] {};
\node (b9) at (8,-1) [draw = black] {};
\node (b10) at (9,0-1) [draw = black] {};
\node (b11) at (10,0-1) [draw = black] {};
\node (b12) at (11,0-1) [draw = black] {};
\node (b13) at (12,0-1) [draw = black] {};
\node (b14) at (13,0-1) [draw = black] {};
\node (b15) at (14,0-1) [draw = black] {};
\node (b16) at (15,0-1) [draw = black] {};
\node (b17) at (16,-1) [draw = black] {};
\node (b18) at (17,-1) [draw = black] {};
\node (b19) at (18,-1) [draw = black] {};
\node (b20) at (19,0-1) [draw = black] {};
\node (b21) at (20,0-1) [draw = black] {};
\draw [dashed] (b1) to  [out=-60,in=-120,looseness = 0.5] (b21) ;

 \foreach \from/\to in {b1/b2,b2/b3,b3/b4,b4/b5,b5/b6,b6/b7,b7/b8,b8/b9,b9/b10,b10/b11,b20/b21,
 a1/a2,a3/a2,a5/a6,a7/a8,a8/a9,a9/a10,a10/a11,a11/a12,a12/a13,a13/a14,a14/a15,a15/a16,a16/a17,a17/a18,a18/a19,a19/a20,a20/a21,b12/b13,b14/b15,b16/b17,b18/b19, a12/b12,b13/a13,b16/a16,b17/a17}
    \draw (\from) -- (\to);
 
   \draw [line width = 2pt] (a4) to   (b4) ;
    \draw [line width = 2pt] (a5) to   (b5) ;
    
    \draw [dashed] (a3) to  [out=40,in=140] (a8) ;
    \draw [dashed] (a4) to  [out=40,in=140] (a5) ;
      % \draw [dashed] (a8) to  [out=40,in=140] (a9) ;
  %  \draw [dashed] (a4) to  [out=40,in=140] (a7) ;
    
    \draw [dashed] (a6) to  [out=40,in=140] (a7) ;
        \draw [line width = 2pt] (a6) to   (b6) ;
        \draw [line width = 2pt] (a7) to   (b7) ;
  \draw [line width = 2pt, gray!70!white] (a5) to   (a4) ;
    \draw [line width = 2pt, gray!70!white] (a6) to   (a7) ;
   \draw [line width = 2pt, gray!70!white] (b5) to   (b4) ;
    \draw [line width = 2pt, gray!70!white] (b6) to   (b7) ;
 
 \draw [line width = 2pt, gray!70!white] (b12) to   (b13) ;
  \draw [line width = 2pt] (a12) to   (b12) ;
    \draw [line width = 2pt] (a13) to   (b13) ;
    
        \draw [line width = 2pt] (a16) to   (b16) ;
        \draw [line width = 2pt] (a17) to   (b17) ;
        
  \draw [line width = 2pt, gray!70!white] (a12) to   (a13) ;
   \draw [line width = 2pt, gray!70!white] (a16) to   (a17) ;
      \draw [line width = 2pt, gray!70!white] (b16) to   (b17) ;
  \draw [dashed] (b13) to  [out=320,in=220] (b14) ;
   \draw [dashed] (b15) to  [out=310,in=230,looseness=0.6] (b18) ;
      \draw [dashed] (b12) to  [out=310,in=230,looseness=0.6] (b19) ;
       \draw [dashed] (b11) to  [out=300,in=240,looseness=0.6] (b20) ;
         \draw [dashed] (b16) to  [out=320,in=220] (b17) ;
\end{tikzpicture} 
\end{center}   
\caption{The first figure shows a tree $T$ with a voltage assignment $\sigma$ and two vertices $u, v$, both of which have voltage $1$. The second figure shows parts of the fibers $u^{\sigma}$, $v^{\sigma}$.%, and the graph $T^{\sigma}$, with the dashed lines representing the undrawn parts of the graph. 
We may seek a $2$-factor $F$ each of whose components has an edge in a certain matching in $v^{\sigma}$ as follows. We find $2$-factors $F_u$ and $F_v$ in the two components of $T \setminus \{uv\}$, which are depicted using cycles in the second figure, with the dashed lines representing the undrawn parts of the graph. We then may remove a matching from each of $F_u$ and $F_v$, as well as from $v^{\sigma}$, and add edges between $u^{\sigma}$ and $v^{\sigma}$. The edges that we remove are shown in gray, and the edges that we add are shown in bold. We then obtain a Hamiltonian cycle on $T^{\sigma}$.}
    \label{fig:my_label_3}
\end{figure}

One
consequence of Theorem \ref{thmLeaf} is that while we cannot answer Question \ref{question:bigPrime} for every tree and every labelling of its edges, we can answer Question \ref{question:bigPrime} when labels are assigned to the loops of a tree uniformly at random from a finite subset $S$ of the positive integers.

\begin{theorem}
\label{thm:prob}
Let $S$ be a finite set of positive integers, and let $k = |S|$.
For each sufficiently large $n$, there exists a value $p_0 = p_0(n,S)$ for which the following holds.
Let $T$ be a
reflexive tree on $n$ vertices, and let $\phi$ assign a value from $S$ to each edge $e \in E(T)$ uniformly at random.
%Let each cut-edge of $T$ is assigned $0$. 
Then, with probability at least $1 - \exp(- \frac {1}{4k} \sqrt{n} )$, $T^{\sigma}$ is Hamiltonian.
\end{theorem}

\begin{proof}
As usual, we use Lemma \ref{lemZero} to consider the equivalent problem in which $\phi(e) = 0$ for every cut-edge $e \in E(T)$.
We show that with probability at least $1 - \exp(- \frac 1{4k} \sqrt n)$, $T$ contains some vertex $v$ such that $\phi(v) = \phi(u)$ for each $u \in N(v)$. Then, Theorem \ref{thmLeaf} will imply that $T^{\sigma}$ is Hamiltonian.

We consider two cases. First, suppose that $T$ has at least $\frac{1}{4}\sqrt{ n }$ leaves. As $n$ is 
sufficiently large, we assume that $T$ has no adjacent pair of leaves.
After assigning labels to each $u \in V(T)$ of degree at least $2$, the conditional probability that a given leaf $\ell$ receives the same label from $\phi$ as its neighbor is $\frac 1k$. Given that we assign labels to leaves independently, the probability that no leaf $\ell$ receives the same label as its neighbor is 
$(1-\frac 1k)^{\frac 14 \sqrt{n}} < 
\exp(- \frac{1}{4k} \sqrt{n})$.

Next, suppose that $T$ has at most  $\frac 14 \sqrt{ n}$ leaves. 
We write $T'$ for the tree obtained from $T$ by using an edge to replace each maximal path with internal vertices of degree $2$. 
In other words, $T'$ is obtained by ``smoothing out" the degree-$2$ vertices of $T$. As $T'$ has no vertex of degree $2$ and $\sum_{v \in V(T')} (\deg_{T'}(v) - 2) = -2$, it follows that the number of vertices of degree at least $3$ in $T'$ is fewer than the number of leaves of $T'$. As $T'$ has at most $\frac 14 \sqrt{k n}$ leaves, it follows that $E(T') < |V(T')| < \frac 12  \sqrt{ n}$. Hence, by the pigeonhole principle, some edge of $T'$ replaced a path of $T$ with at least 
$\frac{n-1 -  \frac 12 \sqrt{ n}}{ \frac 12 \sqrt{ n}} > 3  \left \lceil \frac 13 \sqrt { n } \right  \rceil  + 2$ vertices. 
Therefore, $T$ has a path $P$ with $3  \left \lceil \frac 13 \sqrt { n } \right  \rceil $ vertices such that $\deg_T(v) = 2$ for each $v \in V(P)$. We choose a set $S$ of $\lceil \frac 13 \sqrt n \rceil$ vertices $v \in V(P)$ so that the closed neighborhoods $N[v]$ for $v \in S$ partition $V(P)$. As the closed neighborhoods $N[v]$ are distinct for $v \in S$ and are all size $3$, the probability that no $v \in S$ receives the same label from $\phi$ as both of its neighbors is at most $(1- \frac 1k)^{\left \lceil \frac 13 \sqrt n \right  \rceil } < \exp(-\frac {1}{4k} \sqrt{n})$.

Thus, in both cases, it holds with probability at least $1 - \exp(-\frac{1}{4k} \sqrt n)$ that for some vertex $v \in V(T)$, $\phi(v) = \phi(u)$ for each $u \in N(v)$. Then, Theorem \ref{thmLeaf} implies that for some $p_0 = p_0(n,S)$, $T^{\sigma}$ is Hamiltonian for each prime $p \geq p_0$.
\end{proof}

\section{Covering graphs with large circumference}
\label{sec:circumference}
In the previous section, we show that when a reflexive tree 
with nonzero voltages
is lifted over a large prime cyclic group, the covering graph is Hamiltonian whenever a certain local condition on voltages is met somewhere in the tree. It is natural to ask, however, what can we say when these local conditions are not met? In this section, we show that when any reflexive tree is lifted over a large prime cyclic group, the covering graph has a cycle containing almost all of the vertices, regardless of the voltage assignments of the base graph. In fact, we will allow the order $n$ of our group not to be prime as long as all labels on $T$ are coprime to $n$.

%We define $\phi$ and $\sigma$ as in the previous section.

\begin{theorem}
\label{thm:almostAll}
Let $\Delta \geq 0$ be an integer, let $0 < \epsilon \leq \frac{1}{2}$, and let $n \geq \frac{5 \Delta}{\epsilon^2} $ be an integer. If $T$ is a reflexive tree of maximum degree at most $\Delta$, and if $\sigma: A(T) \rightarrow \mathbb Z_n$ is a mapping that assigns group elements coprime to $n$ to each loop of $T$, then there exists a cycle $C$ on $T^{\sigma}$ that contains at least $ ( 1  - \epsilon)|V(T^{\sigma})|$ vertices.
\end{theorem}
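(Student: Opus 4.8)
The plan is to exploit the fact that, because every loop voltage is coprime to $n$, each fiber $v^\sigma$ is a single $n$-cycle, so the disjoint union of all fibers is already a $2$-factor of $T^\sigma$ with exactly $|V(T)|$ components, and $|V(T^\sigma)| = n|V(T)|$. By Lemma \ref{lemZero} I may assume that every cut-edge of $T$ carries voltage $0$, so that each cut-edge $[u,v]$ contributes the identity perfect matching $\{[(u,g),(v,g)] : g \in \mathbb Z_n\}$ between the two corresponding fiber-cycles. Thus $T^\sigma$ is obtained from a tree-indexed family of $n$-cycles by joining adjacent cycles along identity matchings, and the whole problem reduces to merging these cycles into one long cycle while discarding only an $\epsilon$-fraction of the vertices.

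The technical heart is a \emph{merge lemma} for a single cut-edge: given two $n$-cycles $C_u, C_v$ with loop-steps $a = \sigma(u)$ and $b = \sigma(v)$ coprime to $n$, joined by the identity matching $M$, I want a single cycle in $C_u \cup C_v \cup M$ omitting only a small fraction of the $2n$ vertices. The obstruction is the \emph{shear} between the two cycles: a contiguous arc of group elements becomes contiguous in $C_u$ only after scaling by $a^{-1}$ and in $C_v$ only after scaling by $b^{-1}$, so when $a \not\equiv \pm b \pmod n$ the two cyclic orders disagree and one cannot splice all of $C_v$ into $C_u$ through a bounded number of rungs. To get around this I would use many rungs whose endpoints in $C_v$ are nearly equally spaced: choosing the rung indices in an arithmetic progression and invoking the three-distance (three-gap) theorem, the chosen rungs cut $C_v$ into arcs of at most three distinct lengths, each of size $O(n/k)$ when $k$ rungs are used. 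A \emph{staircase} that alternately traverses an arc of $C_u$ and an arc of $C_v$ between consecutive rungs then covers all but $O(n/k)$ vertices. The delicate point, and the main obstacle, is guaranteeing that this staircase closes into a \emph{single} cycle rather than several disjoint ones; this is an interval-exchange/connectivity condition governed by the arithmetic of $ab^{-1} \bmod n$, which I would control by reserving a few spare rungs to stitch any resulting sub-cycles together.

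With the merge lemma in hand, I would assemble the global cycle by rooting $T$ and processing it in depth-first order, repeatedly absorbing a child fiber into the growing cycle through the matching on its parent edge. The only coordination needed is that at a vertex $u$ of degree $d \le \Delta$, the rung sets used for its incident merges be pairwise disjoint while still being spread uniformly around $u^\sigma$ (so that the three-gap arcs stay short); I would therefore allocate to the merge across the $j$-th incident edge the indices lying in the residue class $j \bmod \Delta$, giving each merge $n/\Delta$ uniformly spread rungs with no reuse. This is the same local ``growing'' picture already used in Lemma \ref{lemma:twoFactor} and Observation \ref{obsDJ}, now run with a loss budget instead of exactly.

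Finally I would bound the total loss: summing the per-merge omissions over the $|V(T)|-1$ cut-edges and comparing with $|V(T^\sigma)| = n|V(T)|$, the relative loss is controlled by the arc length $O(\Delta/(\text{rungs per merge}))$ together with the number of sub-cycles stitched per merge. Choosing the parameters so that this quantity is at most $\epsilon$ is precisely where the hypothesis $n \ge 5\Delta/\epsilon^2$ (and the convenient normalization $\epsilon \le \tfrac12$) is consumed. I expect this loss bookkeeping to be routine once the merge lemma is proved, so the genuine difficulty is the single-cycle guarantee of the sheared merge together with the simultaneous disjointness of the rung sets at high-degree joints.
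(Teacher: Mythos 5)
Your global framework---fibers are single $n$-cycles because all loop voltages are coprime to $n$, cut-edges carry identity matchings after Lemma \ref{lemZero}, and the tree is processed edge by edge against an $\epsilon$ loss budget---is exactly the paper's frame. But your merge lemma, which is the entire engine, has a quantitative flaw beyond the connectivity gap you acknowledge. If the merged cycle uses $2m$ rungs, then its trace on each fiber consists of exactly $m$ arcs (each used rung-top has degree two, so it ends precisely one arc), leaving $m$ complementary gaps per fiber whose interiors are lost. With your rung placement---an arithmetic progression chosen via the three-distance theorem so that \emph{all} arcs have length $\Theta(n/m)$---the $m$ omitted gaps per fiber cost $\Theta(n)$ vertices in total, not $O(n/k)$: near-equal spacing makes every gap short but forces you to discard half of each fiber, no matter how many rungs you use. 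To keep the loss small you need \emph{few} gaps, i.e.\ essentially $m=1$, two rungs, with the two rung positions close in \emph{both} fibers simultaneously---and that is impossible in general, since positions at distance $d$ along the parent cycle sit at distance $k$ along the child cycle where $k\sigma(\ell) \equiv \pm d \pmod n$, and for a generic shear no pair $(d,k)$ is small in both coordinates.

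The missing idea is the paper's Lemma \ref{lem:sqrtn}, a pigeonhole (Dirichlet-approximation) statement in $\mathbb Z_n$: for any generator $g$ and any $m \leq n$ there is $1 \leq k \leq m$ with $kg$ at distance at most $\lfloor n/m \rfloor$ from $0$. This lets the paper balance the two losses rather than avoid them: a single two-rung merge deletes a subpath of $d-1 \leq \lceil\sqrt{\omega}\rceil$ vertices from the parent fiber and traverses the child fiber the \emph{long} way around, skipping only $k-1 \leq n/\lceil\sqrt{\omega}\rceil$ child vertices, for a per-merge loss of order $n/\sqrt{\omega}$. This choice also dissolves your other two difficulties at a stroke: a two-rung splice trivially preserves a single cycle, so no interval-exchange connectivity analysis or ``spare rungs'' are needed; and instead of allocating residue classes of rungs at high-degree joints, the paper's induction (Theorem \ref{thm:ugly}) maintains the invariant that the cycle meets each fiber in a single path, removes each deleted subpath at an endpoint of that path, and charges the losses to $\deg^+ \cdot n/\lceil\sqrt{\omega}\rceil + \deg^- \cdot \lceil\sqrt{\omega}\rceil$ under an orientation of $T$ with out-degree at most one. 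Without Lemma \ref{lem:sqrtn} (or an equivalent clustering of rung pairs), your construction cannot reach the claimed $O(n/k)$ loss, so the proposal as written does not prove the theorem.
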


Rather than proving Theorem \ref{thm:almostAll} directly, we instead prove the following stronger but more opaque theorem.

\begin{theorem}
\label{thm:ugly}
Let $n$ and $\omega$ be positive integers satisfying $n \left ( 1 - \frac{1}{ \lceil \sqrt{\omega} \rceil } - \frac{\lceil \sqrt{\omega} \rceil }{\omega} \right ) \geq \lceil \sqrt{\omega} \rceil$. If $T$ is a reflexive tree of maximum degree at most $\frac{n}{\omega}$, and if $\sigma: A(T) \rightarrow \mathbb Z_n$ is a mapping that assigns group elements coprime to $n$ to each loop of $T$, then there exists a cycle $C$ on $T^{\sigma}$ that contains at least $\left ( 1 - \frac{1}{ \lceil \sqrt{\omega} \rceil } - \frac{\lceil \sqrt{\omega} \rceil }{\omega} \right )|V(T^{\sigma})|$ vertices.
\end{theorem}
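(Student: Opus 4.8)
The plan is to exhibit an explicit long cycle in $T^\sigma$ by running the extended billiard strategy of Lemma \ref{lemma:billiardPaths}, with the number of tracks tuned to $\lceil\sqrt\omega\rceil$. First I would normalize the picture: by Lemma \ref{lemZero} we may assume every cut-edge of $T$ carries voltage $0$, so that $T^\sigma$ acquires a clean ``columnar'' structure. Because each loop voltage is coprime to $n$, every fiber $v^\sigma$ is a single $n$-cycle with step $\sigma(v)$, while for each tree-edge $uv$ the lift contains the ``same-index'' matching joining $(u,i)$ to $(v,i)$. Writing $k = \lceil\sqrt\omega\rceil$ and $m = |V(T)|$, the target is a cycle missing at most a $(\tfrac1k + \tfrac k\omega)$-fraction of the $mn$ vertices.

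The construction I would use processes $T$ along a path decomposition (equivalently, a depth-first/Euler traversal) and runs the extended billiard strategy with $d$ tracks, for an even integer $d$ close to $k$, on each path. By Corollary \ref{corInclude}, noting that $\mc(\Gamma)=1$ here since all relevant gcd's equal $1$, these tracks already fill every internal fiber completely, so the only uncovered vertices come from two sources: (i) the turnarounds of the tracks at the extreme fibers and in closing them into a single cycle, and (ii) the branch vertices, where the $d$ tracks must be split among the incident subtrees and then recombined. I would argue that (i) costs at most one track's worth of residues, roughly $n/d$ per fiber, handled by the cap/long-path device of Theorem \ref{thm:base_case}; and that (ii) reserves a bounded number of residues per incident tree-edge for rerouting, at most about $d$ per edge, hence at most $d\cdot\deg_T(v)\le d\Delta \le d\cdot\tfrac n\omega$ per fiber. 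Summed over all fibers, these contribute loss fractions at most $1/d$ and $d/\omega$, and choosing $d\approx k=\lceil\sqrt\omega\rceil$ balances them to give the claimed bound $1-\tfrac1k-\tfrac k\omega$. The hypothesis $n\bigl(1-\tfrac1k-\tfrac k\omega\bigr)\ge k$ is precisely what guarantees that each fiber has enough room to carry out the turnarounds and reservations, i.e.\ that the estimate is non-vacuous.

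The main obstacle, and the step I would treat most carefully, is the interaction at branch vertices together with the requirement that the final object be a single cycle rather than a $2$-factor with many components. Away from branches the tracks close up routinely, but at a branch vertex the permutation induced on the $d$ tracks as they pass from one subtree's fiber to the next need not be a single cycle, so a local argument is needed to reroute the tracks through a small reserved block of residues in such a way that, globally, all tracks splice into one closed cycle. I expect this to reduce to a parity/rotation bookkeeping on the track-permutations at the joints, analogous to the odd-shifting analysis of Section \ref{sec:tools}, where the freedom to sacrifice up to $\tfrac nk$ residues per fiber supplies exactly the slack needed to force connectivity. Verifying that the discarded residues never exceed the stated budget, uniformly over all joints, is the crux of the quantitative estimate.
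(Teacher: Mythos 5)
Your proposal has a genuine gap, and it sits exactly where you place it yourself: forcing the billiard tracks into a \emph{single} cycle. The machinery of Section \ref{sec:tools} accomplishes this only under strong extra hypotheses --- an odd shifting decomposition whose paths share a common endpoint voltage coprime to $n$, plus the properly-weighted condition --- and for an arbitrary assignment of coprime voltages none of this is available: the permutations $\pi(v_t)$ accumulated along a path are uncontrolled, so after closing the $d$ tracks at the extreme fibers you obtain a $2$-factor that may have many components, and discarding residues does not by itself merge components (merging requires specific local structure, which is precisely what Lemma \ref{lemma:twoFactor} and the proofs of Theorems \ref{thmLeaf} and \ref{thm1} engineer, again only under special voltage conditions). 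The ``parity/rotation bookkeeping'' you defer is not a routine verification but the whole difficulty, and there is no evident way the $n/k$ per-fiber slack resolves it. There is also a quantitative problem at the end fibers of each path: the track ends sit at voltages $i, i+r, \dots, i+(d-1)r$, but their cyclic order around the fiber $v^{\sigma}$ is governed by $r\sigma(v)^{-1} \bmod n$, so the gaps between consecutive ends can be long; the closure must keep roughly alternating gaps, and in the worst case the discarded gaps total on the order of $n/2$ vertices in a single fiber, far exceeding your $n/\lceil\sqrt{\omega}\rceil$ budget. Nothing in the proposal shows that the free parameters $r$ and $l$ can be chosen to avoid this simultaneously at both ends of every path in the decomposition.

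The paper's actual proof avoids all of this with a much more elementary induction that never creates a disconnected intermediate object. It peels off a leaf $\ell$ with parent $v$ and uses a pigeonhole fact (Lemma \ref{lem:sqrtn}): some multiple $k\sigma(\ell)$ with $1 \leq k \leq n/\lceil\sqrt{\omega}\rceil$ lies within distance $\lceil\sqrt{\omega}\rceil$ of $0$ in $\mathbb{Z}_n$. This lets one splice the leaf fiber, minus a short arc of $k$ vertices, into the existing cycle through a detour of length $d \leq \lceil\sqrt{\omega}\rceil$ in $v^{\sigma}$, while maintaining the invariant that the cycle meets each fiber in a path and misses at most $\deg^+(w)\,n/\lceil\sqrt{\omega}\rceil + \deg^-(w)\,\lceil\sqrt{\omega}\rceil$ edges of each fiber $w^{\sigma}$ (with cut-edges oriented so each vertex has out-degree at most one). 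The single-cycle property is preserved trivially at every step. It is notable that your loss accounting mirrors the paper's exactly --- your item (i) is the $n/\lceil\sqrt{\omega}\rceil$ out-edge term and your item (ii) is the $\lceil\sqrt{\omega}\rceil \cdot \deg$ in-edge term --- but the mechanism you propose to realize that accounting is not established, and Lemma \ref{lem:sqrtn} is the key idea your proposal is missing.
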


\begin{claim}
Theorem \ref{thm:ugly} implies Theorem \ref{thm:almostAll}.
\end{claim}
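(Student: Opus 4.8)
The plan is to derive Theorem~\ref{thm:almostAll} from Theorem~\ref{thm:ugly} by feeding in a single well-tuned value of the parameter $\omega$. Given $\Delta$, $\epsilon$, and $n$ as in Theorem~\ref{thm:almostAll}, I would first dispose of the degenerate case $\Delta = 0$ (a reflexive tree of maximum degree $0$ has at most one vertex, so $T^\sigma$ is a single cycle covering all of its vertices) and then assume $\Delta \ge 1$. The choice that uses as much of the degree budget as possible is $\omega = \lfloor n/\Delta \rfloor$. With this choice the degree hypothesis of Theorem~\ref{thm:ugly} is immediate: since $\lfloor n/\Delta\rfloor \le n/\Delta$, we have $n/\omega = n/\lfloor n/\Delta\rfloor \ge \Delta$, so any tree of maximum degree at most $\Delta$ has maximum degree at most $n/\omega$.

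Writing $f(\omega) = \frac{1}{\lceil\sqrt\omega\rceil} + \frac{\lceil\sqrt\omega\rceil}{\omega}$ for the deficiency appearing in Theorem~\ref{thm:ugly}, it then remains to verify two things: the side condition $n\bigl(1 - f(\omega)\bigr) \ge \lceil\sqrt\omega\rceil$ needed to apply the theorem, and the bound $f(\omega) \le \epsilon$, which makes the guaranteed cycle cover at least $\bigl(1 - f(\omega)\bigr)|V(T^\sigma)| \ge (1-\epsilon)|V(T^\sigma)|$ vertices. The side condition is the easy half: once $f(\omega) \le \epsilon \le \tfrac12$ is established we have $1 - f(\omega) \ge \tfrac12$, while $\lceil\sqrt\omega\rceil \le \sqrt\omega + 1 \le \sqrt n + 1$, so it suffices that $n/2 \ge \sqrt n + 1$; and the hypothesis $n \ge 5\Delta/\epsilon^2$ together with $\Delta \ge 1$ and $\epsilon \le \tfrac12$ already forces $n \ge 20$, where this holds with room to spare.

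The crux, and the step I expect to require the most care, is the inequality $f(\omega) \le \epsilon$ for $\omega = \lfloor n/\Delta\rfloor \ge \lfloor 5/\epsilon^2\rfloor$. A first attempt bounds $\lceil\sqrt\omega\rceil \le \sqrt\omega + 1$ to get $f(\omega) \le \frac{2}{\sqrt\omega} + \frac{1}{\omega}$ and then uses $\omega > 5/\epsilon^2 - 1$; however, this crude estimate falls just short at the extreme $\epsilon = \tfrac12$ (it yields roughly $1.02\,\epsilon$), precisely because replacing $\lceil\sqrt\omega\rceil$ by $\sqrt\omega + 1$ is too lossy when $\omega$ is near a perfect square. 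The fix is to keep $\lceil\sqrt\omega\rceil$ exact: writing $k = \lceil\sqrt\omega\rceil$, one has $(k-1)^2 < \omega \le k^2$, and on each plateau $\{\omega : \lceil\sqrt\omega\rceil = k\}$ the function $f(\omega) = \frac1k + \frac{k}{\omega}$ is decreasing, with plateau-maximum $\frac1k + \frac{k}{(k-1)^2+1}$, these maxima themselves decreasing in $k$. Hence the supremum of $f$ over $\omega \ge \lfloor 5/\epsilon^2\rfloor$ is attained either at $\omega = \lfloor 5/\epsilon^2\rfloor$ or at the start of the next plateau, and a short estimate organized by the size of $k$ (with the slack growing as $\epsilon \to 0$) confirms $f(\omega) \le \epsilon$ throughout the range.

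I regard pinning down this rounding interaction as the main, if elementary, difficulty: the argument must use the exact value of $\lceil\sqrt\omega\rceil$ rather than a smooth surrogate, and one should check that the constant $5$, as opposed to a smaller constant such as $4$, is exactly what absorbs the additive rounding error and closes the estimate at $\epsilon$. Combining the degree hypothesis, the side condition, and $f(\omega) \le \epsilon$, Theorem~\ref{thm:ugly} produces a cycle on at least $(1-\epsilon)|V(T^\sigma)|$ vertices, which is precisely the conclusion of Theorem~\ref{thm:almostAll}.
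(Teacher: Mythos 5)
Your proposal is correct, but it makes the dual choice of $\omega$ to the paper's. The paper takes $\omega$ \emph{minimal} such that $f(\omega) := \frac{1}{\lceil\sqrt{\omega}\rceil} + \frac{\lceil\sqrt{\omega}\rceil}{\omega} < \epsilon$, so the error bound holds by construction, and then both hypotheses of Theorem \ref{thm:ugly} --- the side condition $n(1-f(\omega)) \geq \lceil\sqrt{\omega}\rceil$ and the degree bound $\Delta \leq \frac{n}{\omega}$ --- are verified from $n \geq \frac{5\Delta}{\epsilon^2}$ together with the near-tightness $\epsilon \leq f(\omega-1)$, via the observation that the relevant expressions tend to $4 < 5$ as $\omega \to \infty$, plus a finite check of small cases. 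You instead take $\omega = \lfloor n/\Delta \rfloor$, essentially maximal subject to the degree hypothesis, which makes $\Delta \leq \frac{n}{\omega}$ immediate and concentrates all of the difficulty in the single inequality $\sup\{f(\omega) : \omega \geq \lfloor 5/\epsilon^2 \rfloor\} \leq \epsilon$. Your plateau analysis is the right machinery for that inequality: $f$ decreases on each set $\{\omega : \lceil\sqrt{\omega}\rceil = k\}$, the plateau maxima $\frac{1}{k} + \frac{k}{(k-1)^2+1}$ decrease in $k$, so the supremum is attained either at $\lfloor 5/\epsilon^2 \rfloor$ or at the next plateau start, and the boundary cases pass for the same structural reason the paper's limits come out to $4 < 5$, namely $2/\sqrt{5} < 1$; your diagnosis that the constant $4$ would fail is accurate (at $\omega = k^2$ one has $f(k^2) = \frac{2}{k} > \frac{2}{\sqrt{k^2+1}}$, so an $\epsilon$ just above $2/\sqrt{k^2+1}$ breaks the estimate, whereas with $5$ the needed inequality $\frac{2}{k} \leq \sqrt{5/(k^2+1)}$ reduces to $k \geq 2$). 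The one step you assert rather than carry out --- the final check at the plateau boundaries --- is true and is no less explicit than the paper's own ``it is easy to check for small values of $\omega$.'' In sum, both routes hinge on the same slack in the constant $5$; yours localizes the work into one clean extremal problem about $f$ while trivializing the degree hypothesis, whereas the paper's minimal-$\omega$ choice trivializes the error bound and spreads the verification across two softer asymptotic estimates.
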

\begin{proof}
We assume that Theorem \ref{thm:ugly} holds. If $\Delta = 0$ in Theorem \ref{thm:almostAll} then the theorem is trivial, so we assume that $\Delta \geq 1$.
We show that the assumptions of Theorem \ref{thm:almostAll} satisfy the assumptions of Theorem \ref{thm:ugly}, and we also show that the conclusion of Theorem \ref{thm:ugly} implies the conclusion of Theorem \ref{thm:almostAll}.

Let $n$, $\Delta$, and $\epsilon$ be chosen as in Theorem \ref{thm:almostAll}.
We choose $\omega$ to be the smallest integer so that $\frac{1}{ \lceil \sqrt{\omega} \rceil } + \frac{\lceil \sqrt{\omega} \rceil}{\omega} < \epsilon$, and we observe that $\epsilon \leq \frac{1}{ \lceil \sqrt{\omega-1} \rceil } + \frac{\lceil \sqrt{\omega-1} \rceil}{\omega-1}$. We must show that if $n \geq \frac{5 \Delta}{\epsilon^2}$ holds, then so do both inequalities in the stronger statement. We note that since $\frac{1}{ \lceil \sqrt{\omega} \rceil } + \frac{\lceil \sqrt{\omega} \rceil}{\omega} \geq \frac{1}{2}$ for $1 \leq \omega \leq 16$, it holds that $\omega \geq 17$. 

First, we would like to show that $n \left ( 1 - \frac{1}{ \lceil \sqrt{\omega} \rceil } - \frac{\lceil \sqrt{\omega} \rceil }{\omega} \right ) \geq \lceil \sqrt{\omega} \rceil$. Since $n \geq \frac{5}{\epsilon^2}$, and since $\epsilon \leq \frac{1}{ \lceil \sqrt{\omega-1} \rceil } + \frac{\lceil \sqrt{\omega-1} \rceil}{\omega-1}$, it is enough to show that 
\[\frac{5}{ \left (\frac{1}{ \lceil \sqrt{\omega-1} \rceil } + \frac{\lceil \sqrt{\omega-1} \rceil}{\omega-1} \right )^2} \geq \frac{\lceil \omega \rceil }{1 - \frac{1}{ \lceil \sqrt{\omega} \rceil } - \frac{\lceil \sqrt{\omega} \rceil }{\omega}} \]
holds for $\omega \geq 17$.
However, we have 
\[\frac{\lceil \omega \rceil }{1 - \frac{1}{ \lceil \sqrt{\omega} \rceil } - \frac{\lceil \sqrt{\omega} \rceil }{\omega}}  \left ( \frac{1}{ \lceil \sqrt{\omega-1} \rceil } + \frac{\lceil \sqrt{\omega-1} \rceil}{\omega-1} \right ) ^2 \rightarrow 4\]
as $\omega$ increases, and it is easy to check for small values of $\omega$ that the inequality holds. Therefore, it holds that $n \left ( 1 - \frac{1}{ \lceil \sqrt{\omega} \rceil } - \frac{\lceil \sqrt{\omega} \rceil }{\omega} \right ) \geq \lceil \sqrt{\omega} \rceil$.

Second, we would like to show that $\Delta \leq \frac{n}{\omega}$ holds. Since $n \geq \frac{5 \Delta}{\epsilon^2}$, and since $\epsilon \leq \frac{1}{ \lceil \sqrt{\omega-1} \rceil } + \frac{\lceil \sqrt{\omega-1} \rceil}{\omega-1}$, it is sufficient to show that 
\[5 \geq \omega \left ( \frac{1}{ \lceil \sqrt{\omega-1} \rceil } + \frac{\lceil \sqrt{\omega-1} \rceil}{\omega-1} \right )^2.\]
However, the right hand side expression has a limit of $4$ as $\omega$ increases, and it is easy to check the inequality for small $\omega$. Therefore, it holds that $\Delta \leq \frac{n}{\omega}$. Hence, we have shown that the hypotheses of Theorem \ref{thm:ugly} hold, so by assumption, the conclusions of Theorem \ref{thm:ugly} hold as well.

Now, since $\frac{1}{ \lceil \sqrt{\omega} \rceil } + \frac{\lceil \sqrt{\omega} \rceil}{\omega} < \epsilon$, it is clear that the conclusion of Theorem \ref{thm:ugly} implies that the conclusion of Theorem \ref{thm:almostAll} also holds. This completes the proof of the claim.
\end{proof}

For the proof of Theorem \ref{thm:ugly}, we need the following lemma. For an integer $n \geq 1$ and a pair $g,h \in \mathbb Z_n$, we say that the \emph{distance} between $g$ and $h$ is the minimum number of terms $1$ or $-1$ that must be added to $g$ to obtain $h$.
\begin{lemma}
\label{lem:sqrtn}
Let $1 \leq m \leq n$ be integers. For each generator $g \in \mathbb Z_n$, there exists an integer $1 \leq k \leq m$ and an element $h \in \mathbb Z_n$ at a distance of at most $\lfloor \frac{n}{m} \rfloor$ from $0$
for which $kg = h$.
\end{lemma}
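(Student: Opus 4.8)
The plan is to use a pigeonhole (Dirichlet-type) argument applied to the first $m+1$ multiples of $g$ around the cyclic group $\mathbb{Z}_n$. Concretely, I would consider the $m+1$ elements $0\cdot g,\, 1\cdot g,\, 2\cdot g,\, \dots,\, m\cdot g$, each viewed as a residue in $\{0,1,\dots,n-1\}$, and study how they are spaced around the cycle. The goal is to produce two of these multiples that are close together; their difference is then a small multiple $kg$ lying near $0$, which is exactly what the lemma asks for.

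First I would dispose of a degenerate case. If two of the points $ig$ and $jg$ (with $0\le i<j\le m$) coincide, then $kg=0$ for $k=j-i\in[1,m]$, and $h=0$ is at distance $0$ from $0$, so the conclusion holds trivially. Since $g$ is a generator we have $\gcd(g,n)=1$, so in fact such a coincidence can only occur in the extreme case $m=n$; but it is cleanest to handle it separately rather than to invoke this observation.

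In the main case the $m+1$ points are distinct. Sorting them as residues produces $m+1$ cyclic gaps between consecutive points, and these gaps are positive integers summing to $n$. Hence the smallest gap $d$ satisfies $d\le \lfloor n/(m+1)\rfloor \le \lfloor n/m\rfloor$. Let this minimal gap be realized by two consecutive points $ig$ and $jg$. Because $d\le n/2$, the circular distance between these two residues is exactly $d$, so $h:=(j-i)g$ is a residue at distance $d\le \lfloor n/m\rfloor$ from $0$. Setting $k=|j-i|$, we have $1\le k\le m$ and $kg$ equals $h$ or $-h$; since distance is symmetric under negation, $kg$ lies within distance $\lfloor n/m\rfloor$ of $0$, which is precisely the assertion of the lemma.

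The argument is essentially a one-line pigeonhole, so I do not expect a serious obstacle; the only points requiring care are bookkeeping ones. These are: (i) translating the ``minimum gap'' into a bound of the prescribed form $\lfloor n/m\rfloor$, which uses that there are $m+1$ gaps (not $m$) together with the monotonicity of $\lfloor n/t\rfloor$ in $t$; and (ii) matching the paper's definition of distance, namely verifying that the short cyclic gap indeed equals the number of $\pm 1$ steps needed, and confirming that the index difference $k=|j-i|$ genuinely lands in the range $[1,m]$. Once these details are checked, the proof is complete.
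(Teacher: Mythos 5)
Your proof is correct and is essentially the paper's argument: both are the classical Dirichlet pigeonhole applied to the first multiples of $g$, concluding that two of them lie within cyclic distance $\lfloor n/m\rfloor$ of each other and taking $k=|k_2-k_1|$ (the paper packages the pigeonhole as overlapping intervals $R_a$ of length $\lfloor n/m\rfloor+1$ attached to the $m$ points $g,\dots,mg$, while you sort the $m+1$ points $0,g,\dots,mg$ and bound the minimal gap, which even yields the marginally stronger bound $\lfloor n/(m+1)\rfloor$). Your separate handling of the degenerate coincidence case $m=n$ is careful bookkeeping that the paper glosses over, but it does not change the substance of the argument.
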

\begin{proof}
Consider the set $K = \{k g:1 \leq k \leq m\} \subseteq \mathbb Z_n$. 
For each element $a \in K$, we define the set $R_a \subseteq \mathbb Z_n$ as the set $\{a, a+1, \dots, a+ \lfloor \frac{n}{m} \rfloor \}$. For two distinct elements $a,b \in K$, if $R_a \cap R_b \neq \emptyset$, then $a$ and $b$ are at a distance of at most $\lfloor \frac{n}{m} \rfloor$. Now, since $g$ is a generator of $\mathbb Z_n$, all elements of $K$ are distinct. Therefore, since $m (\lfloor \frac{n}{m} \rfloor +1) > n$, there exist two elements $k_1g, k_2g \in K$ for which $R_{k_1g}$ and $R_{k_2 g}$ intersect. Without loss of generality, we assume that $k_2 g \in R_{k_1 g}$, which implies that modulo $n$, $k_2 g - k_1 g$ is at most $\lfloor \frac{n}{m} \rfloor$. However, this implies that $|k_2 - k_1| g$ is at a distance of at most $\lfloor \frac{n}{m} \rfloor$ from $0$, so letting $k = |k_2 - k_1|$ and letting $h = |k_2 - k_1|g$ gives us our result.
\end{proof}

Now, we are ready to prove Theorem \ref{thm:ugly}, which implies Theorem \ref{thm:almostAll}.

\begin{proof}[Proof of Theorem \ref{thm:ugly}]
By Lemma \ref{lemZero}, we may assume that every cut-edge $e \in E(T)$ satisfies $\phi(e) = 0$.
We give an orientation to the cut-edges of $T$ so that each vertex of $T$ has out-degree at most $1$.
We prove the stronger statement that we may choose $C$ so that $C$ contains all but at most $\deg^+(v) \frac{n}{\lceil  \sqrt{\omega} \rceil}  + \deg^-(v)  \lceil \sqrt{\omega} \rceil $ edges from each fiber $v^{\sigma}$ in $T^{\sigma}$ and such that $E(C) \cap E(v^{\sigma})$ forms a path.

We prove the statement by induction on $|V(T)|$. When $|V(T)| = 1$, $T^{\sigma}$ is a single cycle, so the statement holds. Now, suppose $|V(T)| > 1$.
Let $\ell$ be a leaf of $T$ with out-degree $1$ and a neighbor $v \in V(T)$. We assume by the induction hypothesis that $(T - \ell)^{\sigma}$ contains a cycle $C'$ that satisfies our stronger condition, and in particular, that contains all but at most $\deg^+(v) \frac{n} {\lceil  \sqrt{\omega} \rceil}  + ( \deg^-(v)  -1 )   \lceil \sqrt{\omega} \rceil $ edges of $v^{\sigma}$ and such that $E(C') \cap E(v^{\sigma})$ is a path.

Since $\sigma(v)$ is coprime to $n$, we may assume by relabelling our group elements that $\sigma(v) = 1$.
Now, we extend $C'$ to $\ell^{\sigma}$ as follows. Using Lemma \ref{lem:sqrtn}, we choose an integer $1 \leq k \leq \frac{n}{\lceil \sqrt{\omega}\rceil }$ and an element $h \in \mathbb Z_n$ at a distance $d \leq \lceil \sqrt{\omega} \rceil$ from $0$ for which $k  \sigma(\ell) = h$. Now, since $C'$ intersects $v^{\sigma}$ in at least 
\[n -  \frac{n}{\lceil  \sqrt{\omega} \rceil} - \frac{n}{\omega} \cdot \lceil  \sqrt{\omega} \rceil \geq \lceil \sqrt{\omega} \rceil \] edges, and since these edges form a path in $v^{\sigma}$, we may find some path $P \subseteq v^{\sigma} \cap C'$ of length at least $ \lceil \sqrt{\omega} \rceil $, and by applying an automorphism to $T^{\sigma}$, we may assume that $P$ is of the form $(v_0, v_1, \dots, v_{|E(P)|})$ and that $v_0$ is an endpoint of the path $E(C') \cap E(v^{\sigma})$. We remove from $C'$ all edges on the subpath $(v_0, v_1, \dots, v_d)$, and we add to $C'$ the edges $[v_0, \ell_0]$ and $[v_d, \ell_d]$. Now, since $d \in \{h, -h\}$ modulo $n$, and since $k \sigma(\ell) = h$, there exists a path of length $k$ from $\ell_0$ to $\ell_d$ in $\ell^{\sigma}$. Then, since $\ell^{\sigma}$ is a cycle, there also exists a path $P'$ of length $n - k$ in $\ell^{\sigma}$ from $\ell_0$ to $\ell_d$. We add this path $P'$ to $C'$, which gives us our final cycle $C$.

Finally, we check that $C$ satisfies all conditions of the induction hypothesis. First, we note that $C$ contains all but $k \leq \frac{n}{\lceil \sqrt{\omega}\rceil }$ edges of $\ell^{\sigma}$. Additionally, when $T - \ell$ was extended to $T$ and $C'$ was extended to $C$, the in-degree of $v$ increased by one, and $C$ lost $d \leq \lceil \sqrt{\omega}\rceil$ edges from $v^{\sigma}$ compared to $C'$.  Therefore, $C$ contains all but at most $  \frac{n}{\lceil \sqrt{\omega} \rceil} \deg^+(w) + \lceil \sqrt{\omega}\rceil \deg^-(w)$ edges from each fiber $w^{\sigma}$ in $T^{\sigma}$. Finally, $E(C) \cap E(\ell^{\sigma})$ is a path, as is $E(v^{\sigma}) \cap E(C)$,
since $E(v^{\sigma}) \cap E(C)$ was obtained from the path $E(v^{\sigma}) \cap E(C')$ by removing a subpath containing an endpoint.
Therefore, the induction hypothesis holds for $T$ and $C$, and the proof is complete.
\end{proof}

\section{Acknowledgment}
We are grateful to an anonymous referee for carefully reading the manuscript, and in particular, for pointing out mistakes in previous versions of Lemma \ref{lemma:billiardPaths} and Theorem \ref{thm:prob}.

\raggedright
\bibliographystyle{plain}
\bibliography{bib}

\begin{thebibliography}{10}

\bibitem{babai1979long}
L{\'a}szl{\'o} Babai.
\newblock Long cycles in vertex-transitive graphs.
\newblock {\em Journal of Graph Theory}, 3(3):301--304, 1979.

\bibitem{babai1979problem}
L{\'a}szl{\'o} Babai.
\newblock Problem 17.
\newblock In {\em Unsolved Problems, Summer Research Workshop in Algebraic
  Combinatorics, Simon Fraser University}, 1979.

\bibitem{Batagelj}
Vladimir Batagelj and Toma\v{z} Pisanski.
\newblock Hamiltonian cycles in the {C}artesian product of a tree and a cycle.
\newblock {\em Discrete Math.}, 38(2-3):311--312, 1982.

\bibitem{Exoo}
Geoffrey Exoo and Robert Jajcay.
\newblock On the girth of voltage graph lifts.
\newblock {\em European J. Combin.}, 32(4):554--562, 2011.

\bibitem{Gross}
Jonathan~L. Gross.
\newblock Voltage graphs.
\newblock {\em Discrete Math.}, 9:239--246, 1974.

\bibitem{HellVoltage}
Pavol Hell, Hiroshi Nishiyama, and Ladislav Stacho.
\newblock Hamiltonian cycles in covering graphs of trees.
\newblock {\em Discrete Appl. Math.}, 282:271--281, 2020.

\bibitem{Keating}
Kevin Keating and David Witte.
\newblock On {H}amilton cycles in {C}ayley graphs in groups with cyclic
  commutator subgroup.
\newblock In {\em Cycles in graphs ({B}urnaby, {B}.{C}., 1982)}, volume 115 of
  {\em North-Holland Math. Stud.}, pages 89--102. North-Holland, Amsterdam,
  1985.

\bibitem{kutnar2009hamilton}
Klavdija Kutnar and Dragan Maru{\v{s}}i{\v{c}}.
\newblock Hamilton cycles and paths in vertex-transitive graphs—current
  directions.
\newblock {\em Discrete mathematics}, 309(17):5491--5500, 2009.

\bibitem{Lovasz1969combinatorial}
L{\'a}szl{\'o} Lov{\'a}sz.
\newblock Combinatorial structures and their applications.
\newblock In {\em Proc. Calgary Internat. Conf., Calgary, Alberta}, volume
  1970, pages 243--246, 1969.

\bibitem{Thomason}
Andrew Thomason.
\newblock Cubic graphs with three {H}amiltonian cycles are not always uniquely
  edge colorable.
\newblock {\em J. Graph Theory}, 6(2):219--221, 1982.

\bibitem{Witte}
Dave Witte.
\newblock Cayley digraphs of prime-power order are {H}amiltonian.
\newblock {\em J. Combin. Theory Ser. B}, 40(1):107--112, 1986.

\end{thebibliography}

\end{document}